\definecolor{uuuuuu}{rgb}{0.27,0.27,0.27}
\definecolor{sqsqsq}{rgb}{0.1255,0.1255,0.1255}
\newtheorem{definition}{Definition} 
\newtheorem{theorem}[definition]{Theorem}
\newtheorem{lemma}[definition]{Lemma}
\newcommand{\km}{K_4^-}
\newcommand{\dm}[1]{\textcolor{blue}{\textbf{[DM: } #1\textbf{]}}}
\title{$\km$-free triple systems without large stars in the complement}
\author{
Dhruv Mubayi\thanks{Department of Mathematics, Statistics and Computer Science, University of Illinois, Chicago, IL 60607. Email: mubayi@uic.edu. Research partially supported by NSF Awards DMS-1952786 and DMS-2153576.} \and
Nicholas Spanier\thanks{Department of Mathematics Statistics and Computer Science, University of Illinois, Chicago, IL 60607.
			Email: spanier2@uic.edu. Research partially supported by NSF Awards DMS-1952786 and DMS-2153576.
			  }}
\date{\today}
\begin{document}

\maketitle

\begin{abstract}
    The $n$-star $S_n$ is the $n$-vertex triple system with ${n-1 \choose 2}$ edges all of which contain a fixed vertex, and $\km$ is the unique triple system with four vertices and three edges.  We prove that the Ramsey number $r(K_4^-, S_n)$ has order of magnitude $n^2 /\log n$.
    
    This  confirms a conjecture of Conlon, Fox, He, Suk, Verstra\"ete and the first author. It also generalizes the well-known bound of Kim for the graph Ramsey number $r(3,n)$, as the link of any vertex in a $K_4^-$-free triple system is a triangle-free graph. Our method builds on the approach of Guo and Warnke who adapted Kim's lower bound for $r(3,n)$ to the pseudorandom setting. 
\end{abstract}
\section{Introduction}
A $k$-uniform hypergraph $H$ (henceforth $k$-graph) is a pair $(V(H), E(H))$
 where $E(H)$ is a collection of $k$-element subsets of $V(H)$. We often associate $H$ with its edge set $E(H)$, when $V(H)$ is obvious from context. Write $K_n^k$ for the complete $k$-graph  on $n$ vertices. Given $k$-graphs $G$ and $H$, the Ramsey number $r(G, H)$ is the minimum $N$ such that every red/blue coloring of $E(K_N^k)$ has a monochromatic red copy of $G$ or a monochromatic blue copy of $H$; when $H = K_n^k$, we simply write $r(G, n)$. When $G$ is fixed and $n \rightarrow \infty$, we often refer to this class of parameters as off-diagonal Ramsey numbers. 

The off-diagonal graph Ramsey numbers $r(s,n)=r(K_s, K_n)$ have been intensively studied for many decades.
 Erd\H os conjectured that for fixed $s \ge 3$, we have $r(s,n) = n^{s-1+o(1)}$ and this was reiterated recently by the first author and Verstra\"ete~\cite{MV}. Only the first two cases, $s=3,4$ have been proved. The order of magnitude of $r(3,n)$ is $n^2/\log n$, where the upper bound was obtained by  Ajtai-Koml\'os-Szemer\'edi~\cite{AKS} and the lower bound was proved by Kim~\cite{Kim} and later by Bohman ~\cite{Bohman} using a different method. More recently, Mattheus and Verstra\"ete proved that $r(4,n) = n^{3-o(1)}$ with the order of magnitude still undetermined. In this paper we consider an analogous parameter to $r(3,n)$ in the hypergraph setting.

It is well-known that for each $s \ge 4$, the hypergraph Ramsey number $r(K_4^3, K_n^3) > 2^{cn}$ for some positive constant $c$. On the other hand, there is a well-known large class $\mathcal{C}$ of 3-graphs $G$ for which it is known that $r(G, n)$ is polynomial in $n$. Recently, it was conjectured~\cite{CFGHMSVY} that $r(G, n)$ is at most polynomial in $n$ if and only if $G \in \mathcal{C}$. On the other hand, it is not clear whether $G \not\in \mathcal{C}$ implies that $r(G, n)$ is exponential in $n$. In fact, the following result from~\cite{CFHMSV} showed an intermediate growth rate for the following related problem.  
The link graph of a vertex $v$ in a 3-graph $H$ is the graph whose vertex set is $V(H)\setminus \{v\}$ and edge set is $\{yz: vyz \in E(H)\}$. Write $S_n$ for the $n$-vertex star: this is the 3-graph with vertex set $[n]:=\{1, \ldots, n\}$ and edge set $\{e \in {[n] \choose 3}: 1 \in e\}$. Alternatively, $S_n$ is the 3-graph for which there is a vertex whose link graph is isomorphic to $K_{n-1}$.  The 3-graph $S_n$ is a natural way to generalize $K_n$ to hypergraphs.   It is proved in~\cite{CFHMSV} that there are positive constants $c, c'$ such that 
\begin{equation} \label{eqn:k4sn} 2^{c \log^2 n} < r(K_4^3, S_n) < 2^{c' n^{2/3} \log n}.\end{equation}
This shows that $r(K_4^3, S_n)$ is  already superpolynomial in $n$. Therefore, one needs to consider sparser 3-graphs $G$ to detect polynomial growth rates for $r(G, S_n)$. The smallest such nontrivial 3-graph $G$ is $\km$, the unique 3-graph with four vertices and three edges (if $G$ is the 3-graph on four vertices and two edges, then it is trivial to see that $r(G, S_n) = \Theta(n)$). 

The link graph $L$ of a vertex in an $N$-vertex $\km$-free 3-graph $H$ is a triangle-free graph  on $N-1$ vertices, and hence by the result $r(3,n) = O( n^2/\log n)$ of~\cite{AKS}, the independence number of $L$ is at least $\Omega(\sqrt{N \log N})$. Consequently, the complement of every $\km$-free 3-graph on $N$ vertices contains an $n$-vertex star with $n =\Omega(\sqrt{N \log N})$. This shows
$$r(\km, S_n)  = O\left(\frac{n^2}{\log n}\right).$$
In the other direction, it was proved in~\cite{CFHMSV} using the local lemma that 
$r(\km, S_n)  = \Omega(n^2/\log^2 n)$. 
This led the authors of~\cite{CFHMSV} to conjecture that 
\begin{equation} \label{eqn:conj}r(\km, S_n)  = \Theta\left(\frac{n^2}{\log n}\right). \end{equation}
A similar state of affairs existed for the graph case $r(3,n)$ prior to Kim's improved lower bound. Note, however, that (\ref{eqn:conj}) is substantially stronger than Kim's result $r(3,n) = \Omega(n^2/\log n)$ as it posits the existence of a triple system such that {\em every} link graph is an optimal (in order of magnitude)   $r(3,n)$ graph. Also, as indicated in (\ref{eqn:k4sn}), hypergraph problems sometimes display novel phenomena compared to what we typically witness for graphs, so it was by no means a forgone conclusion that (\ref{eqn:conj}) should hold similar to the graph result $r(3,n)=\Theta(n^2/\log n)$.
In this paper, we prove (\ref{eqn:conj}).

\begin{theorem}
     \label{mainSn}
There is an absolute constant $c$ such that for all sufficiently large $N$, there is a $\km$-free 3-graph on $N$ vertices whose complement contains no copy of $S_n$, where $n = c\sqrt{N\log N}$. In other words, $r(\km, S_n) = \Omega(n^2/\log n)$.
\end{theorem}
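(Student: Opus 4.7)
The first step is to reformulate the task. A 3-graph $H$ on $N$ vertices is $\km$-free if and only if every link graph $L_v$ is triangle-free, and the complement of $H$ contains no copy of $S_n$ if and only if every $L_v$ has independence number at most $n-2$. So the theorem reduces to producing a single 3-graph $H$ on $N$ vertices in which \emph{every} one of the $N$ link graphs is a triangle-free graph on $N-1$ vertices with independence number $O(\sqrt{N\log N})$, i.e., simultaneously achieving (up to constants) the order of magnitude in Kim's bound for $r(3,n)$ at every link.

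My plan is to construct $H$ via a random greedy $\km$-free process: starting with the empty 3-graph on $[N]$, at each step choose a uniformly random triple among those whose addition does not create a $\km$, and repeat until no legal moves remain. Adding a triple $\{a,b,c\}$ creates a $\km$ precisely when some fourth vertex $d$ already has at least two of $abd,\,acd,\,bcd$ as hyperedges, and a short case analysis shows that this occurs iff at least one of the three edge-insertions $bc \hookrightarrow L_a$, $ac \hookrightarrow L_b$, $ab \hookrightarrow L_c$ creates a triangle in the corresponding link. Thus the hypergraph process couples $N$ triangle-free graph processes (one per vertex), and by symmetry each link $L_v$ at the natural stopping time should resemble the output of Bohman's triangle-free process on $N-1$ vertices, with $\Theta(N^{3/2}\sqrt{\log N})$ edges and the associated pseudorandom structure.

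The bulk of the work is an analysis via the differential equation method. I would track the core statistics: the number of currently legal triples, the edge counts of each link $L_v$, the codegrees $|N_{L_v}(x) \cap N_{L_v}(y)|$, and carefully chosen ``extension counts'' that quantify proximity to forbidden $\km$-configurations. These quantities should follow deterministic trajectories matching, up to lower-order corrections, those governing the standard triangle-free process on $N-1$ vertices. Concentration via Freedman-type martingale inequalities, together with a union bound over the $N$ links, should yield that at the stopping time every $L_v$ is a triangle-free graph satisfying the quantitative pseudorandomness hypotheses required as input to the Guo--Warnke machinery. I would then invoke their adaptation of Kim's argument to the pseudorandom setting to conclude that each such $L_v$ has independence number at most $c\sqrt{N\log N}$, and a final union bound over $v$ extracts a single outcome of the process that achieves this bound at every link simultaneously, as required.

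The main obstacle I anticipate is controlling the correlations among the $N$ link processes. A single hyperedge insertion nudges three links at once, and the probability that a given edge of $L_v$ is added at a given step is biased relative to the stand-alone triangle-free process on $L_v$ by the global $\km$-legality constraint on the chosen triple. Showing that this bias washes out at the scale relevant for the Kim bound --- equivalently, that each link process stays close enough to a genuine triangle-free process that the Guo--Warnke pseudorandomness hypotheses continue to hold --- is the most delicate part of the argument, requiring joint moment estimates for degree-like variables across different links. I expect this to consume the bulk of the technical work; once pseudorandomness of each individual link is in hand, the Guo--Warnke input should apply in a modular fashion.
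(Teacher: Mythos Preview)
Your reformulation in the first paragraph is correct, but the plan that follows diverges from the paper in a way that runs into a real obstacle.

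First, the paper does \emph{not} use the random greedy $\km$-free process. It uses a semi-random nibble on the 3-graph: at each step a sparse random subset $\Gamma_{i+1}$ of the currently open triples is selected, a small cleanup removes those that create a $\km$, and the ``open'' set is updated. The authors explicitly remark that they were unable to push the random greedy $\km$-free process through, and leave that open. So your chosen engine is precisely the one the paper reports failing with.

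Second, and more fundamentally, your modular plan ``show each link $L_v$ is a pseudorandom triangle-free graph, then invoke Guo--Warnke to bound $\alpha(L_v)$'' misreads what Guo--Warnke provides. Their result takes a sufficiently pseudorandom \emph{host} graph and runs a nibble inside it to produce triangle-free subgraphs with small independence number; it is not a black box of the form ``pseudorandom triangle-free graph $\Rightarrow$ small independence number.'' Indeed, at the Kim density $p \asymp \sqrt{(\log N)/N}$ there is no known codegree/edge-distribution hypothesis on a triangle-free graph that by itself forces $\alpha = O(\sqrt{N\log N})$; establishing that bound is the whole content of Kim's theorem. So after showing the links are pseudorandom you would still have to prove the independence bound from scratch, and at that point the modular structure has bought nothing.

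What the paper does instead is track the independence-relevant statistic directly at the 3-graph level: for every vertex $x$ and every pair of disjoint $n$-sets $A,B$, it controls the number of open (and eventually chosen) triples $xab$ with $a\in A$, $b\in B$ throughout the nibble. The union bound is taken over all triples $(x,A,B)$ at once, and the concentration is strong enough ($N^{-\omega(n)}$) to survive the $N^{2n+1}$ choices. This is genuinely a 3-graph argument with new variables (e.g.\ the $U_i(x,u,v,w)$ counts) that have no analogue in the graph case; it does not factor through per-link pseudorandomness.
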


Our theorem follows from the following stronger result, which guarantees not only that for every vertex $x$ and every $n$-set $A$ omitting $x$, there is an edge of the form $xyz$, where $yz \in {A \choose 2}$, but in fact that there are many such edges. It is more convenient to consider the bipartite setting as follows.
Given a $3$-graph $H$, disjoint sets  $A, B \subseteq V(H)$, and $x \in V(H) \setminus (A \cup B)$, let $e_{H, x}(A, B)$  be the number of edges in $H$ of the form $xab$ where $a \in A$ and $b \in B$. The following theorem is our main result.

\begin{theorem} [\bf Main result]
\label{main_theorem}
There exists $\beta_0, D_0$ such that for all $\delta \in (0,1], \beta \in (0, \beta_0)$,  $C > \frac{D_0}{\delta^2 \sqrt{\beta}}$ and  $N > N_0(\delta, \beta, C)$, there exists a $K_4^-$ free $3$-graph $H$ on $N$ vertices such that for every two disjoint $n$-vertex subsets $A, B$ with $n := C\sqrt{N \log N}$ and for every vertex $x \not\in A \cup B$, $$e_{H, x}(A, B) = (1 \pm \delta) \rho n^2$$ where $\rho := \sqrt{\frac{\beta \log N}{3N}}$.
\end{theorem}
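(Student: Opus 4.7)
The plan is to construct $H$ as the endstate of a $K_4^-$-free random 3-graph process, adapting the pseudorandomness analysis that Guo and Warnke carried out for the triangle-free graph process. Start with $H_0 = \emptyset$ on vertex set $[N]$. At each step $i \geq 1$, pick $e_i$ uniformly from the set $\mathcal{O}_{i-1}$ of \emph{open} triples -- those $e \notin H_{i-1}$ for which $H_{i-1} \cup \{e\}$ remains $K_4^-$-free -- and set $H_i = H_{i-1} \cup \{e_i\}$. Concretely, $xyz$ is closed precisely when the link of one of $x, y, z$ already contains a common neighbor of the other two vertices (equivalently, when adding $xyz$ would create a triangle in some link graph). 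Run the process to a stopping time $M$ at which $|H_M| \approx \rho \binom{N}{3}$, and take $H = H_M$.

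The analysis proceeds via the differential equation method, in the style of Bohman--Keevash and Warnke. With a suitable rescaled time $t = t(i)$, we simultaneously track, for each relevant triple $(x, A, B)$, the target statistic $e_{H_i, x}(A, B)$, the restricted open-edge count $|\mathcal{O}_i(x, A, B)|$ of open triples $xab$ with $a \in A, b \in B$, and auxiliary codegree quantities at pairs that govern how fast triples in this slot become closed. Solving the induced ODE system heuristically predicts a decay $|\mathcal{O}_i(x,A,B)|/|A||B| \approx e^{-\Theta(t^2)}$ and the integrated trajectory $e_{H_i, x}(A, B) \approx \rho |A||B|$ at the stopping time, matching the density one would see in a typical random $K_4^-$-free 3-graph of density $\rho$.

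The main work is concentration around these trajectories. For each fixed $(x, A, B)$, we bracket $e_{H_i, x}(A, B)$ between suitable super- and sub-martingales whose one-step increments are $O(1)$ and whose cumulative quadratic variation is $O(\rho n^2)$; Freedman's inequality then yields a per-triple failure probability of $\exp\!\bigl(-\Omega(\delta^2 \rho n^2)\bigr)$. We union bound over all $(x, A, B)$ with $|A| = |B| = n$, of which there are at most $N\binom{N}{n}^2 = \exp(O(n \log N))$. A direct calculation gives
$$\rho n^2 \;=\; C^2 \sqrt{\beta/3}\;\sqrt{N}\,(\log N)^{3/2} \quad \text{and} \quad n \log N \;=\; C\,\sqrt{N}\,(\log N)^{3/2},$$
so the ratio $\delta^2 \rho n^2 / (n \log N)$ equals $\delta^2 C \sqrt{\beta/3}$; the hypothesis $C > D_0/(\delta^2 \sqrt{\beta})$ is precisely what makes this large enough to absorb the union bound. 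One must also show the stopping time $M$ is reached before the process degenerates, which follows from tracking the total open-count $|\mathcal{O}_i|$ by the same method.

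The main obstacle is the concentration step. The $K_4^-$-freeness constraint couples the evolution of different vertex links via the shared 3-edges: a triple $xyz$ can be closed because of structure in $L_x$, $L_y$, or $L_z$, and each newly added edge can close triples in three different links simultaneously. Consequently, the dynamical system is strictly more intricate than for the graph triangle-free process, and the auxiliary codegree variables at pairs must be tracked with attention to this hypergraph-level interaction. The heart of the argument is showing that small fluctuations in all these coupled quantities -- open-counts restricted to slots, codegrees, and the targets $e_{H_i, x}(A, B)$ -- do not amplify beyond the $\delta$-error budget before the stopping time, so that a uniform Freedman-style concentration holds throughout. This is where the Guo--Warnke framework requires careful extension to the 3-graph setting.
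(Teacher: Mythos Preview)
Your plan diverges from the paper in a fundamental way: you propose to run the \emph{random greedy $K_4^-$-free process} (add one uniformly random open triple at a time, \`a la Bohman--Keevash), whereas the paper uses a \emph{nibble/semi-random} algorithm in the style of Kim and Guo--Warnke. In the paper's construction, at each of $I=\lceil N^\beta\rceil$ rounds one exposes a sparse random subset $\Gamma_{i+1}\subseteq O_i$ (each open triple included independently with probability $p=\sigma/\sqrt N$), deletes a maximal disjoint family of bad configurations to keep $H_{i+1}$ $K_4^-$-free, and then prunes $O_i$ down to $O_{i+1}$ with an additional stabilization probability $\hat p_{e,i}$ chosen so that the survival probability of each open triple is essentially a function of $i$ alone. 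Concentration is obtained round-by-round via bounded-differences inequalities (Theorems~\ref{chebyshev's inequality varraint} and~\ref{Chernoff variant limited overlap}) and Krivelevich's maximal-subfamily bound (Theorem~\ref{maximal subset bound}), not via Freedman along the whole trajectory.

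This is not merely a stylistic difference. The authors state explicitly at the end of the introduction that they \emph{were not able} to prove Theorem~\ref{main_theorem} via the random greedy $K_4^-$-free process and that this remains open. So your proposed route is, as far as the paper is concerned, an unresolved problem. Your outline correctly identifies the union-bound arithmetic (the ratio $\delta^2\rho n^2/(n\log N)\asymp \delta^2 C\sqrt{\beta}$ is exactly why the hypothesis $C>D_0/(\delta^2\sqrt\beta)$ appears), but the hard part is the concentration step you flag at the end. In the nibble analysis the paper needs genuinely new auxiliary variables with no graph analogue---for instance the event $\mathcal N_i^+$ controlling $|N_{\Gamma_i}(vx)\cap A|$ for \emph{all} subsets $A$, and the five-vertex quantity $U_i(x,u,v,w)$---and the delicate decomposition $X_2=\hat X_2+X_2^{AB}+X_2^A+X_2^B$ in Lemma~\ref{Q event lemma} to handle triples closed by two fresh edges. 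It is precisely this hypergraph-level coupling (a triple $xyz$ can be closed through any of the three links) that you acknowledge as the main obstacle, and the paper's experience suggests it is not clear how to push the Freedman/trajectory argument through for the one-at-a-time process. Absent a concrete mechanism for controlling these interactions in the greedy setting, your proposal remains a plausible but unverified plan rather than a proof.
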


There are two general approaches to obtain graphs or hypergraphs that have good Ramsey properties of the type sought in Theorems~\ref{mainSn} and~\ref{main_theorem}: the nibble method pioneered by Ajtai-Koml\'os-Szemer\' edi~\cite{AKS, AKS81} and R\"odl~\cite{Rodl85}, and the differential equations method developed by Wormald~\cite{Wormald} for combinatorial settings. We prove Theorem~\ref{main_theorem} using the nibble method. This was the approach used by Kim~\cite{Kim} in his proof of $r(3,n) = \Omega(n^2/\log n)$.  More recently, Guo and Warnke~\cite{GW} established a stronger result than $r(3,n) = \Omega(n^2/\log n)$ in two ways. First, they not only found one triangle-free graph on $N$ vertices with independence number $O(\sqrt{N \log N})$, but in fact decomposed most of the edges of $K_N$ into such graphs. Second, they were able to obtain such a decomposition as long as their initial graph was sufficiently pseudorandom. Along the way, they introduced several simplifications of Kim's argument.  

Our proof of Theorem~\ref{main_theorem} follows the broad outline of Kim's approach together with the additional technical ingredients provided by Guo and Warnke. However, creating $\km$-free 3-graphs without large stars in their complement, rather than just triangle-free graphs with small independent sets, is more challenging. For example, certain random variables that we must track that appear in  our semirandom algorithm do not even make sense to define in the graph case. New ideas are required to implement the additional technical steps needed to control their deviation probability from their mean (see, for example, the event $\mathcal{N}_i^+$, or the Proof of Lemma~\ref{Q event lemma} in Section~\ref{ss}).

We end by remarking that we were not able to prove Theorem~\ref{main_theorem} using the random greedy $K_4^-$-free   process, and this remains open.
\section{Nibble Algorithm}

Throughout the paper we use the notation $xy$ and $xyz$ for  $\{x,y\}$ and $\{x,y,z\}$, respectively. 

We will form a $\km$-free $3$-graph on an $N$-element set $V$ by iteratively selecting a set of edges and removing a subset of those edges to destroy all copies of $\km$.
At each step in the process, we will maintain a triple $(E_i, H_i, O_i)$ where
\begin{itemize}
     \item $E_i$ is
    a set of {\em picked} or {\em chosen} edges which may contain copies of $\km$, 
    
    \item  $H_i \subseteq E_i$ where  $H_i$ is $K_4^-$-free. We will iterate the process $I$ times and our final 3-graph will be $H_I$.

     \item  $O_i \subseteq {V\choose 3}\setminus E_i$ is a set of {\em open} edges. No edge in $O_i$ forms a $\km$ together with any two edges from $E_i$. 
\end{itemize}

We will start by initializing  $E_0 = H_0 = \emptyset$ and $O_0 = \binom{V}{3}$.
Assume that at step $i$ we have sets $E_i, H_i, O_i$ such that $H_i \subseteq E_i$, $E_i \cap O_i = \emptyset$, $H_i$ is $K_4^-$ free, and for any $e \in O_i$ there do not exist $f, g \in E_i$ such that $efg$ forms a $K_4^-$. Set
\begin{equation}
\label{eqn: sigma and p definition}
\sigma := \frac{1}{\log^2 N} \qquad \hbox{ and } \qquad p := \frac{\sigma}{\sqrt{N}} = \frac{1}{\sqrt N\log^2 N }.
\end{equation}
Form a set of edges $\Gamma_{i+1} \subseteq O_i$ by including each edge $e \in O_i$ in $\Gamma_{i+1}$ independently with probability $p$.
Let $E_{i+1} = E_i \cup \Gamma_{i+1}$. Because $\Gamma_{i+1} \subseteq O_i$, the set $E_{i+1}$ contains no $K_4^-$ with exactly one edge in $\Gamma_{i+1}$.

There may be three edges in $\Gamma_{i+1}$ which form a $K_4^-$, or two edges in $\Gamma_{i+1}$ and one edge in  $E_i$ which together form a $K_4^-$.  We gather these bad edges and remove a cleverly chosen subset of them so that there is
no $\km$ of these two types after we have removed the edges. Then we add to $H_i$ the edges that remain and form $H_{i+1}$. This process will ensure that
$H_{i+1}$ is $K_4^-$ free. We do this as follows. First, let
 \begin{equation}
 \label{eqn: B^2 def}
 \notag
 B^2_{i+1} = \{ef \subseteq \Gamma_{i+1} : \exists g \in H_i, efg \text{ is a }K_4^-\}
\notag \end{equation}
 \begin{equation}
 \label{eqn: B^3 def}
 \notag
B^3_{i+1} = \{efg \subseteq \Gamma_{i+1} : efg \text{ is a } K_4^-\}
\end{equation}
\begin{equation}
\label{eqn: B def}
\notag
B_{i+1} = B^2_{i+1} \cup B^3_{i+1}.
\end{equation}

Next, let $D_{i+1}$ be a maximal collection of pairwise disjoint subsets of
$B_{i+1}$. More precisely $D_{i+1} \subseteq B_{i+1}$ with the  property that for any $ef \in B^2_{i+1}$, at least  one of $e,f$ lies in some set in $D_{i+1}$, and 
for any $e'f'g' \in B^3_{i+1}$,  at least one of $e',f',g'$ lies in some set in $D_{i+1}$. Let 
\begin{equation}
\notag 
H_{i+1} = H_i \cup (\Gamma_{i+1} \setminus \cup_{S \in D_{i+1}} \cup_{e \in S} \{e\}).
\end{equation}
In other words, we have added to $H_i$ all sets that do not appear in an element of $D_{i+1}$.

Let us now prove that $H_{i+1}$ is $\km$-free. We may assume (by induction) that $H_i$ is $\km$-free. There is no $\km$ in $H_{i+1}$ with exactly two edges in $H_i$ by the definition of $O_i$. There is no $\km$ in $H_{i+1}$ with exactly one edge $g$ in $H_i$ since the other two edges $e,f$ must satisfy $ef \in B^2_{i+1}$ so either $e$ or $f$ must lie in some pair from $D_{i+1}$ which means it is not in $H_{i+1}$. Lastly, there is no $\km$ in $H_{i+1}$ with all three edges $e,f,g$ in $H_{i+1}\setminus H_i$ since  $e,f,g$ must satisfy $efg \in B^3_{i+1}$ so one of them must lie in some triple from $D_{i+1}$ which means it is not in $H_{i+1}$.

We now explain how the set $O_i$ is updated to $O_{i+1}$. The updated set $O_{i+1}$ must have the property that each edge in $O_{i+1}$ does not form a copy of $\km$ with already chosen edges, so that it will be possible to choose it in step $i+1$; this copy of $\km$ can have both its remaining edges chosen in step $i$, or have one edge chosen in step $i$ and one edge chosen in some previous step. Formally, for each $e \in O_i$, let 
\begin{equation}
\label{eqn: S_e def}
\notag
\hat{S}_{i}(e) = \{f \in O_i : \exists g \in E_i, efg \text{ is a } K_4^-\}.
\end{equation}
Next, define
\begin{equation}
\label{eqn: C^1 def}
\notag
C^1_{i+1} = \{e \in O_i : \hat{S}_i(e) \cap \Gamma_{i+1} \neq \emptyset\}
\end{equation}
\begin{equation}
\label{eqn: C^2 def}
\notag
C^2_{i+1} = \{e \in O_i : \exists f, g \in \Gamma_{i+1}, efg \text{ is a } K_4^-\}.
\end{equation}
Our rough plan is to update $O_i$ by removing all
triples from $\Gamma_{i+1} \cup C^1_{i+1} \cup C^2_{i+1}$.  However, for technical reasons, it is convenient to remove some more edges from $O_i$ as follows. Let $Y_{i+1}\subseteq O_i$ be the (random) subset obtained by including each 
 $e \in O_i$ in $Y_{i+1}$ independently with probability
\begin{equation}
    \label{eqn: p hat def}
    \hat{p}_{e,i} := 1 - (1-p)^{\max \{6\sqrt{N}q_i (\pi_i + \sqrt{\sigma}) - |\hat{S}_i(e)|, 0\}}.
    \end{equation}
We form $O_{i+1}$ by removing from   $O_i$ the edges which were picked in $\Gamma_{i+1}$, the edges in $C_{i+1}^1$ and $C_{i+1}^2$, and the edges in $Y_{i+1}$:
\begin{equation}
    \label{eqn: O_i change def}
    O_{i+1} = O_i \setminus (\Gamma_{i+1} \cup C^1_{i+1} \cup C^2_{i+1} \cup Y_{i+1}).
\end{equation}

We will continue iterating this process until we have formed the set $H_I$ where $I := \lceil N^{\beta} \rceil$. Now let $H := H_I$. Our goal is to prove that whp (with high probability) $H$ satisfies the conditions in Theorem \ref{main_theorem}.

\section{Expected Trajectories via Differential Equations} \label{sec:traj}

We use the following heuristics to predict the behavior of the size of our sets $O_i$ and $E_i$. Assume that for all  $e \in {V \choose 3}$, we have 
$$\mathbb{P}(e \in O_i) \approx q_i \qquad \hbox{ and } \qquad \mathbb{P}(e \in E_i) \approx \frac{\pi_i}{\sqrt{N}},$$ where $\pi_i=O(\sqrt{\log N})$ and these events are approximately independent for all $e$. Now notice that
\begin{equation}
    \label{eqn: One Step Change E_i heuristic}
\mathbb{P}(e \in E_{i+1}) - \mathbb{P}(e \in E_i) = \mathbb{P}(e \in \Gamma_{i+1} | e \in O_i) \cdot \mathbb{P}(e \in O_i) \approx p q_i =\frac{\sigma q_i}{\sqrt{N}}.
\end{equation}
Then multiplying (\ref{eqn: One Step Change E_i heuristic}) by $\sqrt{N}$ gives $\pi_{i+1} - \pi_i \approx \sigma q_i$. Also since $E_0 = \emptyset$, we have $\pi_0 \approx 0$.

Next, we approximate $O_i \setminus O_{i+1} = \Gamma_{i+1} \cup C^1_{i+1} \cup C^2_{i+1} \cup Y_{i+1} \approx C^1_{i+1} \cup Y_{i+1}$. Notice 
\begin{equation}
\label{eqn: Expected Value of S_e}
\mathbb{E}(|\hat{S}_i(e)|) \approx 3(N-3)q_i \left(1 - \left(1 - \frac{\pi_i}{\sqrt{N}}\right)^2\right) \approx 3Nq_i\left(\frac{2\pi_i}{\sqrt N}\right)
\approx 6\sqrt{N}q_i \pi_i.
\end{equation}
Thus by the estimate for $O_i \setminus O_{i+1}$ and from (\ref{eqn: p hat def}) and (\ref{eqn: Expected Value of S_e}), we get that for all $e \in O_i$ 
\begin{equation}
\label{eqn: Probability e in O_i+1 given O_i heuristic}
\notag
    \mathbb{P}(e \in O_{i+1} | e \in O_i) \approx (1 - \hat{p}_{e, i})(1- p)^{1 + |\hat{S}_{i}(e)|} \approx (1-p)^{6\sqrt{N}q_i (\pi_i + \sqrt{\sigma})} \approx 1 - 6q_i \pi_i \sigma.
    \end{equation}
This heuristic suggests that  $\mathbb{P}(e \in O_{i+1} | e \in O_i)$ depends only on the step $i$ of our iteration, and this is the reason we introduced the stabilization probability  $\hat{p}_{e, i}$. 
Therefore, 
\begin{equation}
    \label{eqn: change q heuristic}
    \notag
q_{i+1} - q_i \approx \mathbb{P}(e \in O_{i+1}) - \mathbb{P}(e \in O_i) = \mathbb{P}(e \in O_{i+1} | e \in O_i)\mathbb{P}(e \in O_i) - \mathbb{P}(e \in O_i) \approx -6q_i^2\pi_i \sigma.
\end{equation}

Next suppose $\pi_i \approx \Psi(i\sigma)$ where $\Psi$ is a smooth function. Then 
$$\Psi'(i\sigma) \approx \frac{\Psi((i+1)\sigma) - \Psi(i\sigma)}{\sigma} \approx
\frac{\pi_{i+1}-\pi_i}{\sigma} \approx q_i$$
and for $x=i\sigma$, 
$$\Psi''(x) \approx \Psi''(i\sigma) = \frac{\Psi'((i+1)\sigma) - \Psi'(i\sigma)}{\sigma} \approx \frac{q_{i+1} - q_i}{\sigma} \approx -6q_i^2\pi_i \approx 
-6(\Psi'(x))^2\Psi(x).$$ Solving this differential equation yields
\begin{equation}
\label{eqn: Psi differential equation}
\Psi'(x) = e^{-3\Psi^2(x)}.
\end{equation}
Moreover, writing $f= \Psi^{-1}$ so that $(f\Psi)(x) = x$ for $x\ge 0$, and taking derivatives we obtain
$$1 = (f\Psi)'(x) = f'(\Psi(x)) \Psi'(x) = f'(\Psi(x)) e^{-3\Psi^2(x)}.$$
Writing $t = \Psi(x)$, we get $f'(t) = e^{3t^2}$.  Further, since $\pi_0 \approx 0$ and $q_0 \approx 1$, we set  $\Psi(0) = 0$ and $\Psi'(0) = 1$. Consequently,
\begin{equation} \label{eqn:implicit}
x = f(\Psi(x))- f(\Psi(0)) = \int_{\Psi(0)}^{\Psi(x)} f'(t) \, dt = \int_0^{\Psi(x)} e^{3t^2}\, dt.    
\end{equation}

Now we will formally define $\Psi(x)$ as the solution to the differential equation (\ref{eqn: Psi differential equation}) with $\Psi(0) = 0$. Then (\ref{eqn:implicit}) also holds  and we define 
\begin{equation}
\label{eqn: q def}
q_i = \Psi'(i\sigma)
\end{equation}
\begin{equation}
\label{eqn: pi def}
\pi_i = \sigma + \sum_{j=0}^{i-1} \sigma q_j.
\end{equation}

\section{Technical Estimates}

In this section will collect some bounds on $q_i$ and $\pi_i$ which we will use throughout the proof of the main theorem. The proofs of these bounds will be given in the Appendix. Recall that 
$$\sigma= \frac{1}{\log^2 N}, \qquad \beta \in (0, \beta_0), \qquad I=\lceil N^{\beta}\rceil.$$

\begin{lemma}
    \label{q and pi bounds}
    Let $\Psi(x), q_i$, and $\pi_i$  be defined as in (\ref{eqn:implicit}), (\ref{eqn: q def}), and (\ref{eqn: pi def}). 
        \begin{equation}
        \label{Psi bound}
        \sqrt{\frac{\log (\sqrt{3} x)}{3}} - \frac{1}{\sqrt{3}} \leq \Psi(x) \leq \sqrt{\frac{\log (\sqrt{3}x)}{3}} + \frac{1}{\sqrt{3}} \text{ for } x \geq e
        \end{equation}

        \begin{equation}
        \label{q less than 1}
        0 \leq q_i \leq q_0 = 1 \text{ for } i \geq 0  
        \end{equation}

        \begin{equation}
        \label{q and pi bounds pi close to psi}
        \pi_{i+1} - \pi_i = \sigma q_i \text{ and } \pi_i - \Psi(i\sigma) \in [\sigma, 2\sigma] \text{ for all } i \geq 0 
        \end{equation}

        \begin{equation}
        \label{sqrt sigma pi < 1}
        \sqrt{\sigma} \pi_i \leq 1 \text{ for } 0 \leq i \leq I
        \end{equation}

        \begin{equation}
        \label{q and pi bounds, q times pi less than 1}
            q_i \pi_i^k \leq 1 \text{ for } 0 \leq i \leq I \text{ and } k \in \{1,2\}
        \end{equation}

        \begin{equation}
        \label{bound on q_i - q_i+1}
        |(q_i - q_{i+1}) - 6 \sigma q_i^2\pi_i| \leq 16\sigma^2 q_i^2 \text{ for all } i \geq 0
        \end{equation}

        \begin{equation} \label{eqn:sigmaqN}
        q_i \geq \frac{1}{10}N^{-\beta + o(1)} \text{ for all } 0 \leq i \leq I
        \end{equation}

        \begin{equation}
        \label{p hat < q}
        \hat{p}_{e,i} \leq q_i \text{ for all }e \in O_i
        \end{equation}

        \begin{equation}
        \label{q_i - q_i+1 < sigma min...}
        0 \leq q_i - q_{i+1} \leq 12\sigma \min \{q_i, q_{i+1}, q_i \pi_i \}
        \end{equation}
\end{lemma}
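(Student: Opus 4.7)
The plan is to derive every bound in Lemma~\ref{q and pi bounds} from three ingredients: the integral representation (\ref{eqn:implicit}), the identities $q_i=\Psi'(i\sigma)=e^{-3\Psi^2(i\sigma)}$ and $\pi_i=\sigma+\sigma\sum_{j=0}^{i-1}q_j$, and the scale $I\sigma=N^{\beta}/\log^2 N$. The technical work sits in (i) inverting the integral (\ref{eqn:implicit}) to obtain (\ref{Psi bound}) and (ii) a careful integral/mean-value comparison for (\ref{bound on q_i - q_i+1}); everything else reduces to monotonicity, Riemann-sum estimates, or direct substitution.

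I would begin with (\ref{Psi bound}) by analysing $x=\int_0^{T}e^{3t^2}\,dt$ with $T=\Psi(x)$. Integration by parts on $[1,T]$ yields the asymptotic $x=\frac{e^{3T^2}}{6T}(1+O(T^{-2}))+O(1)$, from which $3T^2=\log(\sqrt{3}\,x)+O(\log T)$ after a one-step bootstrap, giving the two-sided bound up to the stated additive $1/\sqrt 3$. Items (\ref{q less than 1}) and the first half of (\ref{q and pi bounds pi close to psi}) are then immediate: differentiating the ODE gives $\Psi''(x)=-6\Psi(x)(\Psi'(x))^2\leq 0$, so $\Psi'$ is non-increasing and $q_i\leq\Psi'(0)=1$, while $\pi_{i+1}-\pi_i=\sigma q_i$ is the definition. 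For the second half of (\ref{q and pi bounds pi close to psi}), compare the left-endpoint Riemann sum $\sigma\sum_{j=0}^{i-1}\Psi'(j\sigma)$ to $\int_0^{i\sigma}\Psi'(x)\,dx=\Psi(i\sigma)$: monotonicity of $\Psi'$ plus a one-term shift gives simultaneously $\pi_i-\Psi(i\sigma)\geq\sigma$ (from the extra additive $\sigma$ in $\pi_i$) and $\pi_i-\Psi(i\sigma)\leq 2\sigma$ (after bounding the shift by $\sigma\Psi'(0)=\sigma$).

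For (\ref{sqrt sigma pi < 1}), (\ref{q and pi bounds, q times pi less than 1}), and (\ref{eqn:sigmaqN}), I would substitute $i\leq I=\lceil N^\beta\rceil$ into (\ref{Psi bound}) to get $\Psi(I\sigma)^2\leq \beta\log N/3+O(\sqrt{\log N})$, from which $\sqrt\sigma\,\pi_i=O(\sqrt\beta)\leq 1$ for $\beta<\beta_0$ and $q_i\geq q_I=e^{-3\Psi^2(I\sigma)}\geq N^{-\beta+o(1)}$; for (\ref{q and pi bounds, q times pi less than 1}) combine $\pi_i=\Psi(i\sigma)+O(\sigma)$ with the elementary fact that $t^k e^{-3t^2}\leq 1$ for $t\geq 0$ and $k\in\{1,2\}$. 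The core estimate (\ref{bound on q_i - q_i+1}) comes from writing
\[
q_i-q_{i+1}=-\int_{i\sigma}^{(i+1)\sigma}\Psi''(x)\,dx=6\int_{i\sigma}^{(i+1)\sigma}\Psi(x)(\Psi'(x))^2\,dx,
\]
and comparing pointwise to $6\sigma\pi_i q_i^2$. Using mean-value estimates $|\Psi(x)-\Psi(i\sigma)|\leq \sigma q_i$ and $|\Psi'(x)-q_i|\leq \sigma|\Psi''(i\sigma)|\leq 6\sigma\Psi(i\sigma)q_i^2$ on the integrand, together with $\Psi(i\sigma)^2 q_i\leq 1$ from (\ref{q and pi bounds, q times pi less than 1}) and $|\Psi(i\sigma)-\pi_i|\leq 2\sigma$ from (\ref{q and pi bounds pi close to psi}), direct book-keeping yields the claimed constant $16$.

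It remains to handle (\ref{p hat < q}) and (\ref{q_i - q_i+1 < sigma min...}). For (\ref{p hat < q}), apply $1-(1-p)^M\leq pM$ with the exponent $M\leq 6\sqrt N q_i(\pi_i+\sqrt\sigma)$; this gives $\hat p_{e,i}\leq 6\sigma q_i(\pi_i+\sqrt\sigma)\leq 12\sqrt\sigma\, q_i\leq q_i$ after invoking (\ref{sqrt sigma pi < 1}). Bound (\ref{q_i - q_i+1 < sigma min...}) follows from (\ref{bound on q_i - q_i+1}) via the three elementary estimates $q_i^2\pi_i\leq q_i$ (from $q_i\pi_i\leq 1$), $q_i^2\pi_i\leq q_i\pi_i$ (from $q_i\leq 1$), and $q_i\leq 2q_{i+1}$ (a consequence of $|q_i-q_{i+1}|\leq q_i/2$, obtained by plugging (\ref{bound on q_i - q_i+1}) back in for $\sigma$ sufficiently small). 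The main obstacle throughout is the integral comparison for (\ref{bound on q_i - q_i+1}), since it is the only step that requires tracking the simultaneous variation of $\Psi$, $\Psi'$, and the Riemann gap $\pi_i-\Psi(i\sigma)$ sharply enough to reach a linear-in-$\sigma^2$ error; every other item collapses to a one-line substitution or monotonicity argument.
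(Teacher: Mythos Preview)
Your plan is correct and matches the paper's proof in overall structure: monotonicity of $\Psi'$ for (\ref{q less than 1}), Riemann-sum comparison for (\ref{q and pi bounds pi close to psi}), substitution of $i\le I$ into (\ref{Psi bound}) for (\ref{sqrt sigma pi < 1}), (\ref{q and pi bounds, q times pi less than 1}), (\ref{eqn:sigmaqN}), and the same one-line derivations for (\ref{p hat < q}) and (\ref{q_i - q_i+1 < sigma min...}). The only methodological differences are cosmetic: for (\ref{Psi bound}) the paper bounds $\int_0^{\sqrt{\log x}\pm 1}e^{t^2}\,dt$ directly by elementary rectangle comparisons (which yields the constant $1/\sqrt 3$ for all $x\ge e$ without any asymptotic bootstrap), and for (\ref{bound on q_i - q_i+1}) the paper uses the second-order Taylor expansion $\Psi'((i{+}1)\sigma)=\Psi'(i\sigma)+\sigma\Psi''(i\sigma)+\tfrac{\sigma^2}{2}\Psi'''(s)$ together with explicit formulas for $\Psi''$ and $\Psi'''$, which is equivalent to your integral mean-value comparison.
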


In particular, if we let $\beta_0 = \frac{1}{100}$, then by (\ref{eqn:sigmaqN}) and $\beta \leq \beta_0$, there exists $c>0$ such that the following holds for all $0\le i\le I$ and $j <10$:

\begin{equation}
\label{q_i sqrt N estimate}
q_i^j \sqrt{N} \geq N^{-j\beta + \frac{1}{4}} \geq N^c.
\end{equation}

\section{Events}

Let $X_i = (E_i, H_i, O_i, \Gamma_i, Y_i)$
and  $X = \{X_i\}_{0 \leq i \leq I}$. We let $\mathcal{F}_i$ denote the $\sigma$-algebra generated by $\{X_j\}_{0 \leq j \leq i}$ and $(\mathcal{F}_i)_{0 \leq i \leq I}$ be the natural filtration with $\{X_i\}_{0 \leq i \leq I}$. Recall that we use the notation $abc$ for $\{a,b,c\}$. 

Next we give definitions which apply to  an edge $xuv$ with a distinguished vertex $x$.
\begin{equation}
\label{eqn: R_x,uv def}
\notag
R_{i}(x,uv) := \{w \in [N] \setminus \{u,v,x\} : xuw, xvw \in O_i\}
\end{equation}
\begin{equation}
    \label{eqn: S_x,uv def}
    \notag
    S_{i}(x,uv) = \{w \in [N] \setminus \{u,v,w\} : |\{xuw,xvw\} \cap O_i| = |\{xuw,xvw\} \cap E_i| = 1\}
\end{equation}
\begin{equation}
\label{eqn: T_x,uv def}
\notag
T_{i}(x,uv) := \{w \in [N] \setminus \{u,v,x\} : xuw, xvw \in E(i)\}
\end{equation}
\begin{equation}
    \notag
    U_i(x,u,v,w) := \{z \in [N] \setminus \{x,u,v,w\} : xuv, xuw, xuz \in O_i \text{ and } xvz, uwz \in E_i\}
\end{equation}
\begin{equation}
    \notag
    \hat{S}_i(x,uv) = \{xuw \in O_i : w \in S_i(x,uv)\}
\end{equation}
The following figures depict the above definitions. Note for $S_i(x,uv)$ that there are two types of edges, one where we include $xuw \in O_i$ and $xvw \in E_i$ and the other where we have $xuw \in E_i$ and $xvw \in O_i$.

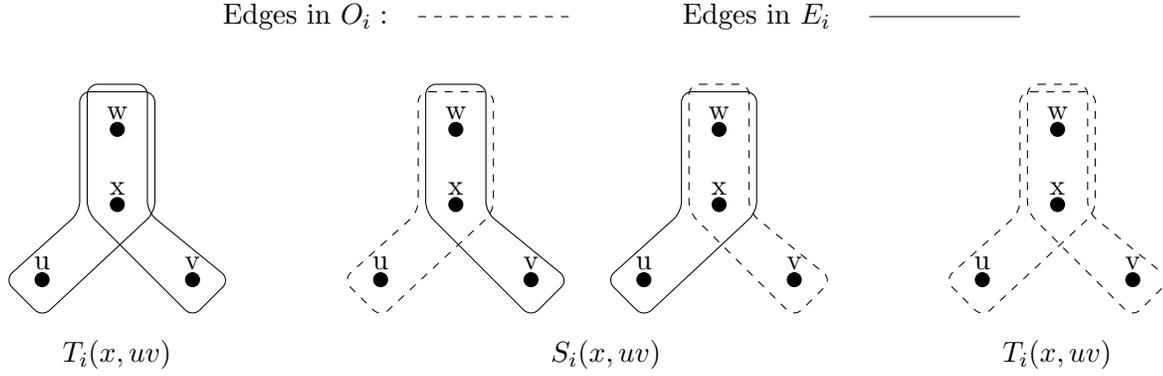
\begin{figure}[hbt!]
   \begin{tikzpicture}
       \node(T) at (0,-2){$T_i(x,uv)$};
       \node (x) at (0,0){};
       \fill(x) circle(0.1) node[above] {x};
       \node (w) at (0,1){};
       \fill(w) circle(0.1) node[above] {w};
       \node (u) at (-1,-1){};
       \fill(u) circle(0.1) node[above] {u};
       \node (v) at (1,-1){};
       \fill(v) circle(0.1) node[above] {v};

       \draw[rounded corners] (-.5,1.5) -- (.5,1.5) -- (.5,-.1) -- (-1,-1.5) -- (-1.5,-1) -- (-.5,-.1) -- cycle;

       \draw[rounded corners] (-.4, 1.6) -- (.4,1.6) -- (.4,-.1) -- (1.5,-1) -- (1,-1.5) -- (-.4,-.1) -- cycle;

\node(S) at (6.5,-2){$S_i(x,uv)$};
       \node (x) at (0+4.5,0){};
       \fill(x) circle(0.1) node[above] {x};
       \node (w) at (0+4.5,1){};
       \fill(w) circle(0.1) node[above] {w};
       \node (u) at (-1+4.5,-1){};
       \fill(u) circle(0.1) node[above] {u};
       \node (v) at (1+4.5,-1){};
       \fill(v) circle(0.1) node[above] {v};

       \draw[rounded corners, dashed] (-.5+4.5,1.5) -- (.5+4.5,1.5) -- (.5+4.5,-.1) -- (-1+4.5,-1.5) -- (-1.5+4.5,-1) -- (-.5+4.5,-.1) -- cycle;

       \draw[rounded corners] (-.4+4.5, 1.6) -- (.4+4.5,1.6) -- (.4+4.5,-.1) -- (1.5+4.5,-1) -- (1+4.5,-1.5) -- (-.4+4.5,-.1) -- cycle;

       \node (x) at (0+8,0){};
       \fill(x) circle(0.1) node[above] {x};
       \node (w) at (0+8,1){};
       \fill(w) circle(0.1) node[above] {w};
       \node (u) at (-1+8,-1){};
       \fill(u) circle(0.1) node[above] {u};
       \node (v) at (1+8,-1){};
       \fill(v) circle(0.1) node[above] {v};

       \draw[rounded corners] (-.5+8,1.5) -- (.5+8,1.5) -- (.5+8,-.1) -- (-1+8,-1.5) -- (-1.5+8,-1) -- (-.5+8,-.1) -- cycle;

       \draw[rounded corners, dashed] (-.4+8, 1.6) -- (.4+8,1.6) -- (.4+8,-.1) -- (1.5+8,-1) -- (1+8,-1.5) -- (-.4+8,-.1) -- cycle;

\node(R) at (12.5,-2){$T_i(x,uv)$};
       \node (x) at (0+12.5,0){};
       \fill(x) circle(0.1) node[above] {x};
       \node (w) at (0+12.5,1){};
       \fill(w) circle(0.1) node[above] {w};
       \node (u) at (-1+12.5,-1){};
       \fill(u) circle(0.1) node[above] {u};
       \node (v) at (1+12.5,-1){};
       \fill(v) circle(0.1) node[above] {v};

       \draw[rounded corners, dashed] (-.5+12.5,1.5) -- (.5+12.5,1.5) -- (.5+12.5,-.1) -- (-1+12.5,-1.5) -- (-1.5+12.5,-1) -- (-.5+12.5,-.1) -- cycle;

       \draw[rounded corners, dashed] (-.4+12.5, 1.6) -- (.4+12.5,1.6) -- (.4+12.5,-.1) -- (1.5+12.5,-1) -- (1+12.5,-1.5) -- (-.4+12.5,-.1) -- cycle;

\node(OpenKey) at (2.5,2.5) {Edges in $O_i:$};
\node(PickedKey) at (8.5,2.5) {Edges in $E_i$};
\draw[dashed] (4,2.5) -- (6,2.5);
\draw (10,2.5) -- (12,2.5);
       
   \end{tikzpicture}

\caption{$T_i(x,uv) \text{, } S_i(x,uv) \text{, and } R_i(x,uv)$}
\label{T S and R fig} 
\end{figure}

\begin{figure}[hbt!]
\centering
\begin{tikzpicture}
\node (x) at (0,0) {};
\fill(x) circle(0.1) node[above]{$x$};
\node (v) at (-2,2) {};
\fill(v) circle(0.1) node[above]{$v$};
\node(u) at (2,2) {};
\fill(u) circle(0.1) node[above]{$u$};
\node (z) at (-2,-2){};
\fill(z) circle(0.1) node[below]{$z$};
\node (w copy) at (2,-2) {};
\fill(w copy) circle(0.1) node[below]{$w_3$};

\draw [rounded corners,dashed] (-2.6,2.4) -- (2.6,2.4) -- (0,-0.8) -- cycle;

\draw [rounded corners,dashed] (2.4,2.7) -- (-0.4,0) -- (2.4,-2.7) -- cycle;

\draw [rounded corners] (-2.4,2.7) -- (0.4,0) -- (-2.4,-2.7) -- cycle;

\draw[rounded corners] (-2.5,-1.7) -- (1.8, -1.7) -- (1.8, 2.7) -- (2.5, 2.7) -- (2.5, -2.5) -- (-2.5, -2.5) -- cycle;

\draw[rounded corners, dashed] (-3,-2) -- (-2,-3) -- (3,2) -- (2,3) -- cycle;

\node(OpenKey) at (-4,3.5) {Edges in $O_i$};
\node(PickedKey) at (2,3.5) {Edges in $E_i$};
\draw[dashed] (-2.5,3.5) -- (-0.5,3.5);
\draw (3.5,3.5) -- (5.5, 3.5);
    
\end{tikzpicture}
\caption{$U_i(x,u,v,w)$}
\label{U fig}
\end{figure}
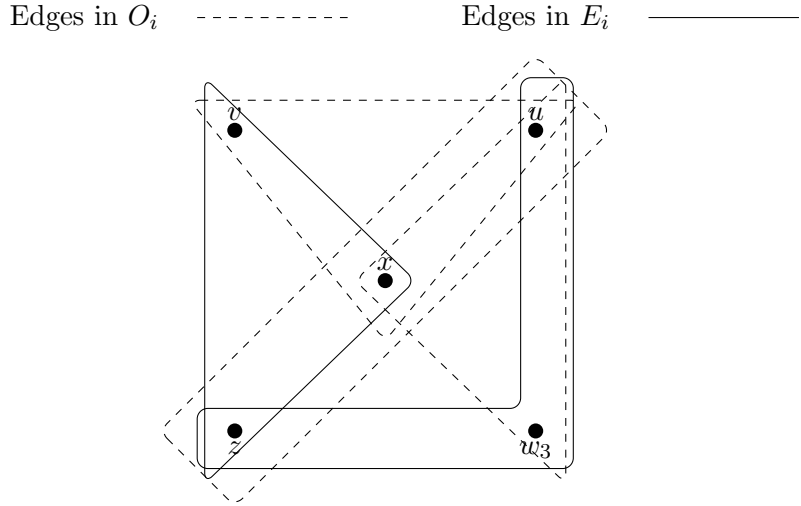

Note that $S_i(x,uv)$ denotes a set of vertices which have one open edge and one closed edge with the pairs $xu$ and $xv$, and $\hat{S}_i(x,uv)$ describes the corresponding set of open edges. Thus $|S_i(x,uv)| = |\hat{S}_i(x,uv)|$.

Given a set $F$ of edges, and vertices $u,v$, let $N_F(uv)=\{w: uvw \in F\}$ be the set of vertices that form an edge of $F$ with $u$ and $v$. Also given a vertex $x$, two vertex subsets $A, B \subseteq [N] \setminus \{x\}$ and a set $F$ of edges, let $F(A,B,x) := \{xuv \in F : u \in A \text{ and } v \in B\}$. 
Define the events
\begin{equation}
\label{eqn: event N_i}
    \mathcal{N}_i = \{|N_{O_i}(vx)| \leq q_i N \text{ and } |N_{\Gamma_i}(vx)| \leq 2q_{i-1}\sigma \sqrt{N} \text{ for all } x, v \in [N]\}
\end{equation}
\begin{equation}
\label{eqn: event P_i}
\mathcal{P}_i = \{|R_{i}(x,uv)| \leq q_i^2 N \text{, }|S_{i}(x,uv)| \leq 2q_i\pi_i\sqrt{N}\text{, }|T_{i}(x,uv)| \leq i (\log N)^9 \text{ for all } x,u,v \in [N]\}
\end{equation}
\begin{equation}
    \label{eqn: event P_i^+}
    \mathcal{P}_i^+ = \{U_i(x,u,v,w) \leq i (\log N)^{9} \text{ for all } x,u,v,w \in [N]\}
\end{equation}

By (\ref{Psi bound}), $\Psi^2(x) \sim (1/3)\log x$ as $x \rightarrow \infty$. Hence $$q_I = \Psi'(I\sigma) = e^{-3\Psi^2 (I\sigma)} = \Theta((I\sigma)^{-1}) = \Theta(N^{-\beta}\log^2 n).$$ Define constants
\begin{equation}
    \label{eqn: s and tau def}
s:= n\sigma^4 q_I^2 = \Theta\left(\frac{n N^{-2\beta}}{\log ^4N}\right)\qquad \hbox{ and } \qquad \tau_i := 1 - \frac{\delta \pi_i}{2\pi_I}.
\end{equation}

In particular note that since $\pi_I \geq \pi_i$ then $\tau_i \geq 1 - \frac{\delta}{2}$.

Define the following events:
\begin{equation}
    \label{eqn: event N^+ def}
    \mathcal{N}_i^+ := \left\{|N_{\Gamma_i}(vx) \cap A| \leq p |A| (1 + N^{\frac{1}{4} + \beta}) \text{ for all } A \subseteq [N], v \in [N], x \in [N]\right\}
\end{equation}
\begin{equation}
    \label{eqn: event Q^+}
\mathcal{Q}_i^+ = \left\{|O_i(A,B,x)| \leq q_i|A| |B| \text{ for all disjoint } A, B \in {[N]\choose \ge s}, x \in [N] \setminus A \cup B\right\}
\end{equation}
\begin{equation}
    \label{eqn: event Q_i def}
    \mathcal{Q}_i := \left\{\tau_i q_i |A||B| \leq |O_i(A,B,x)| \leq q_i |A||B| \text{ for all disjoint } A, B \in {[N]\choose n}, x \in [N] \setminus A \cup B\right\}.
\end{equation}

We will be interested in the intersections of these events and hence we define
\begin{equation}
    \label{eqn: event G_i}
    \notag
\mathcal{G}_i = \mathcal{N}_i \cap \mathcal{P}_i \cap \mathcal{P}^+ \cap \mathcal{Q}_i^+ \cap \mathcal{Q}_i \cap \mathcal{N}_i^+
\end{equation}
\begin{equation}
    \label{eqn: event G_<= i}
    \notag
    \mathcal{G}_{\leq i} = \bigcap_{j=0}^{i} \, \mathcal{G}_j.
    \end{equation}

Recall that $H=H_I$ and $\rho=\sqrt{\beta (\log N)/3N}$. Define the event 
\begin{equation}
    \label{eqn: Event T}
\mathcal{T}:= \{e_{H, x}(A,B) = (1 \pm \delta) \rho n^2 \text{ for all disjoint $n$-sets } A, B \subseteq V(H), x \in V(H) \setminus (A \cup B)\}
\end{equation}

We note that heuristically the number of edges we pick for $e_{H,x}(A,B)$ at each stage is $pq_i|A||B| = pq_in^2$ and
\begin{equation}
    \label{eqn: rho heuristic}
    \notag
\sum_{i=1}^I p q_i = \sum_{i=1}^I \frac{q_i \sigma}{\sqrt N} \approx \frac{\pi_I}{\sqrt{N}} \approx\sqrt{\frac{\beta \log N}{3N}} = \rho
\end{equation}
where we used the definition of $\pi_i$ from (\ref{eqn: pi def}) for the third relation and the fact that $\pi_i \approx \Psi(i\sigma) \approx \sqrt{(1/3)\log(\sqrt{3}i\sigma)}$ from Lemma \ref{q and pi bounds} and $I= \lceil N^{\beta} \rceil$ for the fourth relation.
Thus, ignoring the error parameters, we expect that if $\mathcal{Q}_i$ holds for all $i \le I$, then $\mathcal{T}$ holds as well.

The events $\mathcal{G}_i$ and $\mathcal{G}_{\leq i}$ are  good events for the algorithm at step $i$ and for all events up to and including step $i$ respectively. Our main result is that the events $\mathcal{T} \cap \mathcal{G}_{\leq I}$ hold with high probability, which then means the final $K_4^-$-free hypergraph $H_I$ at the end of the process will have all of the desired properties to prove Theorem \ref{main_theorem}.

\begin{lemma}
\label{main_lemma}
For $\delta, \beta, C$ as in the statement of Theorem \ref{main_theorem}, we have that $\mathbb{P}(\mathcal{T} \cap \mathcal{G}_{\leq I}) \geq 1 - N^{-\omega(1)}$.
\end{lemma}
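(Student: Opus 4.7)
The plan is to prove the lemma by induction on the step index $i$: I would show that, conditional on $\mathcal{G}_{\le i}$ and on the history $\mathcal{F}_i$, each of the six component events making up $\mathcal{G}_{i+1}$ fails with conditional probability at most $N^{-\omega(1)}$, so a union bound over the $I = \lceil N^{\beta} \rceil$ steps still leaves an overall failure probability of $N^{-\omega(1)}$; the event $\mathcal{T}$ is then deduced once $\mathcal{G}_{\le I}$ is known. The key structural fact exploited throughout is that, conditional on $\mathcal{F}_i$, the sets $\Gamma_{i+1}$ and $Y_{i+1}$ are produced by independent Bernoulli trials across the edges of $O_i$, so each sub-event of $\mathcal{G}_{i+1}$ reduces to a concentration question for a sum of Bernoulli indicators with at worst mild dependencies.

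For the upper-bound events $\mathcal{N}_{i+1}, \mathcal{P}_{i+1}, \mathcal{P}_{i+1}^+$ and $\mathcal{N}_{i+1}^+$, I would express each count -- for instance $|N_{\Gamma_{i+1}}(vx)|$, $|T_{i+1}(x,uv)|$, $|U_{i+1}(x,u,v,w)|$ or $|N_{\Gamma_{i+1}}(vx) \cap A|$ -- as a sum of Bernoulli indicators and apply a Chernoff bound, using the inductive hypothesis to estimate the mean via the bounds on $|R_i|, |S_i|, |T_i|, |U_i|$ and $|N_{O_i}|$ furnished by $\mathcal{G}_{\le i}$. Upper bounds at step $i+1$ on quantities defined over $O_{i+1}$ are automatic once they held at step $i$ since $O_{i+1} \subseteq O_i$, so only the quantities built from $\Gamma_{i+1}$ need fresh concentration. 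The most delicate of these is $\mathcal{N}_{i+1}^+$, whose statement is uniform over the exponentially many sets $A \subseteq [N]$; here the factor $(1 + N^{1/4+\beta})$ in the bound is deliberately slack enough that a large-deviation Chernoff, organized by $|A|$, produces a failure probability small enough to absorb the $2^N$ union bound cost.

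For $\mathcal{Q}_{i+1}^+$ and especially $\mathcal{Q}_{i+1}$, I would fix disjoint subsets $A, B$ and an external vertex $x$ and track the sequence $|O_j(A,B,x)|$ over $0 \le j \le i+1$ as a (super/sub)martingale against the deterministic trajectory $q_j |A||B|$. The one-step conditional drift is computed by imitating the heuristic that leads to $\mathbb{P}(e \in O_{i+1} \mid e \in O_i) \approx 1 - 6 q_i \pi_i \sigma$, with the approximations replaced by two-sided bounds derived from the other inductively held goodness events; in particular, $\mathcal{N}_i^+$ is precisely what controls how many edges of $\Gamma_{i+1}$ intersect the set $A$, and thereby governs the correction terms coming from $C_{i+1}^1, C_{i+1}^2$ and $Y_{i+1}$. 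Freedman's inequality applied to the resulting martingale, combined with a union bound over the $\binom{N}{n}^2 \le \exp(O(n \log(N/n)))$ pairs $(A,B)$ and the $N$ choices of $x$, then yields the required concentration. The main obstacle, as I see it, is the \emph{lower} bound half of $\mathcal{Q}_i$: the factor $\tau_i = 1 - \delta \pi_i/(2\pi_I)$ constrains the relative deviation of $|O_i(A,B,x)|$ to at most $\delta \pi_i/(2\pi_I)$ at every step, which is very tight and forces the one-step drift to be matched essentially exactly rather than merely bounded above -- this demands precise two-sided handling of the correction terms involving $|\hat{S}_i(e)|$ and $|Y_{i+1}|$, which is exactly the place where the new event $\mathcal{N}_i^+$ flagged in the introduction does its work. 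Once $\mathcal{G}_{\le I}$ is established, $\mathcal{T}$ follows quickly: at step $i+1$ the number of edges added to $H_{i+1}$ of the form $xab$ with $a \in A$, $b \in B$ is approximately $p q_i |A||B|$ minus the contribution of $D_{i+1}$, which the $\mathcal{P}$-events show to be of lower order; summing over $i$ and using $\sum_{i < I} \sigma q_i \approx \pi_I \approx \sqrt{(\beta \log N)/3}$ yields $e_{H,x}(A,B) = (1 \pm \delta)\rho n^2$.
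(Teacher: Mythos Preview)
Your overall architecture---prove $\mathbb{P}(\neg\mathcal{G}_{i+1}\mid\mathcal{G}_{\le i})\le N^{-\omega(1)}$ for each component event, union bound over the $I$ steps, then derive $\mathcal{T}$ from $\mathcal{G}_{\le I}$---is exactly the paper's Lemma~\ref{secondary_lemma} reduction, and your remarks about $\mathcal{N}_{i+1}^+$ and the lower bound in $\mathcal{Q}_{i+1}$ being the delicate points are on target.

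However, there is a genuine gap. Your sentence ``Upper bounds at step $i+1$ on quantities defined over $O_{i+1}$ are automatic once they held at step~$i$ since $O_{i+1}\subseteq O_i$'' is wrong, and the paper devotes the bulk of its effort to exactly those quantities. The targets at step $i+1$ are $|N_{O_{i+1}}(vx)|\le q_{i+1}N$ and $|R_{i+1}(x,uv)|\le q_{i+1}^2 N$, and since $q_{i+1}<q_i$ these are \emph{strictly tighter} than the step-$i$ bounds; the monotonicity $O_{i+1}\subseteq O_i$ gives you nothing here. You must show that $|N_{O_{i+1}}(vx)|$ and $|R_{i+1}(x,uv)|$ have \emph{decreased} by roughly the correct multiplicative factor $q_{i+1}/q_i$, which requires a one-step bounded-differences argument (the paper's Theorem~\ref{chebyshev's inequality varraint}) together with Lemma~\ref{probability C^1 or Y}. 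The case of $|S_{i+1}(x,uv)|$ is even further from automatic: $S_{i+1}$ is \emph{not} contained in $S_i$, because a vertex $w\in R_i(x,uv)$ migrates into $S_{i+1}(x,uv)$ whenever exactly one of $xuw,xvw$ lands in $\Gamma_{i+1}$. The paper handles this via the decomposition $|S_{i+1}|\le S^1_{i+1}+S^2_{i+1}$ and a nontrivial Lipschitz analysis that invokes both $T_i$ and the new quantity $U_i$ (this is precisely why $\mathcal{P}_i^+$ is introduced).

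Two smaller remarks. First, for $\mathcal{Q}_{i+1}$ the paper does not run a multistep Freedman martingale; it performs a one-step analysis, breaking $O_i\setminus O_{i+1}$ into the contributions of $C^1_{i+1}\cup Y_{i+1}$, $C^2_{i+1}$ and $\Gamma_{i+1}$ (Lemma~\ref{Q event lemma}), with the $C^2_{i+1}$ piece requiring a further decomposition indexed by the ``heavy'' vertex set $W(A,B,x,i)$ and an application of Theorem~\ref{Chernoff variant limited overlap}. Second, for $\mathcal{T}$ your sketch ``the contribution of $D_{i+1}$ is of lower order by the $\mathcal{P}$-events'' hides real work: $D_{i+1}$ is a \emph{maximal} disjoint family inside $B_{i+1}$, and bounding the number of edges removed uses Krivelevich's inequality (Theorem~\ref{maximal subset bound}) rather than a direct Chernoff.
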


In order to prove Lemma \ref{main_lemma}, we will prove that the probability we are not in the event $\mathcal{G}_{i+1}$ conditioned on being in the event $\mathcal{G}_i$ is exponentially small, and the probability of being in the event $\neg\mathcal{T} \cap \mathcal{G}_{\le I}$ is also exponentially small, such that when we multiply over the number of steps $I = N^{\beta}$ we still get a small probability. More formally, we will prove the following Lemma.

\begin{lemma}
\label{secondary_lemma}
Under the setup for Theorem \ref{main_theorem}, for all  $i \in [I-1]$,
$$\mathbb{P}(\neg \mathcal{G}_{\leq i+1} | \mathcal{G}_{\leq i}) \leq N^{-\omega(1)} \qquad \text{ and } \qquad \mathbb{P}(\neg \mathcal{T} \cap \mathcal{G}_{\leq I}) \leq N^{-\omega(1)}.$$
\end{lemma}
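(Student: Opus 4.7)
The plan is to handle both statements via step-by-step concentration: conditioning on $\mathcal{F}_i$ and $\mathcal{G}_{\le i}$, show each of the six constituent events of $\mathcal{G}_{i+1}$ fails with probability $N^{-\omega(1)}$, and separately show the $\mathcal{T}$ count concentrates around its predicted value $\rho n^2$ over the $I$ steps. The crucial structural fact is that, conditioned on $\mathcal{F}_i$, the set $\Gamma_{i+1}$ is an independent $p$-biased subset of $O_i$ and $Y_{i+1}$ an independent $\hat p_{e,i}$-biased subset (with $\hat p_{e,i}\le q_i$ by \eqref{p hat < q}), so every one-step increment is a sum of independent Bernoullis. For each such sum I would (a) compute the conditional mean accurately via the heuristics in Section~\ref{sec:traj}, invoking $\mathcal{G}_{\le i}$ to rigorously bound the relevant neighborhood and co-neighborhood sizes, and (b) apply Chernoff (for small-mean sums) or Bernstein (when the variance must be tracked separately) to get a deviation probability that beats the subsequent union bound.

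For $\mathcal{N}_{i+1}$, $|N_{\Gamma_{i+1}}(vx)|$ is a $p$-biased sum over $|N_{O_i}(vx)|\le q_iN$ edges with mean $\le q_i\sigma\sqrt N$, and the shrinkage of $|N_{O_{i+1}}(vx)|$ is handled by standard Chernoff. For $\mathcal{N}_{i+1}^+$, fixing $A,v,x$, the quantity is a sum of $\le q_i|A|$ independent Bernoulli$(p)$ variables; Chernoff with the generous deviation factor $N^{1/4+\beta}$ easily beats the $2^N$ union bound over subsets $A$. For $\mathcal{P}_{i+1}$ and $\mathcal{P}_{i+1}^+$, the increments of $|T_i|,|R_i|,|S_i|,|U_i|$ are sums of Bernoullis indexed by an additional vertex with conditional means bounded using $\mathcal{G}_{\le i}$; since the target bounds grow by $(\log N)^9$ per step while one-step means are polylogarithmic, Bernstein supplies ample slack. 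The delicate events are $\mathcal{Q}_{i+1}^+$ and $\mathcal{Q}_{i+1}$: for fixed $(A,B,x)$ with $|A|=|B|=n$, $|O_{i+1}(A,B,x)|$ is a sum of $|O_i(A,B,x)|$ Bernoullis whose per-edge success probability $\mathbb{P}(e\in O_{i+1}\mid \mathcal{F}_i,\, e\in O_i) = (1-\hat p_{e,i})(1-p)^{1+|\hat S_i(e)|}$ is, by the very design of $\hat p_{e,i}$ and because $|\hat S_i(e)|\approx 6\sqrt N q_i\pi_i$ under $\mathcal{G}_{\le i}$, essentially the same value $\approx q_{i+1}/q_i$ for every $e$. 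A Bernstein bound at scale $(1\pm\delta/(2I))$ per step telescopes to the $(1\pm\delta/2)$ required by $\mathcal{Q}_I$, and the union bound $\binom{N}{n}^2 N \le N^{2n+1}$ is absorbed because the hypothesis $C > D_0/(\delta^2\sqrt\beta)$ forces the Bernstein exponent, of order $\delta^2 q_i n^2/I^2$, to exceed $2n\log N$.

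For the $\mathcal{T}$ half of the lemma, write $e_{H_I,x}(A,B) = \sum_{i=1}^I|\Gamma_i(A,B,x)| - \sum_{i=1}^I|\Gamma_i(A,B,x)\setminus H_i|$. The second sum counts edges killed through membership in $D_i$ coming from $B^2_i\cup B^3_i$, which under $\mathcal{P}_i\cap \mathcal{P}_i^+$ is of lower order. The first sum has conditional mean $\sum_i p\,|O_{i-1}(A,B,x)|$, which under $\mathcal{Q}_{\le I}$ equals $(1\pm\delta/2)\rho n^2$ by the heuristic displayed immediately before the lemma statement. A Freedman-type martingale inequality applied step by step gives concentration tight enough, after union bounding over $\le N^{2n+1}$ triples $(A,B,x)$, to yield $\mathbb{P}(\neg\mathcal{T}\cap\mathcal{G}_{\le I}) \le N^{-\omega(1)}$. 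The main obstacle is the $\mathcal{Q}$ analysis, which demands simultaneously exponential-in-$n^2$ concentration and per-step accuracy $1\pm O(\delta/I)$; this is only achievable because $\hat p_{e,i}$ flattens the conditional survival probability across edges and because $\mathcal{N}_i^+$ bounds the edge-to-edge variation. A secondary obstacle, specific to the hypergraph setting, is controlling the quantity $U_i$ in $\mathcal{P}_i^+$, which has no graph analogue: its Bernstein analysis will have to use both $\mathcal{P}_i$ and $\mathcal{Q}_i^+$ to bound the variance of the one-step increment.
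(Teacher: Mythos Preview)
Your high-level outline matches the paper's, but there is a genuine gap in the $\mathcal{Q}_{i+1}$ analysis (and the same issue arises for $|N_{O_{i+1}}(vx)|$, $|R_{i+1}|$, $|S_{i+1}|$). The indicators $\mathbbm{1}_{e\in O_{i+1}}$ for $e\in O_i(A,B,x)$ are \emph{not} independent Bernoullis: a single edge $f\in\Gamma_{i+1}$ simultaneously pushes every $e$ with $f\in\hat S_i(e)$ into $C^1_{i+1}$, so a Bernstein bound on the sum is unavailable. The paper instead writes $X=\sum_{e}\mathbbm{1}_{e\notin C^1_{i+1}\cup Y_{i+1}}$ as a function of the truly independent variables $(\mathbbm{1}_{f\in\Gamma_{i+1}},\mathbbm{1}_{f\in Y_{i+1}})_{f\in O_i}$ and applies a McDiarmid/Warnke bounded-differences inequality (Theorem~\ref{chebyshev's inequality varraint}). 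The crux is then bounding the Lipschitz constant $\Delta_f$: how many $e\in O_i(A,B,x)$ can be closed by flipping a single $f$. For the $\mathcal{Q}^+$ event the paper shows $\Delta_f\le 40p|A|N^{1/4+\beta}I$, and this bound is precisely what the event $\mathcal{N}^+$ is designed to supply---not the ``edge-to-edge variation'' in survival probabilities you describe, which the stabilization $\hat p_{e,i}$ already handles at the level of expectation.

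Moreover, your survival probability $(1-\hat p_{e,i})(1-p)^{1+|\hat S_i(e)|}$ ignores the $C^2_{i+1}$ contribution to $O_i\setminus O_{i+1}$, and you do not address it elsewhere. In the paper this is the self-declared ``most difficult part of the proof'': because $e\in C^2_{i+1}$ is triggered by a \emph{pair} of edges landing in $\Gamma_{i+1}$, the effect of flipping one $f=xw\cdot$ on $|O_i(A,B,x)\cap C^2_{i+1}|$ can be as large as $|N_{\Gamma_{i+1}}(xw)\cap(A\cup B)|$, a random quantity with no useful deterministic bound. The paper's fix is to split off the heavy vertices $W=\{w:|N_{\Gamma_{i+1}}(xw)\cap(A\cup B)|\ge z\}$, apply a Warnke upper-tail inequality with bounded overlaps (Theorem~\ref{Chernoff variant limited overlap}) on the light part $\hat X_2$, and control the heavy contributions $X_2^{AB},X_2^A,X_2^B$ deterministically via a sunflower-type argument (Lemma~\ref{Intersecting Sets Lemma}) that exploits $|T_{i+1}(x,w_1w_2)|\le I\log^9 N$ from $\mathcal{P}_{i+1}$. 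Without these two ingredients the concentration for $|O_{i+1}(A,B,x)|$ does not beat the $N^{2n}$ union bound. (Your $\mathcal{T}$ sketch is broadly fine; the paper uses stochastic domination by binomials for $X(A,B,x)$ and Krivelevich's inequality, Theorem~\ref{maximal subset bound}, for the deleted edges $Y(A,B,x)$, but a martingale route would also work there.)
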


\section{Concentration bounds}

Throughout the paper we will use the following two standard bounds.
The following theorem is a well known version of the standard Chernoff bound.

\begin{theorem} [\cite{McDiarmid1998, Warnke}]
\label{standard chernoff}
    Let $(X_{\alpha})_{\alpha \in J}$ be a finite set of independent $(0,1)$ random variables and let $X = \sum_{\alpha \in J} X_{\alpha}$. Then if $\lambda = \mathbb{E}(X)$ $$\mathbb{P}(X \geq \lambda + t) \leq \exp \left(-\frac{t^2}{2\lambda}\right)$$
    $$\mathbb{P}(X \leq \lambda - t) \leq \exp \left(-\frac{t^2}{2(\lambda + t)} \right)$$
\end{theorem}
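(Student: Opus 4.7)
The plan is to use the classical exponential-moment (Chernoff--Cram\'er) technique. For any $\theta > 0$, Markov's inequality applied to $e^{\theta X}$ gives
\[
\mathbb{P}(X \geq \lambda + t) \leq e^{-\theta(\lambda+t)}\,\mathbb{E}[e^{\theta X}],
\]
and independence of the $X_\alpha$ together with $\mathbb{E}[e^{\theta X_\alpha}] = 1 + p_\alpha(e^\theta - 1) \leq e^{p_\alpha(e^\theta - 1)}$ (where $p_\alpha := \mathbb{E} X_\alpha$) produces the product bound $\mathbb{E}[e^{\theta X}] \leq \exp(\lambda(e^\theta - 1))$, using $\sum_\alpha p_\alpha = \lambda$. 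Combining these two inequalities gives, for every $\theta > 0$,
\[
\mathbb{P}(X \geq \lambda + t) \leq \exp\!\bigl(\lambda(e^\theta - 1) - \theta(\lambda+t)\bigr),
\]
and a symmetric calculation with $e^{-\theta X}$ (still $\theta > 0$) produces the lower-tail master bound $\mathbb{P}(X \leq \lambda - t) \leq \exp\!\bigl(\lambda(e^{-\theta} - 1) + \theta(\lambda-t)\bigr)$.

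The next step is to optimize in $\theta$. A short calculus exercise shows that the optimal values are $\theta^* = \ln(1 + t/\lambda)$ for the upper tail and $\theta^* = -\ln(1 - t/\lambda)$ for the lower tail, yielding the sharp Cram\'er bounds
\[
\mathbb{P}(X \geq \lambda+t) \leq e^{-\lambda\,\varphi(t/\lambda)}, \qquad \mathbb{P}(X \leq \lambda - t) \leq e^{-\lambda\,\psi(t/\lambda)},
\]
where $\varphi(u) := (1+u)\ln(1+u) - u$ and $\psi(u) := (1-u)\ln(1-u) + u$.

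To conclude, I would convert these to the two displayed quadratic-in-$t$ forms via standard elementary majorants on $\varphi$ and $\psi$ (provable by comparing Taylor expansions or equivalently by integrating derivative inequalities), which after rescaling $u = t/\lambda$ and multiplying by $\lambda$ produce the stated denominators $2\lambda$ and $2(\lambda+t)$ respectively. No probabilistic input beyond Markov's inequality, independence, and $X_\alpha \in \{0,1\}$ is used, so there is no real obstacle here -- the only work is the bookkeeping needed to match the precise algebraic form of the denominators, which is exactly the textbook material recorded in the cited surveys of McDiarmid and Warnke.
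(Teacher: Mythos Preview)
Your Chernoff--Cram\'er derivation is correct up through the optimized bounds
\[
\mathbb{P}(X \geq \lambda+t) \leq e^{-\lambda\,\varphi(t/\lambda)}, \qquad \mathbb{P}(X \leq \lambda - t) \leq e^{-\lambda\,\psi(t/\lambda)},
\]
with $\varphi(u) = (1+u)\ln(1+u) - u$ and $\psi(u) = (1-u)\ln(1-u) + u$. The gap is in your final sentence, where you assert that ``standard elementary majorants'' on $\varphi$ and $\psi$ yield the stated denominators $2\lambda$ and $2(\lambda+t)$. For the upper tail this would require $\varphi(u) \geq u^2/2$, and that inequality is \emph{false}: since $\varphi''(u) = 1/(1+u) \leq 1$ with $\varphi(0)=\varphi'(0)=0$, one has $\varphi(u) \leq u^2/2$ for all $u \geq 0$ (indeed $\varphi(u) = u^2/2 - u^3/6 + \cdots$). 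Hence $e^{-\lambda\varphi(t/\lambda)} \geq e^{-t^2/(2\lambda)}$, the wrong direction. A single Bernoulli$(p)$ with small $p$ already violates the printed upper-tail bound.

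What the Cram\'er method actually gives is $\varphi(u) \geq u^2/(2(1+u))$ and $\psi(u) \geq u^2/2$, i.e.\ denominator $2(\lambda+t)$ for the \emph{upper} tail and $2\lambda$ for the \emph{lower} tail --- the reverse of what is printed. The paper does not supply its own proof (the result is quoted from the cited surveys of McDiarmid and Warnke, where the bounds appear in this corrected form), so there is nothing to compare against; but your ``only bookkeeping remains'' claim glosses over an inequality that cannot be established, because the two displayed denominators in the statement are in fact interchanged.
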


The next theorem of Mcdiarmid~\cite{McDiarmid1989, McDiarmid1998}, and Warnke~\cite{Warnke}  bounds a function of $0, 1$ random variables when the differences between two inputs of the function are bounded when the inputs only differ in one place.

\begin{theorem} [\cite{McDiarmid1989, McDiarmid1998, Warnke}]
    \label{chebyshev's inequality varraint}
    Let $(\xi_{\alpha})_{\alpha \in J}$ be a finite set of independent $0,1$ variables. Let $f: \{0,1\}^J \to \mathbb{R}$ be a decreasing function such that there exists $(c_{\alpha})_{\alpha \in J}$ so for all $z, z' \in \{0,1\}^J$ with $z_{\beta} = z'_{\beta}$ for all $\beta \neq \alpha$ we have $|f(z) - f(z')| \leq c_{\alpha}$. Set $\lambda := \sum_{\alpha \in J} c_{\alpha}^2 \mathbb{P}(\xi_{\alpha} = 1)$ and $X := f((\xi_{\alpha})_{\alpha \in J})$. For all $t \geq 0$,
        \begin{equation}
    \mathbb{P}(X \geq \mathbb{E}(X) + t) \leq \exp \left(-\frac{t^2}{2\lambda}\right).
    \end{equation}
    Further, if $C := \max_{\alpha \in J} c_{\alpha}$ and if we drop the decreasing assumption on $f$, then
  \begin{equation}
        \mathbb{P}(X \leq \mathbb{E}(X) - t) \leq \exp\left(-\frac{t^2}{2(\lambda + Ct)}\right)
    \end{equation}
\end{theorem}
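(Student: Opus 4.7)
The plan is to prove both tail bounds by the martingale/exponential-moment (Chernoff) method. Fix an arbitrary ordering of $J = \{1, \ldots, m\}$ and form the Doob martingale $Y_k = \mathbb{E}[X \mid \xi_1, \ldots, \xi_k]$, so $Y_0 = \mathbb{E}X$ and $Y_m = X$. Set $D_k = Y_k - Y_{k-1}$; the bounded-differences hypothesis gives $|D_k| \le c_k$ almost surely, and each $D_k$ has conditional mean $0$. Both inequalities reduce to controlling the conditional moment generating functions $\mathbb{E}[e^{\pm s D_k} \mid \mathcal{F}_{k-1}]$ and optimizing the Chernoff parameter $s$.

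For the upper tail I would exploit the decreasing assumption on $f$. Conditioning on $\mathcal{F}_{k-1}$, write $a = \mathbb{E}[f \mid \mathcal{F}_{k-1}, \xi_k = 1]$, $b = \mathbb{E}[f \mid \mathcal{F}_{k-1}, \xi_k = 0]$, and $p_k = \mathbb{P}(\xi_k = 1)$. Because $f$ is decreasing in its $k$th coordinate, $0 \le b - a \le c_k$. Consequently $D_k$ is a two-point centred random variable taking value $-(1-p_k)(b-a) \le 0$ with probability $p_k$ and value $p_k(b-a) \ge 0$ with probability $1 - p_k$; in particular $D_k \le p_k c_k$ (strictly smaller than the naive bound $c_k$) with conditional variance at most $p_k c_k^2$. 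A centred-Bernoulli MGF estimate then yields $\mathbb{E}[e^{s D_k} \mid \mathcal{F}_{k-1}] \le \exp(s^2 c_k^2 p_k / 2)$ for all $s \ge 0$. Multiplying across $k$ gives $\mathbb{E}[e^{s(X - \mathbb{E}X)}] \le \exp(s^2 \lambda / 2)$, and Chernoff at $s = t/\lambda$ delivers the first inequality.

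For the lower tail the decreasing hypothesis is dropped, so the one-sided small bound on $D_k$ is lost and only the deterministic $|D_k| \le c_k$ is available. The conditional variance is still at most $p_k c_k^2$ (since flipping $\xi_k$ contributes nontrivially only through a Bernoulli-weighted difference), so I would invoke a Bernstein-type MGF bound for mean-zero variables with variance $\sigma^2$ and absolute range $C$: $\mathbb{E}[e^{-sD_k} \mid \mathcal{F}_{k-1}] \le \exp\!\bigl(s^2 p_k c_k^2 / (2(1 - sC/3))\bigr)$ for $s \in [0, 3/C)$. Summing over $k$ and taking the Chernoff optimum (essentially $s = t/(\lambda + Ct)$, possibly after absorbing constants into the $Ct/3$ versus $Ct$ discrepancy) then gives the claimed $\exp(-t^2/(2(\lambda + Ct)))$ estimate.

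The main technical obstacle is the centred-Bernoulli MGF inequality in the upper-tail step, namely that the two-point random variable taking values $-(1-p)c$ and $pc$ with respective probabilities $p$ and $1-p$ satisfies $\mathbb{E}[e^{sD}] \le \exp(s^2 p c^2 / 2)$ for all $s \ge 0$. This is an elementary but nontrivial calculus inequality: one must delicately exploit the asymmetric support (so that the positive value is only $pc$ and occurs with probability $1 - p$) rather than merely invoke Hoeffding's sub-Gaussian constant $(b-a)^2/8 = c^2/8$, which lacks the desired $p$ factor. A clean way to verify it is to write $h(u) = \exp(qu + u^2 p / 2) - p - q e^u$ with $u = sc$ and $q = 1 - p$, and check that $h(0) = h'(0) = 0$ with $h''(u) \ge 0$ for $u \ge 0$. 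Since the inequality is established in the cited works of McDiarmid and Warnke, in practice I would cite those sources and focus only on the martingale reduction and on verifying the decreasing hypothesis in each later application of the theorem in Section~\ref{ss}.
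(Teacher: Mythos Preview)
The paper does not prove this theorem; it is stated with citations to McDiarmid and Warnke and used throughout as a black box. Your Doob-martingale reduction, with the asymmetric centred-Bernoulli MGF bound (exploiting that the decreasing hypothesis forces $D_k \le p_k c_k$) for the upper tail and a Bernstein/Freedman variance-plus-range estimate for the lower tail, is exactly the argument found in those references, so your proposal is correct and there is no in-paper proof to compare against.
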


The following Theorem which follows from Theorem 9 of Warnke~\cite{Warnke2017}, bounds the number of events which occur from a family where each has overlap with a limited number of events in the family.
\begin{theorem} [see~\cite{Warnke2017}]
    \label{Chernoff variant limited overlap}
    Let $\{\xi_i\}_{i \in O}$ be a family of independent variables with values  in $\{0,1\}$. Let $(Y_{\alpha})_{\alpha \in L}$ be a finite family of variables $Y_{\alpha} := \mathbbm{1}_{\xi_i = 1 \forall i \in \alpha}$ with $\sum_{\alpha \in L} \mathbb{E}(Y_{\alpha}) \leq \mu$. Define the function $f: 2^L \to \mathbb{R}$ by $f(J) = \max_{\beta \in J} |\alpha \in J: \alpha \cap \beta \neq \emptyset|$ for all $J \subseteq L$. Let $Z_C := \max \sum_{\alpha \in J} Y_{\alpha}$ with the maximum taken over $J \subseteq L$ with $f(J) \leq C$.
    Then for all $C, t \geq 0$, $$\mathbb{P}(Z_C \geq \mu + t) \leq \exp\left(-\frac{t^2}{2C(\mu + t)}\right)$$
\end{theorem}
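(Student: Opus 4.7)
The plan is to use the exponential moment method, combined with a structural simplification of $Z_C$ and a partitioning of the overlap graph into independent slices.

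First I would observe that, in any realization of the $\xi_i$'s, the optimal subfamily achieving $Z_C$ can be taken inside $L^* := \{\alpha \in L : Y_\alpha = 1\}$: removing the false $\alpha$'s from any candidate $J$ does not violate $f(J) \leq C$ and does not decrease $\sum_{\alpha \in J} Y_\alpha$. Hence $Z_C$ is simply the largest cardinality of a subfamily of $L^*$ whose overlap graph has maximum degree at most $C$. In particular, $\{Z_C \geq k\}$ coincides with the event that there is some $J \subseteq L^*$ with $|J| \geq k$ and $f(J) \leq C$.

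Next, for any fixed candidate $J$ with $f(J) \leq C$, the overlap graph on $J$ (with $\alpha \sim \beta$ iff $\alpha \cap \beta \neq \emptyset$) has maximum degree at most $C$, so greedy coloring partitions $J = J_1 \sqcup \cdots \sqcup J_{C+1}$ into classes of pairwise disjoint $\alpha$'s. Within each class the variables $(Y_\alpha)_{\alpha \in J_c}$ are mutually independent (they depend on disjoint blocks of the $\xi_i$'s), so a standard Chernoff bound handles each class, and the slices combine via H\"older or a product MGF argument.

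The main obstacle is the maximum over $J$: a naive union bound over all admissible subfamilies is catastrophic. The way around this is to verify a \emph{self-bounding}/Lipschitz property of $Z_C$ viewed as a functional of the $\xi_i$'s: flipping a single $\xi_i$ from $0$ to $1$ alters $Z_C$ by at most $1$, even though $i$ may lie in many $\alpha$'s, because two new $\alpha$'s sharing $i$ contribute to each other's degree in the overlap graph, and the $f \leq C$ constraint limits how many such $\alpha$'s can simultaneously enter an optimizer. A martingale or telescoping argument that reveals the $\xi_i$'s in an arbitrary order, combined with this Lipschitz bound and a variance proxy of order $C\mu$ (each true $\alpha$ has at most $C$ neighbors among the other truths that can block it), then yields a Bernstein-type MGF inequality. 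Markov's inequality and optimization in $\lambda$ deliver the stated tail bound.

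The hardest step is making the self-bounding analysis produce the linear dependence on $C$ in the variance proxy rather than $C^2$; this is what distinguishes the denominator $2C(\mu + t)$ from a cruder bound. Once that is in place, the passage from the exponential moment estimate to the Chernoff-type tail is routine.
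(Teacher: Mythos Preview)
The paper does not prove this theorem at all: it is quoted from Warnke~\cite{Warnke2017} and used as a black box. So there is no ``paper's own proof'' to compare against; your proposal is an attempt to supply an argument the authors deliberately outsourced.

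On the substance of your sketch, there is a genuine gap. Your central claim, that flipping a single $\xi_i$ from $0$ to $1$ changes $Z_C$ by at most~$1$, is false. Take $\alpha_j=\{1,j+1\}$ for $1\le j\le C$, set $\xi_2=\cdots=\xi_{C+1}=1$ and $\xi_1=0$. Then every $Y_{\alpha_j}=0$ and $Z_C=0$. Flipping $\xi_1$ to $1$ makes all $Y_{\alpha_j}=1$; since each $\alpha_j$ meets exactly the $C$ sets $\alpha_1,\dots,\alpha_C$, we have $f(\{\alpha_1,\dots,\alpha_C\})=C$, so this family is admissible and $Z_C=C$. Thus a single flip can move $Z_C$ by $C$, not $1$. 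Your own parenthetical (``the $f\le C$ constraint limits how many such $\alpha$'s can simultaneously enter an optimizer'') is correct, but the limit it gives is $C$, not $1$. With Lipschitz constant $C$, a bounded-differences or naive self-bounding argument produces a variance proxy of order $C^2\mu$, hence a denominator $2C^2(\mu+t)$ --- exactly the cruder bound you say you want to avoid.

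The route that actually yields the linear dependence on $C$ passes instead through the \emph{disjoint} count $W:=\max\{|J|:J\subseteq L^*,\ \alpha\cap\beta=\emptyset\text{ for distinct }\alpha,\beta\in J\}$. The coloring step you describe gives $Z_C\le C\,W$, and it is $W$ (not $Z_C$) that enjoys a genuine Lipschitz-$1$ bound: any two newly activated $\alpha$'s sharing the flipped index $i$ are non-disjoint, so at most one of them can augment a disjoint family. One then controls the upper tail of $W$ by a direct moment argument of the Krivelevich type (Theorem~\ref{maximal subset bound} here), together with a careful optimization; this is what Warnke does. Your outline conflates the Lipschitz behavior of $W$ with that of $Z_C$, and without separating the two you cannot get the factor $C$ rather than $C^2$.
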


Since our algorithm forms our final graph $H_I$ by removing a maximal set $D_{i+1}$ of bad sets of edges, we will use the following Theorem of Krivelevich~\cite{Krivelevich} to bound the number of edges removed.

\begin{theorem} [\cite{Krivelevich}]
\label{maximal subset bound}
    Let $Q$ be a finite set and let $(\xi_i)_{i \in Q}$ be a set of indicator random variables with $\mathbb{P}(\xi_i = 1) = p_i$ for all $i \in Q$. Let $(Q(\alpha))_{\alpha \in J}$ be a family of subsets of $Q$ where $J$ is a finite set. Let $X_{\alpha} = \prod_{i \in \alpha} \xi_{i}$ and let $X = \sum_{\alpha \in J} X_{\alpha}$. Let $$X_0 := \max \{m : \exists \alpha_1 ,..., \alpha_m \in J \text{ with } X_{\alpha} = 1 \text{ and } Q(\alpha_i) \cap Q(\alpha_j) = \emptyset, i \neq j\}.$$
    Then $$\mathbb{P}(X_0 \geq k) \leq \frac{\mathbb E(X)^k}{k!}.$$
\end{theorem}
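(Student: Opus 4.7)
The plan is a direct union-bound argument that converts the $Q$-disjointness condition into an independence statement. Observe that $\{X_0 \ge k\}$ is precisely the event that there exist indices $\alpha_1, \ldots, \alpha_k \in J$ whose $Q$-sets are pairwise disjoint and for which $X_{\alpha_1} = \cdots = X_{\alpha_k} = 1$. My first step would be to union-bound over ordered $k$-tuples, correcting by $k!$ for the overcounting of each unordered witness:
$$\mathbb{P}(X_0 \ge k) \;\le\; \frac{1}{k!}\sum_{(\alpha_1, \ldots, \alpha_k) \in J^k} \mathbbm{1}\bigl\{Q(\alpha_i) \cap Q(\alpha_j) = \emptyset \text{ for all } i \ne j\bigr\}\cdot \mathbb{P}\!\left(\bigcap_{i=1}^k \{X_{\alpha_i} = 1\}\right).$$

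The second step would exploit the hypothesis that the $(\xi_i)_{i \in Q}$ are independent. Whenever $Q(\alpha_1), \ldots, Q(\alpha_k)$ are pairwise disjoint, the random variables $X_{\alpha_1}, \ldots, X_{\alpha_k}$ are functions of disjoint sub-collections of the $\xi_i$, and are therefore mutually independent. Hence the joint probability factors as
$$\mathbb{P}\!\left(\bigcap_{i=1}^k \{X_{\alpha_i} = 1\}\right) \;=\; \prod_{i=1}^k \mathbb{P}(X_{\alpha_i}=1) \;=\; \prod_{i=1}^k \mathbb{E}(X_{\alpha_i}).$$

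To finish, I would drop the disjointness indicator (which is at most $1$) and rewrite the resulting unconstrained sum as a $k$-fold product:
$$\mathbb{P}(X_0 \ge k) \;\le\; \frac{1}{k!}\sum_{(\alpha_1, \ldots, \alpha_k) \in J^k} \prod_{i=1}^k \mathbb{E}(X_{\alpha_i}) \;=\; \frac{1}{k!}\left(\sum_{\alpha \in J} \mathbb{E}(X_\alpha)\right)^{\!k} \;=\; \frac{\mathbb{E}(X)^k}{k!}.$$

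I do not anticipate a serious obstacle here: the argument is just a union bound combined with the standard factorial overcounting trick and the independence of products of indicators supported on disjoint index sets. The only subtle point worth flagging is that discarding the disjointness indicator is essentially lossless for the purpose of this bound, since the $k!$ in the denominator already matches the ordered inflation, so no finer combinatorial accounting of overlapping witnesses is needed.
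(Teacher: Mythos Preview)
Your argument is correct and is precisely the classical proof of this lemma due to Krivelevich. The paper does not supply its own proof of this statement; it is quoted from \cite{Krivelevich} and used as a black box, so there is nothing to compare against beyond noting that your write-up matches the standard derivation.

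One small point worth recording: your second step relies on the $(\xi_i)_{i\in Q}$ being mutually independent, which the paper's statement of the theorem does not make explicit. Independence is of course the intended hypothesis (it is present in Krivelevich's original formulation, and in the paper's application the $\xi_e=\mathbbm{1}_{e\in\Gamma_{i+1}}$ are independent Bernoulli variables), so this is a minor omission in the paper rather than a gap in your proof.
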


\section{Proof of Lemma \ref{secondary_lemma}}
We will prove the statement about $\mathbb{P}(\neg \mathcal{G}_{\leq i+1} | \mathcal{G}_{\leq i})$ by proving each of the statements $$\mathbb{P}(\neg \mathcal{N}_{i+1} | \mathcal{G}_{\leq i}) \leq N^{-\omega(1)}$$
$$\mathbb{P}(\neg \mathcal{P}_{i+1} | \mathcal{G}_{\leq i}) \leq N^{-\omega(1)}$$
$$\mathbb{P}(\neg \mathcal{P}^+_{i+1} | \mathcal{G}_{\leq i}) \leq N^{-\omega(1)}$$
$$\mathbb{P}(\neg \mathcal{N}_{i+1}^+ | \mathcal{G}_{\leq i}) \leq N^{-\omega(1)}$$
$$\mathbb{P}(\neg \mathcal{Q}_{i+1}^+ | \mathcal{G}_{\leq i}) \leq N^{-\omega(1)}$$
$$\mathbb{P}(\neg \mathcal{Q}_{i+1} | \mathcal{G}_{\leq i}) \leq N^{-\omega(1)}$$
Before proceeding to the proofs, we prove two lemmas that will be used throughout the paper. In the lemmas below, we assume that $\mathcal{G}_{\leq i}$ holds.
\begin{lemma}
    \label{probability C^1 or Y}
    For all $e \in O_i$, we have that $\mathbb{P}(e \not\in C_{i+1}^1 \cup Y_{i+1}) - \frac{q_{i+1}}{q_i} \in [-7\sigma^{3/2}q_i, -5\sigma^{3/2}q_i]$.
\end{lemma}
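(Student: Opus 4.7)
The plan is to condition on $\mathcal{F}_i$, factorize using the conditional independence of $\Gamma_{i+1}$ and $Y_{i+1}$, verify that under $\mathcal{G}_{\leq i}$ the ``max'' implicit in the definition of $\hat{p}_{e,i}$ is attained by its nontrivial branch, and then carry out a careful Taylor expansion and compare against $q_{i+1}/q_i$ using Lemma \ref{q and pi bounds}.

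The first step is to observe that $\{e \notin C^1_{i+1}\}$ holds exactly when no edge of $\hat{S}_i(e) \subseteq O_i$ is selected into $\Gamma_{i+1}$, while $Y_{i+1}$ is formed by an independent collection of $\hat{p}_{e,i}$-coin flips. Conditioning on $\mathcal{F}_i$ then yields
$$\mathbb{P}\bigl(e \notin C^1_{i+1} \cup Y_{i+1} \,\big|\, \mathcal{F}_i\bigr) = (1-p)^{|\hat{S}_i(e)|}(1-\hat{p}_{e,i}) = (1-p)^{\max\{|\hat{S}_i(e)|,\, 6\sqrt{N}q_i(\pi_i+\sqrt{\sigma})\}}.$$
To see that the max equals $6\sqrt{N}q_i(\pi_i+\sqrt{\sigma})$, write $e=xuv$ and examine the three possible $K_4^-$'s on each $4$-set $\{x,u,v,w\}$: any $f \in \hat{S}_i(e)$ lies in one of the six sets $\hat{S}_i(x,uv)$, $\hat{S}_i(u,xv)$, $\hat{S}_i(x,vu)$, $\hat{S}_i(v,xu)$, $\hat{S}_i(u,vx)$, $\hat{S}_i(v,ux)$. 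Grouping these into three pairs by first argument and using $|\hat{S}_i(x,uv)| + |\hat{S}_i(x,vu)| = |S_i(x,uv)|$ (and analogously for the other two pairs) together with the $\mathcal{P}_i$ bound $|S_i(\cdot,\cdot)| \le 2q_i\pi_i\sqrt{N}$ gives
$$|\hat{S}_i(e)| \le |S_i(x,uv)|+|S_i(u,xv)|+|S_i(v,xu)| \le 6q_i\pi_i\sqrt{N} \le 6\sqrt{N}q_i(\pi_i+\sqrt{\sigma}),$$
so that $\mathbb{P}(e \notin C^1_{i+1} \cup Y_{i+1}) = (1-p)^{6\sqrt{N}q_i(\pi_i+\sqrt{\sigma})}$.

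Next comes the Taylor expansion. Setting $\alpha := 6\sqrt{N}q_i(\pi_i+\sqrt{\sigma})$, one has $\alpha p = 6\sigma q_i\pi_i + 6\sigma^{3/2}q_i$, which is $O(\sigma)$ by (\ref{q and pi bounds, q times pi less than 1}). Expanding $(1-p)^\alpha = \exp(\alpha \log(1-p))$ via $\log(1-p) = -p + O(p^2)$ and $e^{-y} = 1 - y + y^2/2 + O(y^3)$, keeping terms through order $\sigma^2$, yields
$$(1-p)^\alpha = 1 - 6\sigma q_i\pi_i - 6\sigma^{3/2}q_i + O(\sigma^2).$$
Meanwhile, (\ref{bound on q_i - q_i+1}) rearranges to $q_{i+1}/q_i = 1 - 6\sigma q_i\pi_i + O(\sigma^2 q_i)$. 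Subtracting cancels the $6\sigma q_i\pi_i$ term and produces
$$\mathbb{P}\bigl(e \notin C^1_{i+1} \cup Y_{i+1}\bigr) - \frac{q_{i+1}}{q_i} = -6\sigma^{3/2}q_i + O(\sigma^2).$$

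To conclude it suffices to check that $O(\sigma^2) \ll \sigma^{3/2}q_i$, i.e.\ $\sqrt{\sigma} = 1/\log N \ll q_i$. This is immediate from (\ref{eqn:sigmaqN}): $q_i \ge N^{-\beta+o(1)}/10 \gg 1/\log N$ since $\beta \le 1/100$. Hence the difference lies in $[-7\sigma^{3/2}q_i, -5\sigma^{3/2}q_i]$ for $N$ sufficiently large. The main technical obstacle will be the combinatorial upper bound on $|\hat{S}_i(e)|$: the six $\hat{S}_i$-slots in which an $f \in \hat{S}_i(e)$ may live must be paired correctly with the three $S_i$-quantities controlled by $\mathcal{P}_i$ to avoid double counting. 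Everything else is a careful but essentially routine Taylor analysis.
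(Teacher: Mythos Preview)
Your proof is correct and follows essentially the same approach as the paper: both compute $\mathbb{P}(e\notin C^1_{i+1}\cup Y_{i+1})=(1-p)^{6\sqrt{N}q_i(\pi_i+\sqrt{\sigma})}$ after verifying $|\hat{S}_i(e)|\le 6q_i\pi_i\sqrt{N}$ via $\mathcal{P}_i$, then expand and compare with $q_{i+1}/q_i$ using (\ref{bound on q_i - q_i+1}). The only cosmetic difference is that the paper uses the explicit bounds $1-ab\le(1-a)^b\le 1-ab+a^2b^2$ rather than a Taylor expansion with $O(\cdot)$ error terms; your more detailed decomposition of $\hat{S}_i(e)$ into six pieces paired into three $S_i$-quantities is a welcome clarification of a step the paper states in one line.
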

\begin{proof}
Let $e = xuv$ and first notice that since we are in the event $\mathcal{G}_{\leq i}$ then $\mathcal{P}_i$ holds and $|S_{i}(x,uv)| \leq 2q_i\pi_i \sqrt{N}$. Therefore, $$|\hat{S}_i(e)| \leq |S_i(x,uv)| + |S_i(u,xv)| + |S_i(v,xu)| \leq 6q_i\pi_i \sqrt{N},$$ so $6\sqrt{N}q_i(\pi_i + \sqrt{\sigma}) - |\hat{S}_i(e)| \geq 0$ and $\hat{p}_{e,i} = 1 - (1-p)^{6\sqrt{N}q_i(\pi_i + \sqrt{\sigma}) - |\hat{S}_i(e)|}$. Now we get
\begin{equation}
\label{eqn: Probability not in C^1 or Y}
\notag
\mathbb{P}(e \not \in C_{i+1}^1 \cup Y_{i+1}) = (1-p)^{|\hat{S}_i(e)|}(1-\hat{p}_{e,i}) = (1-p)^{6\sqrt{N}q_i(\pi_i + \sqrt{\sigma})}.
\end{equation}
Then using (\ref{eqn: sigma and p definition}) and $1-ab \leq (1-a)^b \leq 1 - ab + a^2b^2$ when $a \in [0,1]$ and $b \geq 2$ we get
\begin{equation}
\label{eqn: Probability not in C^1 or Y Bound1}
1 - 6 \sigma q_i(\pi_i + \sqrt{\sigma}) \leq \mathbb{P}(e \not\in C_{i+1}^1 \cup Y_{i+1}) \leq 1 - 6 \sigma q_i(\pi_i + \sqrt{\sigma}) + 36 \sigma^2 q_i^2 (\pi_i + \sqrt{\sigma})^2.
\end{equation}
Now since $|(q_{i+1} - q_i) + 6\sigma q_i^2 \pi_i| \leq 16\sigma^2q_i^2$ by Lemma \ref{q and pi bounds} (\ref{bound on q_i - q_i+1}), we can rearrange to get $$\left|\frac{q_{i+1}}{q_i} - (1 - 6\sigma q_i \pi_i)\right| \leq 16\sigma^2q_i.$$ Then (\ref{bound on q_i - q_i+1}) together with (\ref{eqn: Probability not in C^1 or Y}) gives
\begin{equation}
\label{eqn: Probability not in C^1 or Y Bound 2}
\notag
\frac{q_{i+1}}{q_i} - 6\sigma^{3/2}q_i - 16\sigma^2 q_i \leq \mathbb{P}(e \not\in C_{i+1}^1 \cup Y_{i+1}) \leq \frac{q_{i+1}}{q_i} - 6\sigma^{3/2}q_i + 16\sigma^2 q_i + 36 \sigma^2 q_i^2 (\pi_i + \sqrt{\sigma})^2.
\end{equation}
Applying Lemma \ref{q and pi bounds} (\ref{q and pi bounds, q times pi less than 1}) and $\sigma = \log^{-2} N$, this completes the proof.
\end{proof}

The next lemma is notable and particularly useful as  it applies to every subset $J$ of triples.
\medskip

\begin{lemma}
\label{S_e intersection J}
    Given $\mathcal{G}_{\leq i}$, then for any $J \subseteq \binom{[N]}{3}$ we have that $\sum_{e \in O_i} |\hat{S}_i(e) \cap J| \leq 6q_i\pi_i\sqrt{N} |J|$.
\end{lemma}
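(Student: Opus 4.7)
The plan is to exploit the symmetry of the relation ``$f \in \hat{S}_i(e)$'' and then apply the uniform bound on $|\hat{S}_i(e)|$ that was already derived in the proof of Lemma~\ref{probability C^1 or Y}.

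First, observe that $\hat{S}_i(e) \subseteq O_i$ for every $e$, so the sum
$$\sum_{e \in O_i} |\hat{S}_i(e) \cap J|$$
counts ordered pairs $(e,f)$ with $e,f \in O_i$, $f \in J$, and such that there exists $g \in E_i$ with $efg$ forming a $K_4^-$. Since this last condition is symmetric in $e$ and $f$ (the same triple $g \in E_i$ witnesses both $f \in \hat{S}_i(e)$ and $e \in \hat{S}_i(f)$), swapping the roles of $e$ and $f$ yields
$$\sum_{e \in O_i} |\hat{S}_i(e) \cap J| \;=\; \sum_{f \in O_i \cap J} |\hat{S}_i(f)|.$$

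Second, I will use the bound $|\hat{S}_i(f)| \leq 6 q_i \pi_i \sqrt{N}$ which holds for every $f \in O_i$ under $\mathcal{G}_{\leq i}$. This was essentially already established inside the proof of Lemma~\ref{probability C^1 or Y}: writing $f = xuv$, the definition of $\hat{S}_i$ gives the decomposition
$$|\hat{S}_i(f)| \le |S_i(x,uv)| + |S_i(u,xv)| + |S_i(v,xu)|,$$
and each term is at most $2 q_i \pi_i \sqrt{N}$ by the event $\mathcal{P}_i$, which is part of $\mathcal{G}_{\le i}$.

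Combining the two observations,
$$\sum_{e \in O_i} |\hat{S}_i(e) \cap J| \;=\; \sum_{f \in O_i \cap J} |\hat{S}_i(f)| \;\le\; |O_i \cap J| \cdot 6 q_i \pi_i \sqrt{N} \;\le\; 6 q_i \pi_i \sqrt{N}\,|J|,$$
which is the desired bound. There is no real obstacle here; the content of the lemma is simply the observation that a pointwise upper bound on $|\hat{S}_i(e)|$, together with symmetry, lifts to a global estimate on the weighted total over an \emph{arbitrary} index set $J$, a fact the authors want to isolate for later use (presumably when $J$ is an event-dependent set over which one cannot directly iterate).
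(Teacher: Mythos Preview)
Your proof is correct and follows essentially the same approach as the paper: both exploit the symmetry $f \in \hat{S}_i(e) \Leftrightarrow e \in \hat{S}_i(f)$ to swap the order of summation, and then apply the bound $|\hat{S}_i(f)| \le 6q_i\pi_i\sqrt{N}$ coming from $\mathcal{P}_i$. Your version is marginally cleaner in that you explicitly restrict to $f \in O_i \cap J$ and use equality in the swap, whereas the paper sums over all $f \in J$ with an inequality, but the argument is the same.
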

\begin{proof}
    Note that for all $e \in O_i$, if $f \in \hat{S}_i(e)$ then there is a $K_4^-$ which contains $e$ and $f$ which are both in $O_i$ and some edge $g \in E_i$, which means this same $K_4^-$ satisfies the requirements for $e \in \hat{S}_i(f)$. Thus 
    \begin{equation}
    \notag
        \sum_{e \in O_i} |\hat{S}_i(e) \cap J| = \sum_{f \in J}\sum_{e \in O_i} \mathbbm{1}_{f \in \hat{S}_i(e)} \leq \sum_{f \in J} \sum_{e \in O_i} \mathbbm{1}_{e \in \hat{S}_i(f)} = \sum_{f \in J} |\hat{S}_i(f)| \leq |J|(2q_i)\pi_i \sqrt{N},
    \end{equation}
where the last inequality comes from the event $\mathcal{P}_i$.
\end{proof}
\subsection{Bound on $\neg \mathcal{N}$}
Throughout this section all expectations and probability are conditioned on $\mathcal{F}_i$ and on the event $\mathcal{G}_{\leq i}$, and to improve readability, we will omit that in our notation in this subsection. For each concentration bound, we fix $v$ and $x$ till the very end  when we take a union bound over all pairs $v, x$. 

Clearly $\mathbb{E}(|N_{\Gamma_{i+1}}(vx)|) = p |N_{O_i}(vx)| \leq pq_iN =: \mu$ since we are in the event $\mathcal{G}_{\leq i}$, so $\mathcal{N}_i$ holds. Now by Theorem \ref{standard chernoff}
\begin{equation}
\label{eqn: Prob Gamma neighborhood bound}
\mathbb{P}(|N_{\Gamma_{i+1}}(vx)| \geq 2\mu) \leq \exp (\frac{-\mu^2}{2\mu}) \leq \exp(-\frac{\sigma \sqrt{N}q_i}{2}).
\end{equation}
Then by Lemma \ref{q and pi bounds} (\ref{eqn:sigmaqN}) and (\ref{eqn: Prob Gamma neighborhood bound}) we obtain $\mathbb{P}(|N_{\Gamma_{i+1}}(vx)| \geq 2\mu) \leq e^{-\frac{N^c}{3}} < N^{-\omega(1)}$ where the $c > 0$ comes from (\ref{q_i sqrt N estimate}). Since the number of pairs $v,x$ is $\binom{N}{2}$,  a union bound completes the proof.

Next we consider $|N_{O_i}(vx)|$. First we will give an upper bound on $\mathbb{E}(|N_{O_i}(vx)|)$. Recall from (\ref{eqn: O_i change def}) that $$O_i \setminus O_{i+1} = \Gamma_{i+1} \cup C^1_{i+1} \cup C^2_{i+1} \cup Y_{i+1}.$$ Hence $O_{i+1} \subseteq O_i \backslash (C_{i+1}^1 \cup Y_{i+1})$, so 
\begin{equation}
\label{N_i(vx) bound definition of X}
\mathbb{E}(|N_{O_{i+1}}(vx)|) \leq \sum_{u \in N_{O_{i}}(vx)} \mathbb{P}(xvu \not\in C_{i+1}^1 \cup Y_{i+1}) =: X.
\end{equation}
Recall that $X= \sum_{u \in N_{O_{i}}(vx)} \mathbb{P}(xvu \not\in C_{i+1}^1 \cup Y_{i+1})$. Since we are in $\mathcal{G}_{\leq i}$,  $|N_{O_i}(vx)| \le q_iN$ by $\mathcal{N}_i$, and therefore by Lemma~\ref{probability C^1 or Y}
\begin{equation} 
\label{eqn: N_O(vx) Expected Value Upper Bound}
\mathbb{E}(X) \leq |N_{O_i}(vx)| \cdot \mathbb{P}(xvu \not\in C_{i+1}^1 \cup Y_{i+1}) \leq q_iN\left(\frac{q_{i+1}}{q_i} -\sigma^{3/2} q_i\right) = Nq_{i+1} - \sigma^{3/2}q_{i}^2N.
\end{equation}
 Next, we will prove concentration around the expected value by using Theorem \ref{chebyshev's inequality varraint}. The index set $J$ from Theorem \ref{chebyshev's inequality varraint} will be made of two parts, $J = J_{\Gamma} \cup J_Y$ where both $J_{\Gamma}$ and $J_Y$ are copies of $O_i$. Then $\xi_{\alpha}$ from Theorem \ref{chebyshev's inequality varraint} will be defined as follows:
\begin{equation}
\label{xi defintion}
\xi_{\alpha} = \begin{cases} 
\mathbbm{1}_{\alpha \in \Gamma_{i+1}} \qquad \hbox{if } \alpha \in J_{\Gamma},\\
\mathbbm{1}_{\alpha \in Y_{i+1}} \qquad \hbox{if } \alpha \in J_{Y}.
\end{cases}
\end{equation}
  Then $f$ in Theorem \ref{chebyshev's inequality varraint} will be defined so that $X$ from Theorem \ref{chebyshev's inequality varraint} is  the same as $X$ from (\ref{N_i(vx) bound definition of X}), and notice that $f$ is decreasing since the presence of edges in $\Gamma_{i+1}$ or $Y_{i+1}$ only adds edges to $C^1_{i+1} \cup Y_{i_+1}$. We need to give bounds on $\Delta_e$ which is the absolute value of the change in the size of
$(C_{i+1}^1 \cup Y_{i+1}) \cap (N_{O_i}(vx) \times \{v,x\})$ when we change whether or not an edge $e$ is in $\Gamma_{i+1}$. Assume for now that  $\Delta_e \le c_e$. Similarly, let $\Upsilon_e$ be the absolute value of the change in $|(C_{i+1}^1 \cup Y_{i+1}) \cap (N_{O_i}(vx) \times \{v,x\})|$ when we change whether or not an edge $e$ is in $Y_{i+1}$ and assume that $\Upsilon_e \le c'_{e}$.

Observe that changing whether or not $e$ is in $\Gamma_{i+1}$ only affects which edges are in $C_{i+1}^1$, and similarly changing whether an edge is in $Y_{i+1}$ will only affect which edges are in $Y_{i+1}$. Consequently, when considering $c_e$ we only need to consider the effect on $C_{i+1}^1$, and when considering $c'_e$ we only need to consider the effect on $Y_{i+1}$. 

First, to give a bound on $\Delta_e$, notice that $\Delta_e$ by definition is the number of changes to
$$C_{i+1}^1  \cap (N_{O_i}(vx) \times \{v,x\})=
\{xvw \in O_i : \hat{S}_{i}(xvw) \cap \Gamma_{i+1} \neq \emptyset\}.$$ 
Since we only changed whether or not $e$ was in $\Gamma_{i+1}$, $e$ can only affect elements of $C_{i+1}^1  \cap (N_{O_i}(vx) \times \{v,x\})$
when $e \in \hat{S}_{i}(vxw)$. Further, if $e \in \hat{S}_{i}(vxw)$ then $vxw \in \hat{S}_i(e)$. Indeed,  $e \in \hat{S}_{i}(vxw)$ implies $e \in O_i$ and that there exists  $e' \in E_i$ with $\{vxw\}ee'$ forming a $K_4^-$. Then $e \in O_i$ and $e' \in E_i$ so $e\{vxw\}e'$ is a $K_4^-$ which is the definition for $vxw \in \hat{S}_i(e)$.  So the number of $w$ such that $vxw$ is in 
$C_{i+1}^1  \cap (N_{O_i}(vx) \times \{v,x\})$
is at most $|\hat{S}_i(e) \cap (\{vx\} \times N_{O_i}(vx))|$. Thus for $e=u_1u_2u_3$, 
\begin{equation} \label{eqn:Debound}
\Delta_e \leq |\hat{S}_i(e) \cap ( N_{O_i}(vx) \times \{v,x\})| \leq |\hat{S}_i(e)| \leq 6q_i\pi_i \sqrt{N}\end{equation}
due to the bound $\max\{\hat{S}_{i}(u_1,u_2u_3), S_i(u_2,u_1u_3), S_i(u_3,u_1u_2)\} \le 2q_i\pi_i \sqrt{N}$
as we are assuming the
event $\mathcal{P}_i$. Further, $\sum_{e \in O_i} |\hat{S}_i(e) \cap (N_{O_i}(vx) \times \{v,x\})|\le 6q_i \pi_i \sqrt{N} |N_{O_i}(vx)|$ by Lemma~\ref{S_e intersection J}.
Moreover, by (\ref{eqn:Debound}) and Lemma~\ref{S_e intersection J},
\begin{equation}
\label{eqn: O_i delta e term}
\sum_{e \in O_i} \Delta_e^2 \le 6q_i\pi_i \sqrt{N}\sum_{e \in O_i} \Delta_e
\leq 6q_i\pi_i \sqrt{N} \sum_{e \in O_i} |\hat{S}_{i}(e) \cap (N_{O_i}(vx) \times \{v,x\})| \leq 36 q_i^2 \pi_i^2 N (q_i N)
\end{equation}
where the bound on $|N_{O_i}(vx) \times \{v,x\}|$ in the last inequality comes from the event $\mathcal{N}_i$.

Next, we want to bound $\Upsilon_e$, but notice that since all edges are in $Y_{i+1}$ independently, changing whether or not $e$ is in $Y_{i+1}$ can only affect  whether the edge $e$ itself (and no other edge) is in $(C_{i+1}^1 \cup Y_{i+1}) \cap (N_{O_i}(vx) \times \{v,x\})$. Therefore $\Upsilon_e \le 1$ and if $\Upsilon_e = 1$, then $e\in N_{O_i}(vx) \times \{v,x\}$.
Thus by $\mathcal{N}_i$, 
\begin{equation}
\label{eqn: O_i Upsilon term}
\sum_{e \in O_i} \Upsilon_e^2 \leq |N_{O_i}(vx) \times \{v,x\}| = |N_{O_i}(vx)| \leq q_i N.
\end{equation}
Now we can bound the value of $\lambda$ from Theorem \ref{chebyshev's inequality varraint} using the fact that $\hat{p}_{e,i} \leq q_i$ from Lemma \ref{q and pi bounds} (\ref{p hat < q}) and the bounds (\ref{eqn: O_i delta e term}) and (\ref{eqn: O_i Upsilon term}) by

\begin{equation}
\notag
\lambda := \sum_{e \in O_i} p\Delta_e^2 + \sum_{e \in O_i} \hat{p}_{e,i}\Upsilon_e^2 \leq p(36q_i^3\pi_i^2 N^2) + q_i^2N \leq 37 \sigma q_i^2 N^{3/2}
\end{equation}
where the final inequality followed from Lemma \ref{q and pi bounds} (\ref{q and pi bounds, q times pi less than 1}). Finally, we apply Theorem \ref{chebyshev's inequality varraint} with $\lambda$ and (\ref{eqn: N_O(vx) Expected Value Upper Bound}) to obtain $$\mathbb{P}(|N_{O_{i+1}(vx)}| \geq q_{i+1}N) \leq \exp \left(-\frac{\sigma^3 q_i^4 N^2}{74\sigma q_i^2 N^{3/2}}\right) = \exp \left(-\left(\frac{1}{74}\right)\sigma^2 q_i^2 \sqrt{N}\right) \leq N^{-\omega(1)}$$ where the final inequality holds from Lemma \ref{q and pi bounds} (\ref{eqn:sigmaqN}). Now taking a union bound over the $\binom{N}{2}$ choices for $v$ and $x$ completes the proof.

\subsection{Bound on $\neg \mathcal{P}$}

Throughout this section we will again omit the conditioning on $\mathcal{F}_i$ and the event $\mathcal{G}_{\leq i}$ in our notation.

We will start with the proof for $T_{i}(x,uv)$. Clearly
\begin{equation}
\label{eqn: Tchange}
|T_{i+1}(x,uv)| - |T_{i}(x,uv)| = \sum_{w \in R_{i}(x,uv)} \mathbbm{1}_{\{xuw, xvw \in \Gamma_{i+1}\}} + \sum_{w \in S_{i}(x,uv)} \mathbbm{1}_{\{xuw \in \Gamma_{i+1} \text{ or } xvw \in \Gamma_{i+1}\}}
\end{equation}
Notice that in the second term of (\ref{eqn: Tchange}) for every $w$ in $S_{i}(x,uv)$ one of $xuw$ or $xvw$ is already in $E_i$ so only one can potentially be in $\Gamma_{i+1}$. Now since for edges in $O_i$ the event of being in $\Gamma_{i+1}$ are independent Bernoulli random variables with probability $p$, we can bound the expected change in $T_{i}(x,uv)$ using the bounds on $|R_{i}(x,uv)|$ and $|S_{i}(x,uv)|$ from  (\ref{eqn: event P_i}) as follows.
\begin{equation}
    \label{eqn:expectedTchange}
    \notag
    \mathbb{E}(|T_{i+1}(x,uv)| - |T_{i}(x,uv)|) \leq (p^2) q_i^2 N + (p)2q_i \pi_i \sqrt{N} = \sigma^2 q_i^2 + 2q_i \pi_i \sigma \leq \sigma^2 + 2\sqrt{\sigma} \ll 1
\end{equation}
where in the last two inequalities we used $\sqrt{\sigma}\pi_i \leq 1$ and $q_i \leq 1$ and $\sigma \ll 1$ from Lemma \ref{q and pi bounds} (\ref{q less than 1}) and (\ref{sqrt sigma pi < 1}).

Now using the bound on $T_{i}(x,uv)$ from (\ref{eqn: event P_i}), $$|T_{i+1}(x,uv)| = |\Delta T_{i+1}(x,uv)| + |T_{i}(x,uv)| \leq |\Delta T_{i+1}(x,uv)| + i\log^9 N$$
Thus Theorem $\ref{standard chernoff}$ gives
\begin{align*}
\mathbb{P}(|T_{i+1}(x,uv)| \geq (i+1) \log^9 N) &\leq \mathbb{P}(|\Delta T_{i}(x,uv)| \geq \log^9 N) \\
&\leq \exp \left(-\frac{(\log^9N - \sigma^2q_i^2 - 2q_i\pi_i\sigma)^2}{2(\sigma^2q_i^2 + 2q_i\pi_i\sigma)}\right) \\
&\leq \exp \left(-\log^{17} N \right)\\
&\leq N^{-\omega(1)}
\end{align*}
where the third inequality comes from (\ref{q and pi bounds, q times pi less than 1}).
Thus taking a union bound over all $3 \binom{N}{3}$ choices for the vertex $x$ and the vertices $uv$ completes the proof.

Next, we will prove the bound on $|R_{i}(x,uv)|$. Since $O_{i+1} \subseteq O_i \backslash (C_{i+1}^1 \cup Y_{i+1})$ we get 
\begin{equation}
    \label{eqn: R_x,uv Bound 1}
|R_{i+1}(x,uv)| \leq \sum_{w \in R_{i}(x,uv)} \mathbbm{1}_{\{xuw,xvw \not \in (C_{i+1}^1 \cup Y_{i+1})\}} =: X
\end{equation}
Observe that
\begin{align*}
\mathbb{P}(xuw, xvw \not\in C_{i+1}^1 \cup Y_{i+1}) &\leq (1 - \hat{p}_{xuw,i})(1 - \hat{p}_{xvw,i})(1-p)^{|\hat{S}_{i}(x,uw)| + |\hat{S}_{i}(x,vw)| - |\hat{S}_{i}(x,uw) \cap \hat{S}_{i}(x,vw)|} \\
&= \mathbb{P}(xuw \not\in C_{i+1}^1 \cup Y_{i+1})\mathbb{P}(xvw \not \in C_{i+1}^1 \cup Y_{i+1})(1-p)^{-|\hat{S}_{i}(x,uw) \cap \hat{S}_{i}(x,vw)|}
\end{align*}
where the first inequality comes from excluding some edges in $\hat{S}_i(u,xw), \hat{S}_i(w,ux), 
\hat{S}_i(v,ux),$ and $\hat{S}_i(w,vx)$ which could also cloase edges.

Now if $e \in |\hat{S}_{i}(x,uw) \cap \hat{S}_{i}(x,vw)|$ then since $u \neq v$, $e$ must be of the form $xwz$ for some $z \in [N] \setminus \{x,u,v,w\}$, and $xwz \in O_i$ and $xuz, xvz \in E_i$. This implies $z \in T_{i}(x,uv)$, so since $\mathcal{P}_i$ holds the number of $z$ satisfying these conditions is bounded above by $i\log^9 N$ by (\ref{eqn: event P_i}). Thus 
\begin{equation}
\label{eqn: Intersection S_x,uw S_x,vw Bound}
|\hat{S}_{i}(x,uw) \cap \hat{S}_{i}(x,vw)| \leq i\log^9 N.
\end{equation}
Combining (\ref{eqn: R_x,uv Bound 1}), (\ref{eqn: Intersection S_x,uw S_x,vw Bound}), (\ref{eqn: event P_i}), and Lemma \ref{probability C^1 or Y}, we bound the expected value of $X$ by
\begin{align*}
    \mathbb{E}(X) &\leq (q_i^2 N)(\frac{q_{i+1}}{q_i} - 5\sigma^{3/2}q_i)^2 (1-p)^{-i \log^9 N} \\
    &\leq \left(q_{i+1}^2 N - 5q_{i+1}q_i^2 \sigma^{3/2} N + 25q_i^4 \sigma^3 N \right) \left(1 + pI \log^9 N \right) \\
    &\leq q_{i+1}^2 N -5q_i^3(1-12\sigma) \sigma^{3/2} N + 25q_i^4 \sigma^3 N + q_i^2 Np I \log^9 N  + 25q_i^4 \sigma^3 N p I \log^9 N\\
    &\leq q_{i+1}^2 N - 4.9q_i^3\sigma^{3/2}N + 25q_i^4 \sigma^3 N + \sqrt{N}\left(q_i^2 \sigma I \log^9 N + 25q_i^4 \sigma^4 I \log^9 N \right) \\
    &\leq q_{i+1}^2N - 4q_i^3 \sigma^{3/2} N
\end{align*}
where the third inequality uses $q_{i+1} \geq q_i(1-12\sigma)$ from Lemma \ref{q and pi bounds} (\ref{q_i - q_i+1 < sigma min...}) and $q_{i+1} \leq q_i$, and the last inequality uses $q_i^3\sigma^{3/2} \gg q_i^4 \sigma^3$ and $q_i^3\sigma^{3/2} N \gg \sqrt{N}q_i^2 \sigma I \log^9 N$ by Lemma \ref{q and pi bounds} (\ref{eqn:sigmaqN}).

To prove concentration, we will apply Theorem \ref{chebyshev's inequality varraint}. We define the index set $J= J_{\Gamma} \cup J_Y$ and define $\xi_{\alpha}$ as in (\ref{xi defintion}). Then $f$ will be defined so that $X = f((\xi_{\alpha})_{\alpha \in J})$. We again let $\Delta_e$ be the absolute value change of $X$ when we change whether an edge $e$ is in $\Gamma_{i+1}$, and let $\Upsilon_e$ be the absolute value change of $X$ when we change whether an edge $e$ is in $Y_{i+1}$. Then notice that if changing whether $e \in \Gamma_{i+1}$ changes whether $xuw$ or $xvw$ is in $C^1_{i+1}$, for some $w \in R_i(x,uv)$, there exists $e' \in E_i$ so $ee'\{xuw\}$ or $ee'\{xvw\}$ is a $K_4^-$ and then either $\{xuw\}$ or $\{xvw\}$ is in $\hat{S}_i(e)$. Thus $\Delta_e \leq |\hat{S}_i(e) \cap \left(\{xu, xv\} \times R_{i}(x,uv)\right)|$ and $\Upsilon_e$ is only $1$ when $e = xuw$ or $e = xvw$ for $w \in R_{i}(x,uv)$ and $0$ otherwise since changes edges being in $Y_{i+1}$ doesn't impact other edges. Then we can bound $\lambda$ in Theorem \ref{chebyshev's inequality varraint} by
\begin{align*}
    \lambda &= \sum_{e \in O_i} p \Delta_e^2 + \sum_{e \in O_i} \hat{p}_{e,i} \Upsilon_e^2\\
    &\leq p(6q_i\pi_i \sqrt{N})\sum_{e \in O_i} |\hat{S}_i(e) \cap (\{xu,xv\} \times R_i(x,uv))| + q_i(2q_i^2 N) \\
    &\leq \frac{\sigma}{\sqrt{N}} (6q_i\pi_i \sqrt{N})^2 (2q_i^2 N) + 2q_i^3 N\\
    &\leq 100\sigma q_i^3 N^{3/2}
\end{align*}
where the second inequality uses Lemma \ref{S_e intersection J} together with the fact that $\{xu, xv\} \times R_{i}(x,uv) \leq 2|R_{i}(x,uv)|$ and the last inequality uses Lemma \ref{q and pi bounds} (\ref{q and pi bounds, q times pi less than 1}).
Now Theorem~\ref{chebyshev's inequality varraint} gives
\begin{align}
    \mathbb{P}(|R_{i+1}(x,uv)| \geq q_{i+1}^2 N) 
    &\leq \mathbb{P}(X \geq \mathbb{E}(X) + 4q_{i}^3 \sigma^{3/2} N) \notag \\
    &\leq \exp\left(-\frac{16q_i^6 \sigma^3 N^2}{200\sigma q_i^3 N^{3/2}}\right) \notag  \\ &= \exp \left(-\frac{4}{25}q_i^3 \sigma^2 \sqrt{N})\right) \leq N^{-\omega(1)} \notag
\end{align}
where the last inequality follows from Lemma \ref{q and pi bounds}. Taking a union bound over the $3\binom{N}{3}$ choices for $x$ and $uv$ completes the proof for all of the $R_{i}(x,uv)$ variables.

Lastly we will prove the bounds for the variables $S_{i+1}(x,uv)$. Notice there are two types of edges which can be included in $\hat{S}_{i+1}(x,uv)$.
The first type are edges which were in $\hat{S}_{i}(x,uv)$ and did not leave $\hat{S}_{i}(x,uv)$ during step $i+1$.
The second type are edges $xuw$ where $w \in R_{i}(x,uv)$ and $xvw \in \Gamma_{i+1}$ or edges $xvw$ where $w\in R_i(x,uv)$ and $xuw \in \Gamma_{i+1}$.

We will prove bounds on the number of edges in $\hat{S}_{i}(x,uv)$ of each of these types separately and then combine the bounds to prove that the required upper bound for $S_{i}(x,uv)$ holds whp. To this end, define random variables
\begin{equation}
    \label{eqnn: S^1_x,uv def}
    \notag
    S_{i}^1(x,uv) := \sum_{e \in \hat{S}_{i}(x,uv)} \mathbbm{1}_{e \not \in C^1_{i+1} \cup Y_{i+1}}
\end{equation}

\begin{equation}
    \label{eqnn: S^2_x,uv def}
    \notag
    S_{i}^2(x,uv) := \sum_{w \in R_{i}(x,uv)} \mathbbm{1}_{xuw \in \Gamma_{i+1} \text{ or } xvw \in \Gamma_{i+1}}
\end{equation}

\begin{equation}
    \label{eqn: S bound by S^1 and S^2}
    |S_{i+1}(x,uv)| \leq S_{i+1}^1(x,uv) + S_{i+1}^2(x,uv)
\end{equation}
where inequality in (\ref{eqn: S bound by S^1 and S^2}) comes from the fact that edges can leave $\hat{S}_{i}(x,uv)$ if they are in the sets $\Gamma_{i+1}$ and $C_{i+1}^2$.
Because we are in the event $\mathcal{P}_i$,  (\ref{eqn: event P_i}) implies $|S_{i}(x,uv)|\le 2q_i\pi_i\sqrt N$ and $|R_{i}(x,uv)| \le q_i^2 N$.
Together with Lemma \ref{probability C^1 or Y} we obtain
\begin{align*}
    \label{eqnn: S^1 + S^2 expected value}
    \mathbb{E}(S_{i+1}^1(x,uv) + S_{i+1}^2(x,uv)) &\leq |S_{i}(x,uv)|\left(\frac{q_{i+1}}{q_i} - 5\sigma^{3/2}q_i\right) + 2p|R_{i}(x,uv)| \\ 
    &\leq 2q_{i+1}\pi_i \sqrt{N} - 10 q_i^2 \pi_i \sigma^{3/2} \sqrt{N} + 2\sigma \sqrt{N}q_i^2 \\
    &\leq 2q_{i+1}\pi_i\sqrt{N} - 10q_i^2\pi_i \sigma^{3/2}\sqrt{N} + 2\sigma \sqrt{N} q_i (q_{i+1} + 12\sigma q_i)\\
    &= 2q_{i+1} \sqrt{N} (\pi_i + \sigma q_i) - 10q_i^2 \pi_i \sigma^{3/2} \sqrt{N} + 24\sigma^2 q_i^2 \sqrt{N}\\
    &\leq 2q_{i+1}\pi_{i+1}\sqrt{N} -9q_i^2 \pi_i \sigma^{3/2}\sqrt{N}
\end{align*}
where the third inequality follows from (\ref{q_i - q_i+1 < sigma min...}) and the last inequality follows from the fact that $\pi_{i+1} = \pi_i + \sigma q_{i+1}$ by (\ref{eqn: pi def}) and $\sigma^2 \ll \pi_i \sigma^{3/2}$ by Lemma \ref{q and pi bounds} (\ref{Psi bound}) and (\ref{q and pi bounds pi close to psi}). Now if we can prove that whp the values of $S_{i+1}^1(x,uv)$ and $S_{i+1}^2(x,uv)$ are both not more than $q_i^2\pi_i\sigma^2 \sqrt{N}$ larger than their respective expected values, we will have proven the bound on $|S_{i}(x,uv)|$.

Now to prove concentration for $S_{i}^1(x,uv)$, we will bound $\lambda$ in Theorem \ref{chebyshev's inequality varraint}. Let $\Delta_e$ be the absolute value of the change in $S_{i}^1(x,uv)$ when we change whether $e$ is in $\Gamma_{i+1}$ and let $\Upsilon_e$ be the absolute value change in $S_{i}^1(x,uv)$ when we change whether $e$ is in $Y_{i+1}$.
Similar to the proofs of concentration for $|N_{O_i}(vx)|$ and $|R_{i}(x,uv)|$, $e$ only affects whether an edge $f$ is in $S_{i+1}(x,uv)$ or not if $e$ and $f$ are in a $K_4^-$ together with one other edge in $E_i$, which would then mean $f \in \hat{S}_i(e)$. Thus if we let $e = w_1w_2w_3$ $$\Delta_e \leq |\hat{S}_i(e) \cap \hat{S}_{i}(x,uv)| \leq |\hat{S}_i(e)| \leq |S_{i}(w_1,w_2w_3)| + |S_{i}(w_2,w_1w_3)| + |S_{i}(w_3,w_1w_2)|\leq 6q_i\pi_i\sqrt{N}.$$
However, we can obtain a much better bound on $|\hat{S}_i(e) \cap \hat{S}_{i}(x,uv)|$ than $|\hat{S}_i(e)|$ when $e\ne xuv$.
If $w_1w_2w_3 \cap xuv = \emptyset$ then $|\hat{S}_i(e) \cap \hat{S}_{i}(x,uv)| = 0$ since any triple which is in a $K_4^-$ with $e$ must contain two of the vertices $w_1,w_2, w_3$ but then this triple cannot contain both $x$ and one of $v$ or $u$.
If $|w_1w_2w_3 \cap xuv| = 1$, then $|\hat{S}_i(e) \cap \hat{S}_{i}(x,uv)| \leq 4$ since if a triple contains both two of $w_1,w_2,w_3$ and two of $x,u,v$, it must contain the unique overlapping vertex and then one of the remaining vertices from each of $w_1w_2w_3$ and $xuv$.
Finally, if $|w_1w_2w_3 \cap xuv| = 2$ then if $f \in \hat{S}_i(e) \cap \hat{S}_{i}(x,uv)$ it either includes one of the shared vertices between $w_1w_2w_3$ and $xuv$ and both of the verties in $w_1w_2w_3$ and $xuv$ which are not shared, or it contains both of the shared vertices and a third vertex.
Thus there are only two choices for $f$ which contain only one of the shared vertices. When $f$ contains both of the shared vertices, let the last vertex of $f$ be $z$. Notice that $f$ must contain $x$ by the definition of $\hat{S}_{i}(x,uv)$ and similarly $f$ must contain exactly one of $u,v$. 

{\bf Case 1.} Suppose that  $f = xuz$. Without loss of generality let $w_1 = x$ and $w_2 = u$ and $f = w_1w_2z$. Recall that $e=w_1w_2w_3$. Now since $f \in \hat{S}_i(e)$ there must be an edge in $E_i$ out of edges comprised of the vertices $x,u,w_3,z$ and $xuz,xuw_3 \in O_i$. Thus we have the following two subcases.

{\bf Subcase 1a.} $xw_3z \in E_i$. Since $xvz, xw_3z \in E_i$ we have  $z \in T_{i}(x,vw_3)$.

{\bf Subcase 1b.} $uw_3z \in E_i$. Since $xuz, xuv, xuw_3 \in O_i$ and $xvz, uw_3z \in E_i$ then $z \in U_{i}(x,u,v,w_3)$ (See Figure \ref{S^1 Cases} below).

{\bf Case 2.} Suppose that  $f = xvz$. As in Case 1, we obtain $z \in T_i(x, uw_3)$ or $z \in U_i(x,v,u,w_3)$.

\begin{figure}[hbt!]
\centering
\begin{tikzpicture}
\node (x) at (0,0) {};
\fill(x) circle(0.1) node[above]{$x$};
\node (v) at (-2,2) {};
\fill(v) circle(0.1) node[above]{$v$};
\node(u) at (2,2) {};
\fill(u) circle(0.1) node[above]{$u$};
\node (z) at (-2,-2){};
\fill(z) circle(0.1) node[below]{$z$};
\node (w) at (2,-2) {};
\fill(w) circle(0.1) node[below]{$w_3$};

\draw [rounded corners, dashed] (-2.6,2.4) -- (2.6,2.4) -- (0,-0.8) -- cycle;

\draw [rounded corners,dashed] (2.4,2.7) -- (-0.4,0) -- (2.4,-2.7) -- cycle;

\draw [rounded corners] (-2.4,2.7) -- (0.4,0) -- (-2.4,-2.7) -- cycle;

\draw[rounded corners] (-2.6,-2.4) -- (2.6,-2.4) -- (0,0.8) -- cycle;

\draw[rounded corners, dashed] (-3,-2) -- (-2,-3) -- (3,2) -- (2,3) -- cycle;

\node (Case 1) at (0,-3) {Case 1a};

\node (x) at (0+7,0) {};
\fill(x) circle(0.1) node[above]{$x$};
\node (v) at (-2+7,2) {};
\fill(v) circle(0.1) node[above]{$v$};
\node(u) at (2+7,2) {};
\fill(u) circle(0.1) node[above]{$u$};
\node (z) at (-2+7,-2){};
\fill(z) circle(0.1) node[below]{$z$};
\node (w copy) at (2+7,-2) {};
\fill(w copy) circle(0.1) node[below]{$w_3$};

\draw [rounded corners,dashed] (-2.6+7,2.4) -- (2.6+7,2.4) -- (0+7,-0.8) -- cycle;

\draw [rounded corners,dashed] (2.4+7,2.7) -- (-0.4+7,0) -- (2.4+7,-2.7) -- cycle;

\draw [rounded corners] (-2.4+7,2.7) -- (0.4+7,0) -- (-2.4+7,-2.7) -- cycle;

\draw[rounded corners] (-2.5+7,-1.7) -- (1.8 + 7, -1.7) -- (1.8 + 7, 2.7) -- (2.5 + 7, 2.7) -- (2.5 + 7, -2.5) -- (-2.5 + 7, -2.5) -- cycle;

\draw[rounded corners, dashed] (-3+7,-2) -- (-2+7,-3) -- (3+7,2) -- (2+7,3) -- cycle;

\node(Case 2) at (0+7, -3) {Case 1b};

\node(Open Key) at (-1,3.5) {Edges in $O_i$};
\node(Closed Key) at (6,3.5) {Edges in $E_i$};
\draw[dashed] (0.5,3.5) -- (2.5,3.5);
\draw (7.5,3.5) -- (9.5,3.5);

\end{tikzpicture}
\caption{}
\label{S^1 Cases}
\end{figure}

This means the number of $f \in \hat{S}_i(e) \cap \hat{S}_{i} (x,uv)$ is less than $4i(\log N)^9$.
Putting this together we get that if $e \neq xuv$ then $\Delta_e \leq \max \{4i (\log N)^9, 4\} \leq I\sigma^{-5}$. Hence applying Lemma \ref{S_e intersection J} with $J = \hat{S}_{i}(x,uv)$,
\begin{equation}
    \label{eqnn: S^1 lambda bound Delta term}
    \notag
    p \sum_{e \in O_i} \Delta_{e}^2 \leq p ((6q_i\pi_i \sqrt{N})^2 + I\sigma^{-5}(6q_i \pi_i \sqrt{N}) |S_{i}(x,uv)|) \leq 13I\sigma^{-4}\sqrt{N}q_i^2\pi_i^2
\end{equation}

Further, $\Upsilon_e \leq 1$ so $\hat{p}_{e,i} \sum_{e \in O_i} \Upsilon_e^2 \le q_i \sum_{e \in O_i} \Upsilon_e^2 \leq q_i |S_{x,uv}(i)| \leq 2q_i^2\pi_i \sqrt{N}$. Then we can bound $\lambda$ in Theorem \ref{chebyshev's inequality varraint} by
\begin{equation}
    \label{eqnn: lambda for S^1}
    \notag
    \lambda \leq p\sum_{e \in O_i} \Delta_e^2 + q_i\sum_{e \in O_i} \Upsilon_e^2 \leq 15I\sigma^{-4}\sqrt{N}q_i^2\pi_i^2.
\end{equation}
Theorem \ref{chebyshev's inequality varraint} gives
\begin{equation}
    \label{eqnn: S^1_x,uv bound}
    \mathbb{P}(S_{i}^1(x,uv) \geq \mathbb{E}(S_{i}^1(x,uv) + q_i^2\pi_i\sigma^{2}\sqrt{N}) \leq \exp(\frac{-q_i^4\pi_i^2\sigma^4 N}{30I\sigma^{-4}\sqrt{N}q_i^2\pi_i^2}) 
    = \exp(\frac{-q_i^2\sigma^8 \sqrt{N}}{30I})\leq N^{-\omega(1)}
\end{equation}
where the last inequality comes from (\ref{eqn:sigmaqN}).

Next, to prove concentration for $S_{i}^2(x,uv)$, notice that for all $w \in R_{i}(x,uv)$ the inclusion of the edges $xuw$ or $xvw$ in $\Gamma_{i+1}$ are independent identically distributed Bernoulli random variables. Also, $\mathbb{E}(S_{i}^2(x,uv)) \leq 2pq_i^2N=2\sigma q_i^2 \sqrt{N}$ and $\sigma \pi_i \leq 1$ by Lemma \ref{q and pi bounds} (\ref{sqrt sigma pi < 1}). We use Theorem \ref{standard chernoff} to obtain

\begin{equation}
    \label{eqnn: bound for S^2_x,uv}
    \mathbb{P}(S_{i}^2(x,uv) \leq \mathbb{E}(S_{i}^2(x,uv)) + \sigma^2 q_i^2 \pi_i \sqrt{N}) \leq \exp \left(\frac{-(\sigma^2 q_i^2 \pi_i \sqrt{N})^2}{2(2\sigma q_i^2 \sqrt{N})}\right) =\exp\left(\frac{-\sigma^3 q_i^2 \pi_i^2\sqrt{N}}{4}\right) \leq N^{-\omega(1)}
\end{equation}
where the last inequality holds by Lemma \ref{q and pi bounds}. Now taking a union bound over all choices for $x$ and $uv$ and combining (\ref{eqnn: S^1_x,uv bound}) and (\ref{eqnn: bound for S^2_x,uv}) completes the proof that $\mathbb{P}(\neg \mathcal{P}_{i+1} | \mathcal{G}_{\leq i}) \leq n^{-\omega(1)}$.

\subsection{Bound on $\neg \mathcal{P}^+$}

We will omit the conditioning on $\mathcal{F}_i$ and the event $\mathcal{G}_{\leq i}$ in the notation in this section. For $z$ to be in $U_{i+1}(x,u,v,w)$ but not in $U_i(x,u,v,w)$, at least one of the edges $xvz, uzw$ must have been added to $\Gamma_{i+1}$ in this round (see Figure \ref{U fig}). In other words there are three cases for how $xvz$ and $uwz$ are in $E_{i+1}$ 
\begin{enumerate}
    \item $xvz \in \Gamma_{i+1}$ and $uwz \in E_{i}$
    \item $xvz \in E_{i}$ and $uwz \in \Gamma_{i+1}$
    \item $xvz \in \Gamma_{i+1}$ and $uwz \in \Gamma_{i+1}$.
    \end{enumerate}
Further, for $z$ to be in $U_{i+1}$ all three of $xuv, xuw,$ and $xuz$ must be in $O_{i+1}$. In particular, in the first and third cases where $xvz \in \Gamma_{i+1}$, $z \in R_i(x,uv)$, and in the second and third cases where $uwz \in \Gamma_{i+1}$, $z \in R_i(u,xw)$. Further, in the first case where $uwz \in E_i$, $z \in S_i(u,xw)$ and similarly in the second case where $xvz \in E_i$ $z \in S_i(x,uv)$.

Then defining
\begin{align*}
    U_1 &:= \sum_{z \in R_i(x,uv) \cap S_i(u,xw)} \mathbbm{1}_{xvz \in \Gamma_{i+1}} \leq \sum_{z \in S_i(u,xw)} \mathbbm{1}_{xvz \in \Gamma_{i+1}} =: U_1^+ \\
    U_2 &:= \sum_{z \in S_i(x,uv) \cap R_i(u,xw)} \mathbbm{1}_{uwz \in \Gamma_{i+1}} \leq \sum_{z \in S_i(x,uv)} \mathbbm{1}_{uwz \in \Gamma_{i+1}} =: U_2^+ \\
    U_3 &:= \sum_{z \in R_i(x,uv) \cap R_i(u,xw)}\mathbbm{1}_{xvz, uwz \in \Gamma_{i+1}} \leq \sum_{z \in R_i(x,uv)} \mathbbm{1}_{xvz, uwz \in \Gamma_{i+1}} =: U_3^+
\end{align*}
we obtain
\begin{equation}
\notag
    |U_{i+1}(x,u,v,w) \setminus U_i(x,u,v,w)| \leq U_1 + U_2 + U_3 \leq U_1^+ + U_2^+ + U_3^+ =: X.
\end{equation}

Using  (\ref{eqn: event P_i^+}) yields
\begin{equation}
\label{eqn: U upper bound}
    |U_{i+1}(x,u,v,w)| \leq i(\log N)^9 + X.
\end{equation}

We first upper bound the expected value of $X$ using (\ref{eqn: event P_i}) and (\ref{eqn: U upper bound}) as
\begin{equation}
\label{eqn: U expected value}
    \mathbb{E}(X) \leq 4pq_i\pi_i \sqrt{N} + p^2q_i^2 N.
\end{equation}

Since $X$ is a sum of independent $(0,1)$ random variables, by Theorem \ref{standard chernoff} and (\ref{eqn: U upper bound}) and (\ref{eqn: U expected value})
\begin{align*}
    \mathbb{P}(|U_{i+1}(x,u,v,w)| \geq (i+1) \log^9 N) &\leq \mathbb{P}(X \geq \log^9 N) \\
    &\leq \exp \left(- \frac{(\log^9 N - 4pq_i\pi_i \sqrt{N} - p^2q_i^2 N)^2}{2(4pq_i \pi_i \sqrt{N} + p^2q_i^2 N)}\right) \\
    &\leq \exp \left(-\frac{(\log^9 N - 4\sigma q_i \pi_i - \sigma^2 q_i^2)^2}{8\sigma q_i \pi_i + \sigma^2 q_i^2} \right) \\
    &\leq \exp(-\log^{17} N) \\
    &\leq N^{-\omega(1)}.
\end{align*}

\subsection{Proof of $\mathcal{N}^+$}

The random variable $|N_{\Gamma_{i+1}}(vx) \cap A|$ is a sum of $|N_{O_i}(vx) \cap A|$ Bernoulli random variables each with probability $p$. Thus, 
\begin{equation}
\label{eqn: N^+ expected value}
\notag
\mathbb{E}(|N_{\Gamma_{i+1}}(vx) \cap A|) = p|N_{O_i}(vx) \cap A| \leq p|A|
\end{equation}
Applying Theorem \ref{standard chernoff} gives
\begin{align*}
    \mathbb{P}(|N_{\Gamma_{i+1}}(vx) \cap A| \geq p|A|(1 + N^{\frac{1}{4} + \beta})) &\leq \exp \left(-\frac{(p|A|N^{\frac{1}{4} + \beta})^2}{2p|A|}\right) \\
    &= \exp \left(-\frac{1}{2}p|A|N^{\frac{1}{2} + 2\beta}\right) \\
    &= \exp \left(-\frac{1}{2} \sigma |A| N^{2\beta}\right).
\end{align*}
By a union bound over all choices of $v, x$ and $A$, we obtain
\begin{align*}
    \mathbb{P}(\exists v, x, A: |N_{\Gamma_{i+1}(vx)} \cap A| \geq p|A|(1+N^{\frac{1}{4} + \beta})) &< N^{2+A} \exp \left(-\frac{1}{2} \sigma N^{2\beta} |A| \right) \\
    &=\exp \left((2+|A|)\log n -\frac{1}{2} \sigma N^{2\beta} |A| \right) \\
    &< N^{-\omega(1)}.
\end{align*}

\subsection{Proof of $\mathcal{Q}^+$}
Recalling that $s = n\sigma^4 q_I^2 = C\sqrt{N\log N}\sigma^4 q_I^2$, we begin with the case where $|A| = |B| \geq s$.
Since $O_{i+1} \subseteq O_i \backslash (C_{i+1}^1 \cup Y_{i+1})$

\begin{equation}
\label{eqn: Q^+ X def}
    |O_{i+1}(A,B,x)| \leq |O_i(A,B,x) \backslash (C_{i+1}^1 \cup Y_{i+1})| = \sum_{f \in O_i(A,B,x)} \mathbbm{1}_{f \not \in C_{i+1}^1 \cup Y_{i+1}} =: X
\end{equation}

We will first bound $\mathbb{E}(X)$ and then we will use Theorem \ref{chebyshev's inequality varraint} to prove concentration. Using $\mathcal{Q}_i^+$ and Lemma \ref{probability C^1 or Y}
\begin{align*}
    \mathbb{E}(X) &= |O_i(A,B,x)|\, \mathbb{P}(f \not\in C_{i+1}^1 \cup Y_{i+1}) \\
    &\leq q_i |A| |B| (\frac{q_{i+1}}{q_i} - 5\sigma^{3/2}q_i) \\
    &\leq q_{i+1}|A||B| - 5q_i^2 \sigma^{3/2}|A||B|.
\end{align*}

To bound the probability that $X \geq \mathbb{E}(X) + 5q_i^2\sigma^{3/2}|A||B|$, we will use Lemma \ref{chebyshev's inequality varraint}. For every $e \in O_i$, let $\Delta_e$ be the absolute change in $X$ when we change whether $e$ is in $\Gamma_{i+1}$. Similarly let $\Upsilon_e$ be the absolute change in $X$ when we change whether $e$ is in $Y_{i+1}$.

First, to bound $\Delta_e$, let $e = w_1w_2w_3$ and in this section assume $a \in A$ and $b \in B$. If $|\{w_1,w_2,w_3\} \cap (A \cup B \cup \{x\})| \leq 1$, then any $K_4^-$ which contains $e$ does not contain an edge of the form $xab$, so $\Delta_e = 0$.
Next, if $x \not\in \{w_1,w_2, w_3\}$ and $|\{w_1,w_2,w_3\} \cap (A \cup B)| \geq 2$, then if an edge of the form $xab$ is in a $K_4^-$ together with $e$, the vertices of this $K_4^-$ must be $\{w_1,w_2,w_3,x\}$ so $\Delta_e \leq 3$.

Finally, without loss of generality let $w_1 = x$ and assume $|\{w_2,w_3\} \cap (A \cup B)| \geq 1$. If $xab$ is an edge which is in $C_{i+1}^1$ only when $e \in \Gamma_{i+1}$, and without loss of generality $w_2 = b$ then either the edge $xw_3a$ or $w_2w_3a$ must be in $E_i$. The number of these edges can be bounded by $\sum_{j=1}^i (|N_{\Gamma_j}(xw_3) \cap A| + |N_{\Gamma_j}(w_2w_3) \cap A|)$. Because $\mathcal{N}_i^+$ holds, this is no more than $20p |A| N^{\frac{1}{4} + \beta} I$. Thus 

\begin{equation}
\label{eqn: Q^+ event Delta e bound}
\Delta_e \leq 40p|A|N^{\frac{1}{4} + \beta}I
\end{equation}

where the extra factor of $2$ comes from the possibility that both $w_2$ and $w_3$ are in $A \cup B$. Further, since changing whether $e \in \Gamma_{i+1}$ only changes if $xab$ is open if $xab \in \hat{S}_i(e)$, $\Delta_e \leq |\hat{S}_i(e) \cap O_i(A,B,x)|$. Then using (\ref{eqn: Q^+ event Delta e bound}) together with Lemma \ref{S_e intersection J}, we obtain

 \begin{equation}
 \notag
     p \sum_{e \in O_i} c_e^2 \leq 40p^2 |A|N^{\frac{1}{4} + \beta} I (6 q_i \pi_i \sqrt{N} (q_i|A|^2)) \leq 240 \sigma^2 N^{2\beta - \frac{1}{4}} q_i^2 \pi_i |A|^3.
 \end{equation}

 Further $\Upsilon_e \leq \mathbbm{1}_{e \in O_i(A,B,x)}$ so $$q_i \sum_{e \in O_i} \hat{c}_e^2 \leq q_i^2 |A|^2 \leq q_i^2 |A|^3 \left(\frac{1}{C\sigma^4 q_I^2}N^{-\frac{1}{2}} \log^{-\frac{1}{2}} N  \right) \leq \sigma^4 N^{2\beta - \frac{1}{4}} q_i^2 \pi_i |A|^3$$ where the second inequality comes from $|A| \geq s$. The last inequality holds, with room to spare, due to the power of $N$ which is $-1/2+2\beta+o(1)$ on the left and $-1/4+2\beta$ on the right. Hence

 \begin{equation}
     \label{eqn: Q^+ lambda}
     \lambda \leq p\sum_{e \in O_i} c_e^2 + q_i\sum_{e \in O_i} \hat{c}_e^2 \leq 250 \sigma^2 N^{2\beta - \frac{1}{4}}q_i^2 \pi_i |A|^3
 \end{equation}
Applying Theorem \ref{chebyshev's inequality varraint} gives
\begin{align*}
     \mathbb{P}(|O_{i+1}(A,B,x)| \geq q_{i+1} |A|^2) &\leq \mathbb{P}(X \geq \mathbb{E}(X) + q_i^2 \sigma^{3/2}|A|^2) \\
     &\leq \exp \left(-\frac{q_{i}^4 \sigma^3 |A|^4}{500\sigma^2 N^{2\beta-\frac{1}{4}}q_i^2\pi_i |A|^3}\right) \\
     &\leq \exp \left(-\frac{1}{500} q_i^3 \sigma |A| N^{\frac{1}{4} - 2\beta}\right) \\
     &\leq \exp \left(-\frac{1}{500} |A|N^{\frac{1}{4} - 6\beta} \right)
 \end{align*}
 where the third inequality uses (\ref{q and pi bounds, q times pi less than 1}) and the fourth inequality uses (\ref{eqn:sigmaqN}).
By the union bound over all choices of $x, A$ we obtain
 \begin{align*}
     \mathbb{P}(\exists x, A, B, |A|=|B|: |O_i(A,B,x)| \geq q_{i+1}|A||B|) &\leq N^{2|A|+1}\exp \left(-\frac{1}{500} |A| N^{\frac{1}{4} - 6\beta} \right) \\
     &\leq \exp \left((2|A|+1)\log N-\frac{1}{500} |A|N^{\frac{1}{4} - 6\beta}\right) \\
     &\leq N^{-\omega(1)}
 \end{align*}
 which proves the result when $|A| = |B|$.

 Now assume $|A| > |B| \geq s$. Then
 \begin{equation}
 \notag
     |O_i(A,B,x)| = \frac{\sum_{A' \subset A, |A'| = |B|} |O_i(A',B,x)|}{\binom{|A| - 1}{|B| - 1}}.
 \end{equation}

 Now applying the fact that  $\mathcal{Q}_i^+$  holds for  $|A| = |B|$ with probability at least $1 - N^{-\omega(1)}$ we obtain
 \begin{equation}
 \notag
     |O_i(A,B,x)| \leq \frac{\binom{|A|}{|B|} q_i |B|^2}{\binom{|A|-1}{|B|-1}} = q_i |A| |B|.
 \end{equation}
 with probability at least $1 - N^{-\omega(1)}$.

\subsection{Proof of $\mathcal{Q}$}

Recall we are to show that if  $A, B \in \binom{[N]}{n}$ are disjoint and $x \not\in A \cup B$, then $\tau_{i+1}q_{i+1} n^2 \leq |O_i(A,B,x)| \leq q_{i+1} |A||B|$. We begin with the following Lemma.

\begin{lemma}
    \label{Q event lemma}
    Let $\mathcal{Q}_{A,B,x}$ be the event that the following  hold where $x \in [N]$ and $A, B \in \binom{[N]}{n}$:
\begin{align*} X_1 &:= |O_i(A,B,x) \backslash (C^1_{i+1} \cup Y_{i+1})| \in \left[|O_i(A,B,x)|\left(\frac{q_{i+1}}{q_i} - 8\sigma^{3/2}q_i\right), |O_i(A,B,x)|\frac{q_{i+1}}{q_i} \right] \\
 X_2 &:= |O_i(A,B,x) \cap C^2_{i+1}| \leq |O_i(A,B,x)|(6\sigma^2 q_i) + 100\sigma q_i^2 \sqrt{N}n \\
   X_3 &:= |O_i(A,B,x) \cap \Gamma_{i+1}| \leq |O_i(A,B,x)| (2\sigma^2 q_i).
\end{align*}
   Then $\mathbb{P}(\neg \mathcal{Q}_{A,B,x} \cap \mathcal{N}_{i+1} \cap \mathcal{P}_{i+1}) \leq N^{-\omega(
    n)}$ for all choices of $A, B, x$.
\end{lemma}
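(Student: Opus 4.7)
My plan is to prove each of the three inequalities in $\mathcal{Q}_{A,B,x}$ separately, bound the failure of each by $N^{-\omega(n)}$, and combine them via a trivial union bound over three events. I condition throughout on $\mathcal{F}_i$ and on $\mathcal{G}_{\leq i}$, which gives $(1-\delta/2)q_i n^2 \leq |O_i(A,B,x)| \leq q_i n^2$ via $\mathcal{Q}_i$, together with the technical inequalities $|\hat S_i(e)| \leq 6q_i\pi_i\sqrt N$ from $\mathcal{P}_i$, $|R_i| \leq q_i^2 N$, $|S_i| \leq 2q_i\pi_i\sqrt N$, the neighborhood control from $\mathcal{N}_i^+$, and Lemma \ref{S_e intersection J}. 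The intersection with $\mathcal{N}_{i+1}\cap\mathcal{P}_{i+1}$ on the left-hand side is used only to truncate: outside this event the statement is vacuous.

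The bound on $X_3 = |O_i(A,B,x)\cap \Gamma_{i+1}|$ is immediate. It is a sum of $|O_i(A,B,x)|$ independent Bernoulli$(p)$ variables with mean $p|O_i|$, and the target $2\sigma^2 q_i|O_i|$ exceeds the mean by the polynomial factor $2\sigma q_i\sqrt N$, so Theorem \ref{standard chernoff} produces failure probability $\exp(-\Omega(\sigma^2 q_i|O_i|)) = N^{-\omega(n)}$, using $|O_i|=\Omega(q_i^2 N\log N)$ and the polynomial lower bound on $q_i$ from Lemma \ref{q and pi bounds}. For $X_2 = |O_i(A,B,x)\cap C^2_{i+1}|$, an edge $f=xab$ belongs to $C^2_{i+1}$ iff there is a vertex $w$ and two edges among $\{xaw, xbw, abw\}$ both drawn into $\Gamma_{i+1}$. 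Splitting by sub-type and applying the $\mathcal{P}_i$-bounds $|R_i(x,ab)|$, $|R_i(a,xb)|$, $|R_i(b,xa)| \leq q_i^2 N$ yields $\mathbb E(X_2) = O(\sigma^2 q_i^2 |O_i|)$, already far below $|O_i|(6\sigma^2 q_i)$. The additional slack $100\sigma q_i^2\sqrt N\,n$ absorbs the concentration error, which follows from Theorem \ref{Chernoff variant limited overlap} applied to the $(f_1,f_2)$-pair indicators; each edge of $\Gamma_{i+1}$ participates in at most $O(q_i\pi_i\sqrt N)$ overlapping pairs by Lemma \ref{S_e intersection J}, giving a tail bound of $N^{-\omega(n)}$.

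The crux of the lemma, and the step I expect to be hardest, is the concentration of $X_1 = \sum_{f\in O_i(A,B,x)} \mathbbm 1_{f\notin C^1_{i+1}\cup Y_{i+1}}$. Lemma \ref{probability C^1 or Y} already places $\mathbb E(X_1)$ in the desired window, so I must rule out a deviation of order $\Theta(\sigma^{3/2}q_i|O_i|)$. I apply Theorem \ref{chebyshev's inequality varraint} on the two coordinate systems $\{\mathbbm 1_{e\in \Gamma_{i+1}}\}_{e\in O_i}$ and $\{\mathbbm 1_{e\in Y_{i+1}}\}_{e\in O_i}$ (in both of which the function is decreasing). The sensitivities are $\Delta_e \leq |\hat S_i(e)\cap O_i(A,B,x)|$ and $\Upsilon_e \leq \mathbbm 1_{e\in O_i(A,B,x)}$ by the usual $K_4^-$ analysis. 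Feeding in only the raw bound $|\hat S_i(e)|\leq 6q_i\pi_i\sqrt N$ and Lemma \ref{S_e intersection J} gives an exponent that just barely fails to beat the $\omega(n\log N)$ required to survive the eventual union bound over $A, B, x$ in the enclosing argument. To get past this, I plan to split $\sum_e\Delta_e^2$ by whether $x\in e$: for $e\not\ni x$, the relation $|e\cap f|=2$ forces the fourth vertex of the witnessing $K_4^-$ to be $x$, which pins $\{a,b\}\subseteq e$ and yields $\Delta_e \leq 3$; the count of such $e$ with $\Delta_e\geq 1$ is controlled by counting pairs $(g,e)$ with $g\in E_i$ running through $x$ and two vertices of $e$, bounded using $\mathcal{N}_i^+$ on $|N_{E_j}(xw)\cap(A\cup B)|$ summed over $j\leq i$. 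For the comparatively few $e\ni x$, the bound $\Delta_e\leq 6q_i\pi_i\sqrt N$ from $\mathcal{P}_i$ is affordable, since there are only $\binom{N-1}{2}$ such triples. This refined accounting shrinks $\lambda$ by a polynomial factor and pushes $t^2/(2\lambda)$ above $\omega(n\log N)$, completing the concentration for $X_1$ and hence the lemma.
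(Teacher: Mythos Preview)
Your treatment of $X_3$ is fine and matches the paper. The real problems are in $X_2$ and, to a lesser extent, $X_1$.

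For $X_2$ your overlap bound is a misapplication: Lemma~\ref{S_e intersection J} controls $\sum_e |\hat S_i(e)\cap J|$, which counts configurations with one edge already in $E_i$, whereas membership in $C^2_{i+1}$ requires \emph{two} edges of $\Gamma_{i+1}$. The overlap parameter in Theorem~\ref{Chernoff variant limited overlap} for a pair $\{xwa,xwb\}$ is governed by $|N_{\Gamma_{i+1}}(xw)\cap(A\cup B)|$, not by anything in $\hat S_i$, and this quantity has no useful a~priori bound. The paper handles this by introducing a threshold $z=\sigma^4 q_i^2|A|$ and a heavy set $W=\{w:|N_{\Gamma_{i+1}}(xw)\cap(A\cup B)|\ge z\}$; for $w\notin W$ the overlap is at most $8z$ so Theorem~\ref{Chernoff variant limited overlap} applies and gives the $6\sigma^2 q_i|O_i|$ term, while the heavy-$w$ contributions $X_2^{AB},X_2^A,X_2^B$ are bounded \emph{deterministically} on $\mathcal N_{i+1}\cap\mathcal P_{i+1}$ via the pairwise-intersection Lemma~\ref{Intersecting Sets Lemma} (using $|T_{i+1}(x,w_1w_2)|\le I\log^9 N$). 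That deterministic piece is exactly the $100\sigma q_i^2\sqrt N\,n$ term, so it is not ``concentration slack'' and the intersection with $\mathcal N_{i+1}\cap\mathcal P_{i+1}$ is essential, not merely a truncation device as you state.

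For $X_1$ your case split is backwards relative to what is needed. With $\Delta_e\le 6q_i\pi_i\sqrt N$ on the $e\ni x$ triples (even if you also feed in $\Delta_e\le|\hat S_i(e)\cap O_i(A,B,x)|$ and Lemma~\ref{S_e intersection J}), the resulting $\lambda$ is of order $\sigma q_i^3\pi_i^2\sqrt N\,n^2$, and $t^2/\lambda$ with $t=\sigma^{3/2}q_i^2 n^2$ becomes $O(\sigma^2 q_i n^2/(\pi_i^2\sqrt N))=O(\sigma^2 q_i\sqrt N)$, which is not $\omega(n\log N)$; the $\binom{N-1}{2}$ count does not rescue this. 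The paper instead uses $\mathcal N_i^+$ precisely on the $e\ni x$ case: writing $e=xw_2w_3$ with $w_2\in A\cup B$, any $f=xab$ closed by $e$ forces $a\in N_{E_i}(xw_3)\cup N_{E_i}(w_2w_3)$ intersected with $A$, whence $\Delta_e\le 40\,p\,|A|\,N^{1/4+\beta}I$. Combining this pointwise bound with Lemma~\ref{S_e intersection J} yields $\lambda\le 250\,\sigma^2 N^{2\beta-1/4}q_i^2\pi_i\,n^3$, which gives $t^2/(\lambda+Ct)\ge n\,N^{1/4-10\beta}=\omega(n\log N)$ as required.
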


Before proving Lemma \ref{Q event lemma}, we will use Lemma \ref{Q event lemma} to prove $\mathbb{P}(\neg \mathcal{Q}_{i+1} \cap \mathcal{N}_{i+1} \cap \mathcal{P}_{i+1}) \leq N^{-\omega(1)}$.  Assume for all $x, A, B$, the event $\mathcal{Q}_{A,B,x}$ holds. Let us show  that $\mathcal{Q}_{i+1}$ holds.

Let $A, B \in \binom{[N]}{n}$ and $x \in [N] \setminus (A \cup B)$. Recall that an edge in $O_{i}(A,B,x)$ remains in $O_{i+1}(A,B,x)$ if it is not in $C^1_{i+1} \cup Y_{i+1} \cup C^2_{i+1} \cup \Gamma_{i+1}$. Therefore
\begin{equation}
\notag
    X_1 - X_2 -X_3 \leq |O_{i+1}(A,B,x)| \leq X_1.
\end{equation}
For the upper bound, notice that since $\mathcal{G}_{\leq i}$ holds, $|O_i(A,B,x)| \leq q_i|A||B|$ by (\ref{eqn: event Q_i def}), so
\begin{equation}
\notag
    |O_{i+1}(A,B,x)| \leq X_1 \leq 
  |O_i(A,B,x)|\frac{q_{i+1}}{q_i} \le q_{i+1}|A||B|.
\end{equation}

For the lower bound, recall that $|A|=|B|=n=C\sqrt{N \log N}$ and $\sigma=1/\log^2 N$ and $\tau_i \leq 1$. Then
\begin{align*}
    X_1 - X_2 - X_3 &\geq |O_i(A,B,x)|\left(\frac{q_{i+1}}{q_i} - 8\sigma^{3/2}q_i -8\sigma^2q_i\right) -100\sigma q_i^2 \sqrt{N}n \\
    &\geq (\tau_i q_i n^2) \left(\frac{q_{i+1}}{q_i} - 16\sigma^{3/2}q_i\right) -100\sigma q_i^2 \sqrt{N}n \\
    & \geq n^2\left(\tau_i q_{i+1} - 16\sigma^{3/2} q_i^2 - \frac{100\sigma q_i^2}{C\sqrt{\log N}}\right) \\
    &=n^2\left(\tau_i q_{i+1} - 16\sigma^{3/2} q_i^2 - \frac{100\sigma^{5/4} q_i^2}{C}\right) \\
    & \geq q_{i+1} n^2 \left( \tau_i - \left(\frac{200 \sigma^{5/4} q_i}{C}\right)\left(\frac{q_i}{q_{i+1}}\right)\right)
\end{align*}
where in the second inequality we used (\ref{eqn: event Q_i def}) and in the equality we used $\sigma^{1/4} = \frac{1}{\sqrt{\log N}}$. Now recalling that $\tau_i = 1 - \frac{\delta \pi_i}{2\pi_I}$ and therefore $\tau_i - \tau_{i+1} = \frac{\delta \sigma q_i}{2\pi_I}$ together with $\sqrt{\frac{\log(\sqrt{3}x)}{3}} - \frac{1}{\sqrt{3}} \leq \Psi(x) \leq \sqrt{\frac{\log(\sqrt{3}x)}{3}} + \frac{1}{\sqrt{3}}$, $|\pi_i - \Psi(i\sigma)| \leq 2\sigma$, and $I = \lceil N^{\beta}\rceil$ from Lemma \ref{q and pi bounds} (\ref{Psi bound}) and (\ref{q and pi bounds pi close to psi}) gives

\begin{align*}
    \tau_{i+1} &= \tau_i - \frac{\delta \sigma q_i}{2\pi_I} \\
    &\leq \tau_i - \frac{\delta \sigma q_i}{2\left(\sqrt{\frac{\log (\sqrt{3}(I\sigma))}{3}} +(\frac{1}{\sqrt{3}} + 2\sigma)\right)} \\
    &\leq \tau_i - \frac{\delta \sigma q_i}{2.1\sqrt{ \frac{0.5 \log 3 + \beta \log N + \log \sigma}{3}}} \\
    &\leq \tau_i - \frac{\sqrt{3} \delta \sigma q_i}{(2.2)\sqrt{\beta \log N}} \\
    &= \tau_i - \frac{\sqrt{3}\delta q_i \sigma^{5/4}}{(2.2)\sqrt \beta}.
\end{align*}

 Now using $q_i/(q_{i+1}) \leq 12\sigma + 1 \leq 2$ from Lemma \ref{q and pi bounds} (\ref{q_i - q_i+1 < sigma min...}) and choosing constant $C$ so that $400/C \leq \frac{\sqrt{3}\delta}{(2.2)\sqrt{\beta}}$ gives $X_1 - X_2 - X_3 \geq q_{i+1}n^2 \tau_{i+1}$ as desired.
 
 We have shown that $\neg \mathcal{Q}_{i+1} \cap \mathcal{N}_{i+1} \cap \mathcal{P}_{i+1} \subset \cup_{x,A, B} \neg\mathcal{Q}_{A,B,x} \cap \mathcal{N}_{i+1} \cap \mathcal{P}_{i+1}$.
 Finally, applying Lemma~\ref{Q event lemma} and union bounding over all choices of $x, A, B$ gives 
 \begin{align*}
 \mathbb{P}(\neg \mathcal{Q}_{i+1} \cap \mathcal{N}_{i+1} \cap \mathcal{P}_{i+1}) &\leq \mathbb{P}(\exists x, A, B: \neg \mathcal{Q}_{A,B,x} \cap \mathcal{N}_{i+1} \cap \mathcal{P}_{i+1})\\
 &\leq N^{1+2n-\omega(n)} \\
 &\leq N^{-\omega(n)}.
 \end{align*}
 Now all that is needed to prove $\mathcal{Q}_{i+1}$ holds with high probability is  to prove Lemma \ref{Q event lemma}.

\subsection{Proof of Lemma \ref{Q event lemma}} \label{ss}

\subsubsection{$X_1$}
First we prove the bound on $X_1$. The upper bound follows from the proof for the event $\mathcal{Q}^+_i$ since $X_1$ is exactly the $X$ from (\ref{eqn: Q^+ X def}) and $n > s$. For the lower bound, we apply Lemma \ref{probability C^1 or Y} to show

\begin{equation}
    \label{eqn: X_1 lower bound expected value}
    \notag
    \mathbb{E}(X_1) = \sum_{e \in O_i{A,B,x}} \mathbb{P}(e \not \in C_{i+1,A,B,x}^1 \cup Y_{i+1}) \geq |O_{i}(A,B,x)|(\frac{q_{i+1}}{q_i} - 7\sigma^{3/2}q_i)
\end{equation}

Now using the same value for $\lambda$ from (\ref{eqn: Q^+ lambda}) and $C = 40p|A|N^{\frac{1}{4} + \beta} I$ which follows from (\ref{eqn: Q^+ event Delta e bound}) in the proof of $\mathcal{Q}^+$, we can apply the second half of Theorem \ref{chebyshev's inequality varraint} to show

$$  \mathbb{P}\left(X_1 \leq |O_i(A,B,x)|\left(\frac{q_{i+1}}{q_i} - 8\sigma^{3/2}q_i\right)\right) \leq \mathbb{P}(X_1 \leq \mathbb{E}(X_1) - \sigma^{3/2}q_i |O_i(A,B,x)|) \leq \exp \left(-D\right) 
$$
where 
$$D=\frac{(\sigma^{3/2}q_i |O_i(A,B,x)|)^2}{2((250 \sigma^2 N^{2\beta - \frac{1}{4}}q_i^2 \pi_i |A|^3) + (40p|A|N^{\frac{1}{4} + \beta} I)(\sigma^{3/2}q_i |O_i(A,B,x)|))}.$$
Note that 
\begin{align*}
D &\geq \frac{\sigma^3 \tau_i^2 q_i^4 n^4}{500 \sigma^2 N^{2\beta - \frac{1}{4}} q_i^2 \pi_i n^3 + 40\sigma^{5/2}N^{-\frac{1}{4} + 2\beta} n^3 q_i^2}\\
&\geq nN^{\frac{1}{4} - 10 \beta}
\end{align*}
where the last inequality uses (\ref{q and pi bounds, q times pi less than 1}) and (\ref{eqn:sigmaqN}) from Lemma \ref{q and pi bounds}. Since $e^{-D} = N^{-\omega(n)}$ we are done.

\subsubsection{$X_2$}
We now prove the bound on $X_2$. While one might expect that this would be relatively easy to show as it seems unlikely that two edges are picked in step $i$ which then form a $K_4^{-}$ with an edge of $O_{i}(A, b, x)$, this is the most difficult part of the proof. 

We will break up $X_2$ into  four components and handle each one separately. First we introduce some definitions and notation.
The following auxiliary variables  depend on the step of the process $i$, a distinguished vertex $x$, and two vertex subsets $A, B \subseteq [N] \setminus \{x\}$ with $|A|=|B|$.
\begin{equation}
    \label{eqn: z_A,B def}
    \notag
    z_{A,B}(i) := \sigma^4 q_i^2 |A|
\end{equation}
\begin{equation}
    \label{eqn: W_2 def}
    \notag
    W(A,B,x,i) := \{w \in [N] : |N_{\Gamma_{i+1}}(xw) \cap (A \cup B)| \geq z_{A,B}(i)\}
\end{equation}
\begin{equation}
\notag
    \label{eqn: C^2 hat def}
    \hat{C}_{i+1, A, B, x}^2 := \{xuv : \exists w \not \in W(A,B,x,i):  xuw, xvw \in \Gamma_{i+1}\}
\end{equation}
Whenever the context for $A$, $B$, $x$, and $i$ are clear, we will write $z$ for $z_{A,B}(i)$ and  $W$ for $W(A,B,x,i)$. We write $a$ (resp. $b$) for a generic vertex in $A$ (resp. $B$). 
\begin{align*}
    \hat{X_2} &:= |\{xab \in O_i(A,B,x) \cap C_{i+1}^2 : \exists w \not\in W(A,B,x) \text{ and } |\{xwa,xwb,wab\} \cap \Gamma_{i+1}| \geq 2\}|\\
    X_2^{AB} &:= |\{xab \in O_i(A,B,x) \cap C_{i+1}^2 : \exists w \in W(A,B,x) \text{ and } xwa,xwb \in \Gamma_{i+1}\}| \\
    X_2^{A} &:= |\{xab \in O_i(A,B,x) \cap C_{i+1}^2 : \exists w \in W(A,B,x) \text{ and } xwa,wab \in \Gamma_{i+1}\}| \\
    X_2^{B} &:= |\{xab \in O_i(A,B,x) \cap C_{i+1}^2 : \exists w \in W(A,B,x) \text{ and } xwb,wab \in \Gamma_{i+1}\}|.
\end{align*}

Since for $abx \in O_i(A,B,x) \cap C_{i+1}^2$ there must be some $w$ with $|\{xwa,xwb,wab\} \cap \Gamma_{i+1}| \geq 2$, we deduce that $X_2 \leq \hat{X_2} + X_2^{AB} + X_2^A + X_2^B$.

With the aim of applying Theorem \ref{Chernoff variant limited overlap}, we can upper bound $\hat{X_2}$ with 
\begin{equation}
    \label{eqn: hat X_2 upper bound}
    \notag
    \hat{X_2} = \sum_{e \in O_{i}(A,B,x)} \mathbbm{1}_{e \in \hat{C}^2_{i+1,A,B,x}} \leq \sum_{xab \in O_i(A,B,x)} \sum_{w \in V \setminus W(A,B,x,i)} \mathbbm{1}_{|\{axw, bxw, wab\} \cap \Gamma_{i+1}| \geq 2} =: \hat{X_2}^+.
\end{equation}

Now let
\begin{align*}
    L_{AB} &:= \{\{xwa,xwb\} \subseteq O_i: xab \in O_i, w \not \in \{x,a,b\}, a \in A, b \in B\} \\
    L_A &:= \{\{xwa,wab\} \subseteq O_i: xab \in O_i, w \not \in \{x,a,b\}, a \in A, b \in B\} \\   
    L_B &:= \{\{xwb,wab\} \subseteq O_i: xab \in O_i,  w \not \in \{x,a,b\}, a \in A, b \in B\}\\
    L &:= L_{AB} \cup L_A \cup L_B.
\end{align*}

Now with Theorem \ref{Chernoff variant limited overlap} in mind, for each $e \in O_i:=O$, let $\xi_{e}$ be the indicator random variable for $e \in \Gamma_{i+1}$. 
For $\alpha \in L$, let $Y_{\alpha} = \mathbbm{1}_{\xi_{e}=1 \forall e \in \alpha} = \mathbbm{1}_{\alpha \subseteq \Gamma_{i+1}}$. In order to bound $\sum_{\alpha \in L} \mathbb{E}(Y_{\alpha})$, note that if $\{xwa, xwb\} \in L_{AB}$ then $w \in R_{i}(x,ab)$. This shows that 
$$|L_{AB}| \le |O_i(A,B,x)||R_{i}(x,ab)|,$$
since $xab \in O_i(A, B, x)$ and $w \in R_i(x, ab)$ means that $xwa$ and $xwb$ are open.

Similarly, if $\{xwa, wab\} \in L_A$, then $w \in R_{i}(a,xb)$ and if $\{xwb, wab\} \in L_B$ then $w \in R_i(b,xa)$. Thus $$|L| \leq |O_i(A,B,x)|(|R_{i}(x,ab)| + |R_{i}(a,xb)| + |R_{i}(b,xa)|).$$ Since $|R_{i}(u_1,u_2u_3)| \leq q_i^2 N$ 
by (\ref{eqn: event P_i}) due to $\mathcal{P}_i$, and $q_i \leq 1$, we can give the upper bound
\begin{equation}
    \label{eqn: hat X_2 sum Y alpha bound}
    \notag
    \sum_{\alpha \in L}\mathbb{E}(Y_{\alpha}) \leq p^2|O_{i}(A,B,x)| \left(3\max \{|R_{i}(x,ab)|\}\right) \leq 3\sigma^2 q_i |O_i(A,B,x)| =: \mu.
\end{equation}
Now we define $K \subseteq L$ as follows:
\begin{align*}
    K_{AB} &:= \{\{xwa,xwb\} \in L_{AB} : w \not\in W(A,B,x,i) \text{ and } xwa, xwb \in \Gamma_{i+1}\} \\
    K_{A} &:= \{\{xwa,wab\} \in L_A : w \not\in W(A,B,x,i) \text{ and } xwa,wab \in \Gamma_{i+1}\} \\
    K_B &:= \{\{xwb,wab\} \in L_B  : w \not\in W(A,B,x,i) \text{ and } xwb, wab \in \Gamma_{i+1}\} \\
    K &:= K_{AB} \cup K_{A} \cup K_{B}.
\end{align*}

Note that $\hat{X}_2^+  = \sum_{\alpha \in K} Y_{\alpha}$. Next, recalling from Theorem \ref{Chernoff variant limited overlap} that $f(J) := \max_{\beta \in J} |\alpha \in J : \alpha \cap \beta \neq \emptyset|$, we claim that $f(K) \le 8z$. Then since in Theorem \ref{Chernoff variant limited overlap}, $Z_{8z}$ is defined as the maximum over $J \subseteq L$ with $f(J) \leq 8z$ of $\sum_{\alpha \in J} Y_{\alpha}$, if $f(K) \leq 8z$ then $\hat{X}_2^+ \leq Z_{8z}$. Finally, we apply Theorem~\ref{Chernoff variant limited overlap} to conclude using $t=\mu$ that 
$$ \mathbb{P}(\hat{X_2} \geq 6\sigma^2 q_i |O_i(A,B,x)|) \le \mathbb{P}(Z_{8z}\ge 2\mu)\le \exp(-\mu^2/(32z\mu))= \exp(-\mu/(32z)).$$
We now provide the details of these assertions.

We now show that $f(K) \leq 8z$. Let $\beta \in K$ and let $e \in \beta$.  First, if $x\not\in e$ notice that there are at most $3$ elements $\alpha \in K$ such that $e \in \alpha$. Further, if $x \in e$, then the number of elements $\alpha \in K$ such that $e \in \alpha$ is bounded by $|N_{\Gamma_{i+1}}(xw) \cap (A \cup B)|$, and since for all $\beta \in K$, $w \not\in W(A,B,x,i)$, we obtain $|N_{\Gamma_{i+1}(xw)} \cap (A \cup B)| < z$. Thus for all $\beta \in K$
 \begin{equation}
 \notag
  |\alpha \in K : \alpha \cap \beta \neq \emptyset| \leq \sum_{e \in \beta} |\alpha \in K : e \in \alpha| \leq \sum_{e \in \beta} 3 + \sum_{v \in e \setminus W(A,B,x,i)} |N_{\Gamma_{i+1}}(xv) \cap (A \cup B)| \leq 8z.
\end{equation}

Now using Theorem \ref{Chernoff variant limited overlap} since $\hat{X_2}^+ \leq \sum_{\alpha \in K} Y_{\alpha} \leq Z_{8z}$, set $\mu = 3\sigma^2 q_i |O_i(A,B,x)|$, and $C = 8z$. Using the fact that $|O_i(A,B,x)| \geq \tau_iq_in^2$ by (\ref{eqn: event Q_i def}) since we are in $\mathcal{G}_{\leq i}$, we obtain 
\begin{align*}
    \label{eqn: hat X_2 probability concentration}
    \mathbb{P}(\hat{X_2} \geq 6\sigma^2 q_i |O_i(A,B,x)|) &\leq \mathbb{P}(Z_{8z} \geq 2\mu) \\
    &\leq \exp \left(-\frac{\mu^2}{32\mu z}\right) \\
    & \leq \exp \left(-\frac{3\tau_i q_i^2 \sigma^2 n^2}{32\sigma^4 q_i^2 n} \right) \leq N^{-\omega(n)}.
\end{align*}

Next we prove the bound for $X_2^{AB}$. By definition
\begin{equation}
\notag
    X_2^{AB} \leq \sum_{w \in W(A,B,x,i)} |O_i(N_{\Gamma_{i+1}}(wx) \cap A, N_{\Gamma_{i+1}}(wx) \cap B,x)|.
\end{equation}

If $$\min\{|N_{\Gamma_{i+1}}(wx) \cap A|, |N_{\Gamma_{i+1}(wx)} \cap B|\} \geq z=\sigma^4q_i^2|A| \ge n\sigma^2 q_I^2= s,$$
then we use the fact that $\mathcal{Q}^+_{i}$ holds and obtain
 $$O_i(N_{\Gamma_{i+1}}(wx) \cap A, N_{\Gamma_{i+1}(wx)} \cap B, x) \leq q_i|N_{\Gamma_{i+1}}(wx) \cap A||N_{\Gamma_{i+1}(wx)} \cap B|.$$
 Otherwise  one of these terms is less than $z$. Writing
$$M:=\max\{|N_{\Gamma_{i+1}}(wx) \cap A|, |N_{\Gamma_{i+1}}(wx) \cap B|\}, $$
this gives
$$  |O_i(N_{\Gamma_{i+1}}(wx \cap A), N_{\Gamma_{i+1}}(wx) \cap B,x)|   \leq  z M.$$
Putting both bounds together we obtain
\begin{equation}
\label{eqn: Q event X_AB auxillary}
\begin{split}
   |O_i(N_{\Gamma_{i+1}}(wx \cap A), N_{\Gamma_{i+1}}(wx) \cap B,x)| 
  &\leq q_i|N_{\Gamma_{i+1}}(wx) \cap A| |N_{\Gamma_{i+1}}(wx) \cap B| + z M\\
    &\leq \left(q_i|N_{\Gamma_{i+1}}(wx)| + z \right) |N_{E_i \cup \Gamma_{i+1}}(wx) \cap (A \cup B)|.
\end{split}
\end{equation}
We will now use the following Lemma from~\cite{GW} which bounds the sum of the sizes of a collection of sets whose intersection is bounded. We will provide a proof as well for completeness.

\begin{lemma}
    \label{Intersecting Sets Lemma}
    Let $(U_i)_{i \in I}$ be a family of subsets $U_i \subseteq U$, and suppose $|U_i| \geq z \ge  \sqrt{4|U|y}$ and $|U_i \cap U_j| \leq y$ for $i \neq j$. Then $|I| \leq \frac{2|U|}{z}$ and $\sum_{i \in I} |U_i| \leq 2|U|$.
\end{lemma}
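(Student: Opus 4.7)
The plan is to prove both bounds together by a double counting / Cauchy--Schwarz argument on the degree sequence $d(x) := |\{i \in I : x \in U_i\}|$ for $x \in U$.

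First I would observe the basic identities
\begin{equation}
\notag
S := \sum_{i \in I} |U_i| = \sum_{x \in U} d(x), \qquad \sum_{x \in U} d(x)^2 = \sum_{i,j \in I} |U_i \cap U_j|.
\end{equation}
Splitting the diagonal from the off-diagonal terms and applying the hypothesis $|U_i \cap U_j| \leq y$ for $i \neq j$ yields
\begin{equation}
\notag
\sum_{x \in U} d(x)^2 \leq S + |I|(|I| - 1) y \leq S + |I|^2 y.
\end{equation}

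Next I would apply Cauchy--Schwarz in the form $S^2 = \bigl(\sum_{x \in U} d(x)\bigr)^2 \leq |U| \sum_{x \in U} d(x)^2$, and use $|I| \leq S/z$ (which follows from $|U_i| \geq z$) to estimate the right-hand side:
\begin{equation}
\notag
S^2 \leq |U| \bigl(S + |I|^2 y\bigr) \leq |U| S + |U| S^2 y / z^2.
\end{equation}
Because of the standing assumption $z \geq \sqrt{4|U| y}$, i.e. $|U| y / z^2 \leq 1/4$, this simplifies to $S^2 \leq |U| S + S^2 / 4$, hence $(3/4) S \leq |U|$, so $S \leq (4/3)|U| \leq 2|U|$. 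Finally, $|I| \leq S/z \leq (4/3)|U|/z \leq 2|U|/z$, completing both conclusions.

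I do not expect any serious obstacle here. The only subtle point is noticing that one can avoid the weaker inclusion--exclusion bound $|U| \geq |I| z - \binom{|I|}{2} y$ (which does not by itself yield a clean bound of the right order when $|I| y$ is comparable to $z$), by instead averaging via Cauchy--Schwarz on $\sum_x d(x)^2$; the assumption $z^2 \geq 4|U| y$ is tailored precisely so that the quadratic-in-$S$ term can be absorbed into the left-hand side.
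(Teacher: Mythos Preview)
Your proof is correct and genuinely different from the paper's. The paper argues by contradiction for the bound on $|I|$ first: assuming $|I| > 2|U|/z$, it selects a subfamily $J$ of size $\lfloor 2|U|/z\rfloor+1$, observes that for each $i\in J$ the total overlap $\sum_{j\in J,\,j\ne i}|U_i\cap U_j|\le (|J|-1)y\le 2|U|y/z\le z/2\le |U_i|/2$, and then uses the crude inclusion--exclusion $|U|\ge |\cup_{i\in J}U_i|\ge \sum_{i\in J}|U_i|/2\ge |J|z/2>|U|$; with $|I|\le 2|U|/z$ in hand, the same overlap estimate applied to all of $I$ yields $\sum_i|U_i|\le 2|U|$. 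Your route via the degree sequence and Cauchy--Schwarz is slicker, avoids the contradiction step, proves the sum bound directly, and in fact gives the sharper constant $4/3$ in both conclusions. The paper's approach, on the other hand, is completely elementary and makes transparent why the threshold $z^2\ge 4|U|y$ arises: it is exactly what forces the total pairwise overlap at each $i$ to stay below $|U_i|/2$. Either argument is fine for the application.
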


\begin{proof}
    Suppose for contradiction $|I| > \frac{2|U|}{z}$ and let $J \subseteq I$ with $|J| = \lfloor \frac{2|U|}{z}\rfloor + 1$. Note that $y \leq \frac{z^2}{4|U|}$ and $\frac{z}{2} \leq \frac{|U_i|}{2}$ for all $i \in I$. Then for any $i \in J$
    \begin{equation}
        \notag
        \sum_{j \in J, j\neq i} |U_i \cap U_j| \leq (|J| - 1)y \leq \frac{2|U|y}{z} \leq \frac{z}{2} \leq \frac{|U_i|}{2}.
    \end{equation}

Then
\begin{equation}
    \notag
    |U| \geq |\cup_{i \in J} U_i|  \geq \sum_{i \in J} \left(|U_i| - \sum_{j \in J: j\neq i} |U_i \cap U_j|\right) \geq \sum_{i \in J} \frac{|U_i|}{2} \geq |J|\frac{z}{2} = \left(\lfloor\frac{2|U|}{z}\rfloor + 1\right)\frac{z}{2} > |U|
\end{equation}
which is a contradiction. Hence $|I| \leq \frac{2|U|}{z}$. Now for any $i \in I$
\begin{equation}
    \notag
    \sum_{j \in I, j \neq i} |U_i \cap U_j| \leq (|I| - 1)y \leq \frac{2|U|y}{z} \leq \frac{z}{2} \leq \frac{|U_i|}{2}
\end{equation}

as before and 
\begin{equation}
    |U| \geq |\cup_{i \in I} U_i| \geq \sum_{i \in I} \left(|U_i| - \sum_{j \in I, j \neq i} |U_i \cap U_j|\right) \geq \sum_{i \in I} \frac{|U_i|}{2}.
\end{equation}
\end{proof}

We will let $W(A,B,x,i) = I$, $A \cup B = U$, $U_w = N_{E_i \cup \Gamma_{i+1}}(wx) \cap (A \cup B)$ in Lemma \ref{Intersecting Sets Lemma}. As $\mathcal{P}_{i+1}$ holds, for all $w_1, w_2 \in W$,
\begin{align*}
    |(N_{E_i \cup \Gamma_{i+1}}(w_{1}x) \cap (A \cup B)) \cap (N_{E_i \cup \Gamma_{i+1}}(w_2x) \cap (A \cup B))| &\leq |N_{E_{i+1}}(w_1x) \cap N_{E_{i+1}}(w_2x)| \\&\leq T_{i+1}(x,w_1w_2) \\
    &\leq I(\log N)^9.
\end{align*}

Let $y := I (\log N)^9$ and note that $z = \sigma^4 q_i^2 n \geq \sqrt{4 (2n)(I (\log N)^9)}$. Also since for all $w \in W(A,B,x,i)$ we have  $|N_{\Gamma_{i+1}}(xw) \cap (A \cup B)| \geq z$, then $|U_w| \geq z$. Thus Lemma \ref{Intersecting Sets Lemma} gives

\begin{equation}
\notag
    \sum_{w \in W(A,B,x,i)} |N_{E_i \cup\Gamma_{i+1}}(wx) \cap (A \cup B)| \leq 4n.
\end{equation}

Since $\mathcal{N}_{i+1}$ holds,  for all $w \in W(A,B,x,i)$, we have  $|N_{\Gamma_{i+1}}(wx)| \leq 2q_i\sigma \sqrt{N}$. Also,  $z = \sigma^4 q_i^2 n \ll 2q_i^2\sigma \sqrt{N}$, so 
$$q_i|N_{\Gamma_{i+1}}(wx)| + z \le 3q_i^2\sigma \sqrt{N}.$$
Putting (\ref{eqn: Q event X_AB auxillary}) and Lemma \ref{Intersecting Sets Lemma} together, we obtain
\begin{equation}
\notag
    X^{AB} \leq  \sum_{w \in W(A, B, x, i)} \left(q_i|N_{\Gamma_{i+1}}(wx)| + z \right) |N_{E_i \cup \Gamma_{i+1}}(wx) \cap (A \cup B)| \le (3q_i^2\sigma \sqrt{N})(4n).
\end{equation}

The last bound we need to prove to bound $X_2$ is a bound on $X^A$ and $X^B$. We will show the bound on $X^A$ and the argument for $X^B$ is symmetric. Since $\mathcal{N}^+$ holds, $|N_{\Gamma_{i+1}}(aw) \cap B| \leq 10p|B|(N^{\frac{1}{4} + \beta})$ for all $a$ and $w$. Thus we can bound $X_2^A$ by
\begin{align*}
    X_2^A &\leq \sum_{w \in W(A,B,x,i)} \sum_{a \in N_{\Gamma_{i+1}}(xw) \cap A} |N_{\Gamma_{i+1}}(aw) \cap B| \\
    &\leq \left(10pnN^{\frac{1}{4} + \beta} \right) \sum_{w \in W(A,B,x,i)} |N_{\Gamma_{i+1}}(wx) \cap A| \\
    &\leq 10C\sigma \sqrt{\log N}N^{\frac{1}{4} + \beta} \sum_{w \in W(A,B,x,i)} |N_{E_{i+1}}(wx) \cap (A \cup B)|.
\end{align*}

Now we apply Lemma \ref{Intersecting Sets Lemma} with $I = W(A,B,x,i)$, $U = A \cup B$, and $U_w = N_{E_{i+1}}(wx) \cap (A \cup B)$. Again, $|U_w| \geq z$ since $w \in W(A,B,x,i)$ and $\Gamma_{i+1} \subseteq E_{i+1}$, and $|U_{w_1} \cap U_{w_2}| \leq |T_{i+1}(x,w_1w_2)| \leq I(\log N)^9 =: y$. Also, $z \geq \sqrt{4 n y}$, so Lemma~\ref{Intersecting Sets Lemma} yields
$$\sum_{w \in W(A,B,x,i)} |N_{E_{i+1}}(wx) \cap (A \cup B)| \le 4n.$$
Altogether,
\begin{equation}
\notag
    X_2^A \leq 10C \sigma \sqrt{\log N} N^{\frac{1}{4} + \beta} (4n).
\end{equation}

Therefore, whp, 
\begin{align*}
X_2 &\leq \hat{X_2} + X_2^{AB} + X_2^A + X_2^B \\
&\leq 6\sigma^2q_i|O_i(A,B,x)| + 3q_i^2\sigma\sqrt{N}(4n) + 2(10C\sigma \sqrt{\log N})N^{\frac{1}{4} + \beta}(4n) \\
&\le 6\sigma^2q_i|O_i(A,B,x)| + 100\sigma q_i^2\sqrt N n.
\end{align*}

\subsubsection{$X_3$}
Finally, we prove the bound on $X_3$. Since $X_3$ is a sum of iid Bernoulli random variables each with probability $p$, 
\begin{equation}
\notag
    \mathbb{E}(X_3) = p|O_i(A,B,x)| = \frac{\sigma}{\sqrt{N}} |O_i(A,B,x)| \ll \sigma^2 q_i |O_i(A,B,x)| =: t,
\end{equation}
where the last inequality comes from Lemma \ref{q and pi bounds} (\ref{eqn:sigmaqN}). Applying Theorem \ref{standard chernoff} together with the bound $|O_i(A,B,x)| \geq \tau_i q_i |A| |B|$ since $\mathcal{G}_{\leq i}$ holds gives
\begin{equation}
\notag
    \mathbb{P}(X_3 > |O_i(A,B,x)|(2\sigma^2 q_i)) \leq \mathbb{P}(X_3 \geq \mathbb{E}(X_3) + t) \leq \exp\left(-\frac{t^2}{4t}\right) \le  \exp\left(-\frac{\sigma^2 q_i^2 \tau_i |A| |B|}{4}\right).
\end{equation}
By   Lemma \ref{q and pi bounds} (\ref{eqn:sigmaqN}), $|A| = |B| = C \sqrt{N \log N}$ and $\tau_i \geq 1 - \frac{\delta}{2}$, we have $\mathbb{P}(X_3 > |O_i(A,B,x)|2\sigma^2 q_i) \leq N^{-\omega(n)}$.

\section{Bound on $\neg \mathcal{T}$}

To complete the proof of Lemma \ref{secondary_lemma}, recall that $B_{i+1}$ is a collection of bad subsets of $\Gamma_{i+1}$ and $H_{i+1}$ is comprised of the edges selected in $\Gamma_{i+1}$ which remain after removing the edges  from $D_{i+1}$, which is a maximal subset of $B_{i+1}$. Rather than conditioning on $\mathcal{G}_{\leq i}$ as we did in previous sections, here we fix some $x \in [N]$ and disjoint sets $A$ and $B$ with $|A| = |B| = n$ and we consider the number of edges of the form $xab$ added over the entire process where $a \in A$ and $b \in B$. We break this quantity up into those added into $E_i$ and those which remain in $H_i$ by defining
\begin{equation}
\notag
    X(A,B,x) := \sum_{i=0}^{I-1} |O_i(A,B,x) \cap \Gamma_{i+1}|
\end{equation}

\begin{equation}
\notag
    Y(A,B,x) := \sum_{i=0}^{I-1} |O_i(A,B,x) \cap \left( \cup_{S \in D_{i+1}} \cup_{e \in S} e \right)|.
\end{equation}

We now define $\mu^+$ and $\mu^-$ in order to bound $X(A,B,x)$ and $Y(A,B,x)$ as

\begin{equation}
    \notag
    \mu^+ := p\sum_{i=0}^{I-1} q_i n^2
\end{equation}

\begin{equation}
    \notag
    \mu^- := p\sum_{i=0}^{I-1} \tau_i q_i n^2.
\end{equation}

Since $H$ only includes edges picked in $E_i$ which were not in $D_i$ for some $i$
\begin{equation}
\label{T event H in [X - Y, X]}
    X(A,B,x) - Y(A,B,x) \leq e_{H,x}(A,B) \leq X(A,B,x).
\end{equation}

Thus if we prove

\begin{equation}
\label{T event X bound goal}
    X(A,B,x) \in \left[\left(1 - \frac{\delta}{2}\right)\mu^-, \left(1 + \frac{\delta}{2} \right)\mu^+\right]
\end{equation}

\begin{equation}
\label{T event Y bound goal}
    Y(A,B,x) \leq \frac{\delta^2 \mu^-}{9}
\end{equation}

then (\ref{T event H in [X - Y, X]}), (\ref{T event X bound goal}), and (\ref{T event Y bound goal}) and $0 < \delta < 1$ will show

\begin{equation}
    \label{e_H mu^- and mu^+ bound}
    \left(1-\frac{\delta}{2} - \frac{\delta^2}{9}\right) p\sum_{i=0}^{I-1} \tau_iq_in^2 \leq e_{H,x}(A,B) \leq \left(1 + \frac{\delta}{2} \right)p\sum_{i=0}^{I-1}q_in^2.
\end{equation}

Observe that  (\ref{eqn: pi def}) implies $$p\sum_{i=0}^{I-1} q_i n^2 =
\frac{n^2}{\sqrt{N}}\sum_{i=0}^{|I|-1}\sigma q_i
=\frac{n^2}{\sqrt{N}}(\pi_I - \sigma).$$

By (\ref{Psi bound}) and (\ref{q and pi bounds pi close to psi}) 
\begin{equation}
\label{Pi combined bound}
\pi_I \in \left[\sqrt{\frac{\log (\sqrt{3}) + \beta \log N}{3}} - \frac{1}{\sqrt{3}} + \sigma, \sqrt{\frac{\log (\sqrt{3}) + \beta \log N}{3}} +\frac{1}{\sqrt{3}} + 2\sigma\right].
\end{equation}

 Since $\rho = \sqrt{\frac{\beta \log N}{3N}}$ we obtain 
 $$e_{H,x}(A,B) \leq \left(1 + \frac{\delta}{2} \right)p\sum_{i=0}^{I-1}q_in^2 \le 
 \left(1 + \frac{\delta}{2} \right)
 \frac{n^2}{\sqrt{N}}(\pi_I - \sigma)
 < (1+\delta) \rho n^2.$$ Similarly since $\tau_I = 1- \frac{\delta}{2}$ and $\mu^- \geq \tau_I\mu^+$ then (\ref{e_H mu^- and mu^+ bound}) gives $e_{H,x}(A,B) \geq (1 - \delta + \frac{5\delta}{36})\mu^+$. Then by a similar argument to the upper bound $e_{H,x}(A,B) \geq (1 - \delta) \rho n^2$.
Hence, showing (\ref{T event X bound goal}) and (\ref{T event Y bound goal}) hold for all $x, A, B$ with probability $1 - N^{-\omega(1)}$ will complete the proof of Lemma \ref{secondary_lemma}.

To bound $X(A,B,x)$, we define
\begin{equation}
\notag
    X^+_{i+1} := \mathbbm{1}_{\mathcal{G}_i} \sum_{e \in O_i(A,B,x)}\mathbbm{1}_{e \in \Gamma_{i+1}}
\end{equation}
and
\begin{equation}
\notag
    X^+ := \sum_{i=0}^{I-1} X^+_{i+1}.
\end{equation}

Next define $Z^+_{i+1} = \text{Bin}\left(q_i n^2, p \right)$ to be independent random variables. As $\mathbb{P}(\neg \mathcal{G}_{i}) = N^{-\omega(1)}$ and we are in the event $\mathcal{G}_{i}$ we have $|O_i(A,B,x)| \leq q_i n^2$. Consequently, $\mathbb{P}(X^+_{i+1} \geq t) \leq \mathbb{P}(Z^+_{i+1} \geq t)$ for all $t$ and for all $i$. Then let $Z^+ = \sum_{i=0}^{I-1} Z^+_{i+1}$ and notice that this is equal in distribution to $\text{Bin}\left(\sum_{i=0}^{I-1} q_i n^2, p \right)$. Also, for all $t$
\begin{equation}
\notag
    \mathbb{P}(X(A,B,x) \geq t \text{ and } \mathcal{G}_{\leq I}) \leq \mathbb{P}(X^+ \geq t) \leq \mathbb{P}(Z^+ \geq t).
\end{equation}
Similarly, defining $Z^-_{i+1} := \text{Bin}\left( \tau_i q_i n^2, p \right)$ and $Z^- := \text{Bin}\left(\sum_{i=0}^{I - 1} \tau_i q_i n^2, p \right)$ gives 
\begin{equation}
\notag
    \mathbb{P}(X(A,B,x) \leq t \text{ and } \mathcal{G}_{\leq I}) \leq \mathbb{P}(Z^- \leq t).
\end{equation}

Let $\mu^+ = \mathbb{E}(Z^+) = p\sum_{i=0}^{I-1}q_i n^2$ and $\mu^- = \mathbb{E}(Z^-) = p\sum_{i=0}^{I-1} \tau_i q_i n^2$. Now we can apply Theorem \ref{standard chernoff} to $Z^+$ and $Z^-$ to get

\begin{equation}
\notag
    \mathbb{P}\left(X(A,B,x) \geq (1 + \frac{\delta}{2})\mu^+ \text{ and } \mathcal{G}_{\leq I}\right) \leq \mathbb{P}\left(Z^+ \geq (1+\frac{\delta}{2})\mu^+\right) \leq \exp \left( -\frac{\delta^2 \mu^+}{8}\right)
\end{equation}

\begin{equation}
\notag
    \mathbb{P}\left(X(A,B,x) \leq (1 - \frac{\delta}{2})\mu^- \text{ and } \mathcal{G}_{\leq I}\right) \leq \mathbb{P}\left(Z^- \leq (1 - \frac{\delta}{2})\mu^- \right) \leq \exp \left(- \frac{\delta^2\mu^-}{8(1 + \frac{\delta}{2})} \right).
\end{equation}

By (\ref{eqn: pi def}) $$\mu^+ = \frac{n^2}{\sqrt{N}} \sum_{i=0}^{I-1} \sigma q_i = \frac{n^2}{\sqrt{N}}(\pi_{I} - \sigma).$$ Using the bound on $\pi_I$ from (\ref{Pi combined bound}) gives $\mu^+ \geq \frac{C}{\sqrt{3}}\sqrt{\log N} n (\sqrt{\beta\log N} -1)$, and similarly since $\tau_i = 1 - \frac{\delta \pi_i}{2\pi_I} \geq 1 - \frac{\delta}{2}$ and $0 \leq \delta \leq 1$, then $\mu^- \geq \frac{C}{2\sqrt{3}}\sqrt{\log N} n (\sqrt{\beta \log N} - 1)$. Thus 

\begin{equation}
\label{T event X(A,B,x) bound}
\notag
\mathbb{P}\left(X(A,B,x) \not \in [(1-\frac{\delta}{2})\mu^-, (1+\frac{\delta}{2})\mu^+] \text{ and } \mathcal{G}_{\leq I} \right) \leq 2\exp \left(-\frac{C \sqrt{B} \delta^2}{64\sqrt{3}} n\log N  \right).
\end{equation}
By the union bound
\begin{align*}
    &\mathbb{P}\left(\exists x, A, B: X(A,B,x) \not \in \left[\left( 1 - \frac{\delta}{2}\right)\mu^-, \left(1 + \frac{\delta}{2}\right)\mu^+\right] \text{ and } \mathcal{G}_{\leq I}\right) \\
    &\leq 2N^{2n+1}\exp \left(-\frac{C \sqrt{B} \delta^2}{64\sqrt{3}} n\log N  \right) \\
    &\leq 2\exp \left((2n+1)\log N-\frac{C \sqrt{B} \delta^2}{64\sqrt{3}} n\log N  \right) \\
    &\leq N^{-\omega(1)}
\end{align*}

where the last inequality follows if we let $C > \frac{144 \sqrt{3}}{\delta^2 \sqrt{\beta}}$.

Now turning to $Y(A,B,x)$ and recalling that $B_{i+1}$ is the set of subsets of $\Gamma_{i+1}$ which create a $K_4^-$ together with $H_i$ and $D_{i+1}$ is a maximal subset of $B_{i+1}$, we introduce $$\hat{Y}(A,B,x) := \sum_{i=0}^{I-1} |O_i(A,B,x) \cap \left(\cup_{S \in B_{i+1}} \cup_{e \in S} e \right)|.$$

Let 
\begin{align*}
Y_{i+1}(A,B,x) &:= |O_i(A,B,x) \cap \left( \cup_{S \in D_{i+1}} \cup_{e \in S} e\right)|, \\ \hat{Y}_{i+1}(A,B,x) &:= |O_i(A,B,x) \cap \left( \cup_{S \in B_{i+1}} \cup_{e \in S} e\right)|. 
\end{align*}

Let $y := \frac{\delta^2 \mu^-}{9}$. Writing $$Y^I_y=\{{\bf y} =(y_1, \ldots, y_{I}) \in \mathbb N^I: \sum_{i=1}^I y_i=y\},$$  

\begin{equation}
\label{eqn: Y(A,B) setup}
\begin{split}\mathbb{P}(Y(A,B,x) \geq y \text{ and } \mathcal{G}_{\leq I}) &\leq \sum_{{\bf y} \in \mathbb{Y}^I_y} \mathbb{P}(\cap_{i=0}^{I-1}Y_{i+1}(A,B,x) \geq y_{i+1} \text{ and } \mathcal{G}_{\leq i+1})\\
&= \sum_{{\bf y} \in \mathbb{Y}^I_y} \prod_{i=0}^{I-1} \mathbb{P}(Y_{i+1} \geq y_{i+1} | \cap_{j=0}^{i-1} Y_{j+1}(A,B,x) \geq y_{j+1} \text{ and } \mathcal{G}_{\leq j+1}).
\end{split}
\end{equation}

Here we will use the concentration inequality Theorem \ref{maximal subset bound} to bound $Y_{i+1}(A,B,x)$. To this end we define
\begin{align*}
Y_{i+1}^+(A,B,x) &:= \sum_{e \in O_i(A,B,x)} \left(\sum_{\{e',e'' \in O_i : ee'e'' \text { is a } K_4^-\}} \mathbbm{1}_{ee'e'' \in D_{i+1}} + \sum_{\{e' \in O_i, e'' \in E_i : ee'e'' \text{ is a } K_4^- \}}  \mathbbm{1}_{ee' \in D_{i+1}}\right), \\
\hat{Y}_{i+1}^+(A,B,x) &:= \sum_{e \in O_i(A,B,x)} \left(\sum_{\{e',e'' \in O_i : ee'e'' \text { is a } K_4^-\}} \mathbbm{1}_{ee'e'' \in B_{i+1}} + \sum_{\{e' \in O_i, e'' \in E_i : ee'e'' \text{ is a } K_4^- \}}  \mathbbm{1}_{ee' \in B_{i+1}}\right),
\end{align*}
and note that $Y_{i+1}(A,B,x) \leq Y_{i+1}^+(A,B,x)$ and $\hat{Y}_{i+1}(A,B,x) \leq \hat{Y}_{i+1}^+(A,B,x)$.
    
Recall that $B_{i+1}^2$ is the set of pairs of edges $\{e,e'\}$ added to $\Gamma_{i+1}$ where there is a third edge $e'' \in H_i \subseteq E_i$ so $e,e',e''$ comprise a $K_4^-$, and $B^3_{i+1}$ is the set of triples of edges added to $\Gamma_{i+1}$ which make a $K_4^-$. Further, recalling that $B_{i+1} = B^2_{i+1} \cup B^3_{i+1}$, $\hat{Y}_{i+1}^+(A,B,x)$ includes edges from sets in $B_{i+1}^2$ and edges from sets $B_{i+1}^3$. If $xab$ is the edge in $\hat{Y}_{i+1}(A,B,x)$ then the number of $w \in [N]$ which can be the fourth vertex of the $K_4^-$ in the case where one edge from the $K_4^-$ is in $E_i$ is $|S_{i}(x,ab)| + |S_{i}(a,bx)| + |S_{i}(b,ax)| \leq 6q_i\pi_i \sqrt{N}$ assuming $\mathcal{G}_{\leq i}$. Similarly the number of $w$ that can be the fourth vertex from a $K_4^-$ where none of the edges were in $E_i$ is $|R_{i}(x,ab)| + |R_{i}(a,xb)| + |R_{i}(b,ax)| \leq 3q_i^2N$ assuming $\mathcal{G}_{\leq i}$. Hence assuming $\mathcal{G}_{\leq i}$
\begin{equation}
\notag
\begin{split}
    \mathbb{E}(\hat{Y}_{i+1}^+(A,B,x)) &\leq |O_i(A,B,x)|\left(\mathbb{P}\left(xab \in (\cup_{S \in B_{i+1}^2} \cup_{e \in S} e)\right) + \mathbb{P}\left(xab \in (\cup_{S \in B_{i+1}^3} \cup_{e \in S} e) \right) \right)\\
    &\leq q_i n^2 p^2 \left(6q_i\pi_i \sqrt{N}\right) + q_i n^2 p^3 \left(3q_i^2 N \right).
    \end{split}
\end{equation}
 By Lemma \ref{q and pi bounds} we have $\max \{q_i^2, q_i\pi_i\} \leq 1$. Recalling $p= \sigma/\sqrt N$ we obtain
$$\mathbb{E}(\hat{Y}_{i+1}^+(A,B,x)) \leq  10\sigma p q_i n^2 =:\mu_{i+1}.$$ 
Our plan now is to apply  Theorem \ref{maximal subset bound}.
To this end, let $Q= O_i$, $\xi_i = \mathbbm{1}_{e \in \Gamma_{i+1}}$,
\begin{align*}
    J^2 &:= \{(e,e'): e \in O_i(A,B,x), e' \in O_i, \exists e'' \in E_i \text{ s.t. } ee'e'' \text{ is a } K_4^-\} \\
    J^3 &:= \{(e,e',e''): e \in O_i(A,B,x), e' \in O_i, e'' \in O_i, ee'e'' \text{ is a } K_4^-\} \\
    J &:= J^2 \cup J^3.
\end{align*}
For all $\alpha \in J^2$ let $Q(\alpha) = \{e,e'\}$ and for $\alpha \in J^3$ let $Q(\alpha) = \{e,e',e''\}$. Now $X_0 = Y_{i+1}^+(A,B,x)$ and $X = \hat{Y}_{i+1}^+(A,B,x)$ so by Theorem \ref{maximal subset bound}
\begin{equation}
\notag
    \mathbb{P} \left(Y_{i+1}(A,B,x) \geq y_{i+1} \right) \leq \mathbb{P}(Y_{i+1}^+(A,B,x) \geq y_{i+1}) \leq  \frac{\left(\mathbb{E}(\hat{Y}_{i+1}^+(A,B,x))\right)^{y_{i+1}}}{y_{i+1}!} \leq \left(\frac{\mu_{i+1}e}{y_{i+1}} \right)^{y_{i+1}}
    \end{equation}
where the last inequality comes from Stirling's formula.

Then for all $y_{i+1}$ such that $y_{i+1} \geq \frac{10\mu_{i+1}}{\sqrt{\sigma}}$ and assuming $\mathcal{G}_{\leq I}$ we get 
\begin{equation}
\label{eqn: Y(A,B) large y}
\mathbb{P}(Y_{i+1}(A,B,x) \geq y_{i+1}) \leq \sigma^{\frac{y_{i+1}}{2}}.
\end{equation}

Turning to $y_{i+1} < \frac{10\mu_{i+1}}{\sqrt{\sigma}}$, trivially $\mathbb{P}(Y_{i+1} \geq y_{i+1}) \leq 1$. Then using $\mu^- = p\sum_{i=0}^{I-1} \tau_i q_i n^2$ and $\mu_{i+1} = 10 \sigma p q_i n^2$ and $\tau_i \geq \frac{1}{2}$, we obtain 
\begin{equation}
\label{eqn: Y(A,B, small y)}
\sum_{0 < i < I: y_{i+1} < \frac{10\mu_{i+1}}{\sqrt{\sigma}}} y_{i+1} \leq \frac{10}{\sqrt{\sigma}}\sum_{i=0}^{I-1} \mu_{i+1} \leq \frac{10}{\sqrt{\sigma}}20\sigma \mu^- \ll y.
\end{equation}

Recalling the notation $Y^I_y=\{{\bf y} =(y_1, \ldots, y_{I}) \in \mathbb N^I: \sum y_i=y\}$, and applying (\ref{eqn: Y(A,B) setup}), (\ref{eqn: Y(A,B) large y}), and (\ref{eqn: Y(A,B, small y)}),

\begin{align*}
    \mathbb{P}(Y(A,B,x) \geq y \text{ and } \mathcal{G}_{\leq I}) &\leq \sum_{{\bf y} \in Y^I_y} \,\, \prod_{i:y_{i+1} \geq \frac{10\mu_{i+1}}{\sqrt \sigma}} \mathbb{P}(Y_{i+1} \geq y_{i+1} | \cap_{j=0}^{i-1} Y_{j+1} \geq y_{j+1} \text{ and } \mathcal{G}_{\leq j+1}) \\
    &\leq \sum_{{\bf y} \in Y^I_y} \sigma^{\frac{y}{2} - o(y)}\\
    &\leq (y+2)^I \sigma^{\frac{y}{4}} \\
    &\leq (2y)^{N^{\beta}} (\log N)^{-\frac{y}{2}} \leq (\log N)^{\frac{-\delta^2 \mu^-}{36}}
\end{align*}
where the last inequality follows from $(\log N)^y \gg y^{I}$ since $y = \Theta(\mu^-)$ and $\mu^- \geq \frac{C \sqrt{\beta}}{2\sqrt{3}} (\log N) n \gg I$. By the union bound
\begin{align*}
    &\mathbb{P}(\exists x, A, B: Y(A,B,x) \geq y \text{ and } \mathcal{G}_{\leq I}) \\
    &\leq N^{2n+1} (\log N)^{\frac{-\delta^2 \mu^-}{36}} \\
    &\leq N^{2n+1}(\log N)^{-\frac{\delta^2 C \sqrt{\beta}}{72\sqrt{3}} (\log N) n}  \\
    &\leq \exp\left((2n+1)\log N-\frac{\delta^2 C \sqrt{\beta}}{72\sqrt{3}} (\log N \log\log N) n\right) \\
    &\leq N^{-\omega(1)}.
    \end{align*}

\section{Appendix}

\subsection{Proof of Lemma \ref{q and pi bounds}}
\begin{enumerate}
    \item To prove (\ref{Psi bound}) first we will prove the following fact for $x \geq e$
    
    \begin{equation} \label{eqn:xineq} \int_0^{\sqrt{\log x} - 1} e^{t^2}dt \leq x \leq \int_0^{\sqrt{\log x} + 1} e^{t^2}dt
    \end{equation}

For the upper bound, notice that the rectangle in $\mathbb{R}^2$ defined by $$R := \{(t,y) | \sqrt{\log x} \leq t \leq \sqrt{\log x} + 1 \text{ , } 0 \leq y \leq x\}$$ clearly has an area of $x$ and falls under the curve $y = e^{t^2}$. Then since for $e^{t^2} \geq 0$ the area of $R$ is no more than $\int_0^{\sqrt{\log x} + 1} e^{t^2}dt$, proving the upper bound.

For the lower bound, for all integers $0 \leq i \leq \sqrt{\log x} - 1$ define the regions $$R_i := \{(t,y) : i-1 \leq t \leq i, 0 \leq y \leq e^{i^2}\}$$ Now let $$S = \cup_{i=0}^{\sqrt{\log x} - 1} R_i$$
and notice that the area of $S$ is at least $\int_{0}^{\sqrt{\log x} - 1} e^{t^2}dt$. Also, the area of $$\text{Area}(S) = \sum_{i=0}^{\sqrt{\log x} - 1} e^{i^2} \leq (\sqrt{\log x} - 1) e^{(\sqrt{\log x} - 1)^2} = x \frac{e(\sqrt{\log x} - 1)}{e^{\sqrt{\log x}}} \leq x$$
where the last inequality holds since $x \geq e$. This completes the lower bound and proves (\ref{eqn:xineq}).

    Recall that $x = \int_0^{\Psi(x)} e^{3t^2}dt$ by the definition of $\Psi(x)$. Since for $t \geq 0$ we have  $e^{3t^2} \geq 0$, and  for $a,b \geq 0$ we have $a \leq b$ iff $\int_0^a e^{3t^2}dt \leq \int_0^b e^{3t^2}dt$. Note that
    $$\int_0^{\sqrt{\frac{\log(\sqrt{3}x)}{3}} \pm \frac{1}{\sqrt{3}}} e^{3t^2}dt = \frac{1}{\sqrt{3}}\int_0^{\sqrt{\log (\sqrt{3}x)} \pm 1} e^{u^2}du$$
    Using (\ref{eqn:xineq}), we obtain
    \begin{equation}
    \label{eqnn:: integral upper and lower bound}
    \int_0^{\sqrt{\frac{\log(\sqrt{3}x)}{3}} - \frac{1}{\sqrt{3}}} e^{3t^2}dt \leq x \leq \int_0^{\sqrt{\frac{\log(\sqrt{3}x)}{3}} + \frac{1}{\sqrt{3}}} e^{3t^2}dt
    \end{equation}
    The bounds on $\Psi(x)$ follow from (\ref{eqnn:: integral upper and lower bound}).

    \item For (\ref{q less than 1}) note $\Psi(x)$ is strictly increasing for $x \geq 0$ and $\Psi'(x)$ is strictly decreasing for $x \geq 0$ since $\Psi'(x) = \exp(-3\Psi^2(x))$. Together with $\Psi'(0) =1$ this gives that $q_i \leq 1$ for all $i$, and notice that since $q_i = \exp (-3\Psi^2(i\sigma))$ then we also get that $q_i \geq 0$.

    \item For (\ref{q and pi bounds pi close to psi}) the first part follows trivially from the definition of $\pi_i$.

    For the second part of this statement, note that since $\Psi(x)$ is strictly increasing and $\Psi'(x)$ is strictly decreasing for $x \geq 0$. Therefore, for all $j < i$,  $$\sigma \Psi'(j\sigma) \geq \Psi((j+1)\sigma) - \Psi(j \sigma) \geq \sigma \Psi'((j+1)\sigma)$$
    Then notice that $\Psi(i \sigma) = \sum_{j=0}^{i-1} (\Psi((j+1)\sigma) - \Psi(j \sigma))$ since $\Psi(0) = 0$, and also recall that $\pi_i = \sigma + \sum_{j=0}^{i-1} \sigma q_j = \sigma + \sum_{j=0}^{i-1} \sigma \Psi'(j\sigma)$. Thus we get 
    \begin{align*}\pi_i - \Psi(i\sigma) &= \left(\sigma + \sum_{j=0}^{i-1} \sigma\Psi'(j\sigma) \right) - \Psi(i\sigma) \\
        &\geq \sigma  + \left(\sum_{j=0}^{i-1} \Psi((j+1)\sigma) - \Psi(j\sigma)\right) - \left(\sum_{j=0}^{i-1} \Psi((j+1)\sigma) - \Psi(j\sigma) \right) = \sigma.\end{align*}
    Similarly
    $$\pi_i - \Psi(i\sigma) = \left(\sigma + \sigma\Psi'(0) - \sigma\Psi'(i\sigma) + \sum_{j=0}^{i-1}\sigma\Psi'((j+1)\sigma) \right) - \Psi(i\sigma) \leq 2\sigma$$
    where the last inequality follows since $\sigma\Psi'(0) = \sigma$ and $\Psi'(i\sigma) \geq 0$.
    
    \item Turning to (\ref{sqrt sigma pi < 1}), by the definition of $\pi_i$ we have that $\pi_i \leq \pi_I$ for all $0 \leq i \leq I$. Then $$\pi_i \leq \sqrt{\frac{\log (\sqrt{3} (I\sigma))}{3}} + 1 \leq \sqrt{\frac{\beta \log N + 1}{3}} \ll \log N = \sigma^{-1/2}.$$

    \item Next for (\ref{q and pi bounds, q times pi less than 1}) we start by noting that for all $x > 0$, $e^{-3x^2}x^2 \leq \frac{1}{2}$ and $e^{-3x^2}x \leq \frac{1}{2}$. Indeed, notice that for $x > 1$ $3x^2 \geq 2\ln x + \ln \frac{1}{2}$, for $\frac{1}{2} \leq x \leq 1$, $-3x^2 \leq \ln \frac{1}{2}$ and $\ln x \leq 0$, and for $0 < x < \frac{1}{2}$, $\ln x \leq \ln \frac{1}{2}$ and $-3x^2 \leq 0$. Then applying this inequality with $x = \Psi(i\sigma)$ together with (\ref{q and pi bounds pi close to psi}) gives
    $$q_i\pi_i^2 \leq e^{-3\Psi^2(i\sigma)}\left(\Psi(i\sigma) + 2\sigma \right)^2 = e^{-3\Psi^2(i\sigma)} \left(\Psi^2(i\sigma) + 4\Psi(i\sigma)\sigma + 4\sigma^2 \right) \leq 1.$$
    
    \item To prove (\ref{bound on q_i - q_i+1}) we will use the fact that for some $s \in [i\sigma, (i+1) \sigma]$, 
    $$\Psi'((i+1)\sigma) = \Psi'(i \sigma) + \Psi''(i\sigma)(\sigma) + \frac{\Psi'''(s)}{2}(\sigma^2)$$
    From the definition of $\Psi(x)$, it can easily be seen that $\Psi''(i\sigma) = -6(\Psi'(i\sigma))^2 \Psi(i \sigma)$ and $\Psi'''(i \sigma) = -6(\Psi'(i\sigma))^3(1 + 2\Psi^2(i\sigma))$. Recalling  $q_i = \Psi'(i\sigma)$ and $\Psi(i \sigma) \in [\pi_i - 2\sigma, \pi_i + 2\sigma]$, we obtain $$q_{i+1} - q_i \in [-6\sigma q_i^2 (\pi_i + 2\sigma) - 3 \sigma^2 q_i^3(1 + 2(\pi_i + 2\sigma)^2),-6\sigma q_i^2 (\pi_i - 2\sigma) - 3 \sigma^2 q_i^3(1 + 2(\pi_i - 2\sigma)^2)]$$
    Now since $6\sigma q_i^2(2\sigma) + 3\sigma^2 q_i^3(1 + 2(\pi + 2\sigma)^2) \leq 16\sigma^2 q_i^2$, we obtain $|(q_{i+1} - q_i) + 6\sigma q_i^2 \pi_i| \leq 16\sigma^2 q_i^2$.

    \item To prove (\ref{eqn:sigmaqN}) note that since for $x \geq e$ we have that $\sqrt{\frac{\log (\sqrt{3}x)}{3}} - \frac{1}{\sqrt{3}} \leq \Psi(x) \leq \sqrt{\frac{\log(\sqrt{3}x)}{3}} + \frac{1}{\sqrt{3}}$ and $\Psi'(x) = \exp(-3\Psi^2(x))$, $\Psi'(x) = \frac{x^{-1 + o(1)}}{e\sqrt{3}}$. Then using the fact that $I = \lceil N^{\beta} \rceil$ and since $\Psi'(x)$ is strictly decreasing we have  $q_i \geq q_I$ for all $0 \leq i \leq I$, and $q_i \geq \frac{N^{-\beta + o(1)}}{e\sqrt{3}}$.

    \item The inequality (\ref{p hat < q}) follows from $$\hat{p}_{e,i} \leq 1 - (1-p)^{6\sqrt{N}q_i(\pi_i + \sqrt{\sigma})} \leq 6\sigma q_i (\pi_i + \sqrt{\sigma}) \leq q_i.$$

    \item Finally for (\ref{q_i - q_i+1 < sigma min...}), note that since $\Psi$ is increasing and $\Psi'$ is decreasing
    \begin{align*}
        |q_i - q_{i+1}| &\leq \sigma \max_{s \in [i\sigma, (i+1)\sigma]} |\Psi''(s)| \\
        &\leq \max_{s \in [i\sigma, (i+1)\sigma]} 6\sigma\left(\Psi'(s)\right)^2\Psi(s) \\
        &\leq 6\sigma (\Psi'(i\sigma) )^2\Psi((i+1)\sigma) \\
        &\leq 6\sigma q_i^2 (\pi_{i+1} - \sigma) \\
        &\leq 6\sigma q_i^2\pi_i.
    \end{align*}
    Then using $q_i, q_i\pi_i \leq 1$ bounds the bounds $q_i - q_{i+1} < 6\sigma q_i$ and $q_i - q_{i+1} < 6\sigma q_i \pi_i$, and for the last bound, notice these imply that $\frac{q_i}{q_{i+1}} < \frac{1}{1-6\sigma} < 2$, so $q_i < 2q_{i+1}$. Then $q_i - q_{i+1} < 12 \sigma q_{i+1}$.
\end{enumerate}

\end{document}